\documentclass[preprint,12pt]{elsarticle} 
\usepackage{booktabs}
\usepackage{amssymb}
\usepackage[colorlinks]{hyperref}   
\usepackage{xcolor}
\usepackage{amsthm}
\usepackage{mathtools}
\usepackage{enumitem}
\usepackage{graphicx}
\usepackage{subfigure}
\usepackage{autobreak}
\usepackage{natbib}
\usepackage[margin=2cm]{geometry}
\usepackage{multirow}
\usepackage{floatrow}
\usepackage{colortbl}
\usepackage{threeparttable}
\usepackage{subcaption}
\newtheorem{theorem}{Theorem}

\newtheorem{lemma}{Lemma}
\newtheorem{definition}{Definition}

\newtheorem{proposition}{Proposition}
\newtheorem{assumption}{Assumption}
\usepackage{graphicx}
\newfloatcommand{capbtabbox}{table}[][\FBwidth]
\usepackage{multirow}
\usepackage{amsthm,amsmath,amssymb}
\usepackage{mathrsfs}
\usepackage{bm}

\usepackage{amssymb}
\usepackage{algorithm} 
\usepackage{algpseudocode} 
\usepackage{multirow}
\usepackage{makecell}
\usepackage{tabulary}
\usepackage{booktabs}
\usepackage{mathtools}

\setcitestyle{authoryear,open={(},close={)}}
\definecolor{orange}{rgb}{1,0.5,0}
\definecolor{red}{RGB}{198,0,35}
\definecolor{amberseldef}{rgb}{1.0, 0.49, 0.0}
\definecolor{ceruleanblue}{rgb}{0.16, 0.32, 0.75}
\definecolor{amber}{rgb}{1.0, 0.49, 0.0}
\definecolor{dodgerblue}{rgb}{0.12, 0.56, 1.0}
\definecolor{pureblue}{rgb}{0, 0, 1.0}

\definecolor{blue}{rgb}{0.0, 0.28, 0.67}
\newcommand{\blue}[1]{\textcolor{black}{#1}}

\def\hmath$#1${\texorpdfstring{{\rmfamily\textit{#1}}}{#1}}

\makeatletter
\def\ps@pprintTitle{%
   \let\@oddhead\@empty
   \let\@evenhead\@empty
   \let\@oddfoot\@empty
   \let\@evenfoot\@oddfoot
}
\makeatother

\def\scrN{\mathcal{N}}
\def\scrF{\mathcal{F}}
\def\tpo{{t+1}}
\def\tmo{{t-1}}
\def\fullT{{1:T}}

\def\KL{\mathrm{KL}}
\def\sube{_{\epsilon}}
\def\scrE{{\mathbb{E}}}
\def\asto{\xrightarrow{\mathrm{a.s.}}}
\def\etahat{{\hat{\eta}}}
\def\thetahat{{\hat{\theta}}}
\def\rhohat{{\hat{\rho}}}

\def\otild{{\Tilde{o}}}
\def\logit{{\text{logit}}}

\DeclareMathOperator*{\argmin}{arg\,min}

\AtBeginDocument{\hypersetup{citecolor= pureblue,linkcolor = pureblue,urlcolor = dodgerblue}}
\begin{document}


\begin{frontmatter}


\address[1]{Department of Civil and Environmental Engineering, University of Michigan, Ann Arbor, MI, USA}
\address[2]{Department of Industrial and Operations Engineering, University of Michigan, Ann Arbor, MI, USA}
\address[3]{Department of Civil and Environmental Engineering, Duke University, Durham, NC, USA}
\address[4]{Department of Civil Engineering, Stony Brook University, Stony Brook, NY, USA}
\cortext[cor1]{Corresponding author: yafeng@umich.edu }

\author[1,2]{Minghui Wu}
\author[1,2]{Yafeng Yin \texorpdfstring{\corref{cor1}}{}}
\author[3]{Jerome P. Lynch}
\author[4]{Zhichen Liu}

\title{Statistical Inference of Day-to-Day Traffic Dynamics}

\begin{abstract}
Day-to-day traffic dynamics are widely used to model flow evolution due to travelers’ learning and adjustment behavior, yet empirical analysis of these models often relies on descriptive calibration with limited inferential content. This paper develops a statistical inference framework for day-to-day route choice dynamics based on a stochastic individual-level adjustment model. The framework enables uncertainty quantification and formal inference for behavioral parameters from trajectory data. We establish identifiability and consistency under mild conditions, and extend the framework to accommodate demand variation, user heterogeneity through a hierarchical structure, and anonymized observability caused by privacy constraints on trajectory data. Simulation studies demonstrate good finite-sample performance, calibrated uncertainty, and robustness to model misspecification. Empirical analyses of controlled laboratory experiments and real-world trajectory data from Ann Arbor, Michigan, show that the framework can generate novel behavioral insights across settings: it reveals the inadequacy of a purely inter-day learning model once en-route information is introduced, recovers systematic behavioral differences across participant types, and uncovers meaningful day-to-day learning together with substantial demand variation in real-world commuting behavior.
\end{abstract}

\begin{keyword}
Day-to-day dynamics, Bayesian inference, Vehicle trajectory data, Identifiability, User heterogeneity
\end{keyword}

\end{frontmatter}

\section{Introduction}
Day-to-day traffic dynamics are widely used to model flow evolution due to travelers’ learning and adjustment behavior. 
From a system-level perspective, day-to-day traffic dynamics provide valuable insights into how traffic patterns evolve over time \citep{watling2013modelling}. Through theoretical analysis of its convergence and stability, researchers aim to establish a behavioral justification for the concept of user equilibrium \citep{wardrop1952road}, a cornerstone for transportation network analysis and planning. At the individual level, day-to-day route choices reflect travelers’ learning and adaptation behaviors. One example is the Smith dynamics \citep{smith1984stability}, which captures user inertia through gradual route swapping, as travelers shift from higher-cost routes to lower-cost routes. More broadly, the literature has incorporated a range of behavioral mechanisms about how travelers learn, adjust, and respond to congestion over time \citep{horowitz1984stability,guo2011bounded,he2012modeling}.

As illustrated in Figure~\ref{fig:literature}, existing day-to-day models can be broadly divided into two types: deterministic and stochastic. Both are primarily concerned with the {forward} problem, namely, specifying how behavioral parameters govern the day-to-day evolution of traffic states. Using path flow as the system state for illustration, deterministic models treat the state on a given day as a deterministic function of the previous state, thereby representing population-average dynamics \citep{watling1999stability}. Some of these models capture emergent behavior from micro-behavioral rules \citep{horowitz1984stability}, whereas others directly
model aggregate adjustment dynamics \citep{smith1984stability}. Stochastic models, by contrast, explicitly characterize the random realization of travelers' daily choices, so that the state on each day is drawn from a probability distribution \citep{davis1993large, cantarella1995dynamic, hazelton2002day, hazelton2004computation, QI2024100123}.

\begin{figure}[!ht]
    \centering
    \includegraphics[width=0.4\linewidth]{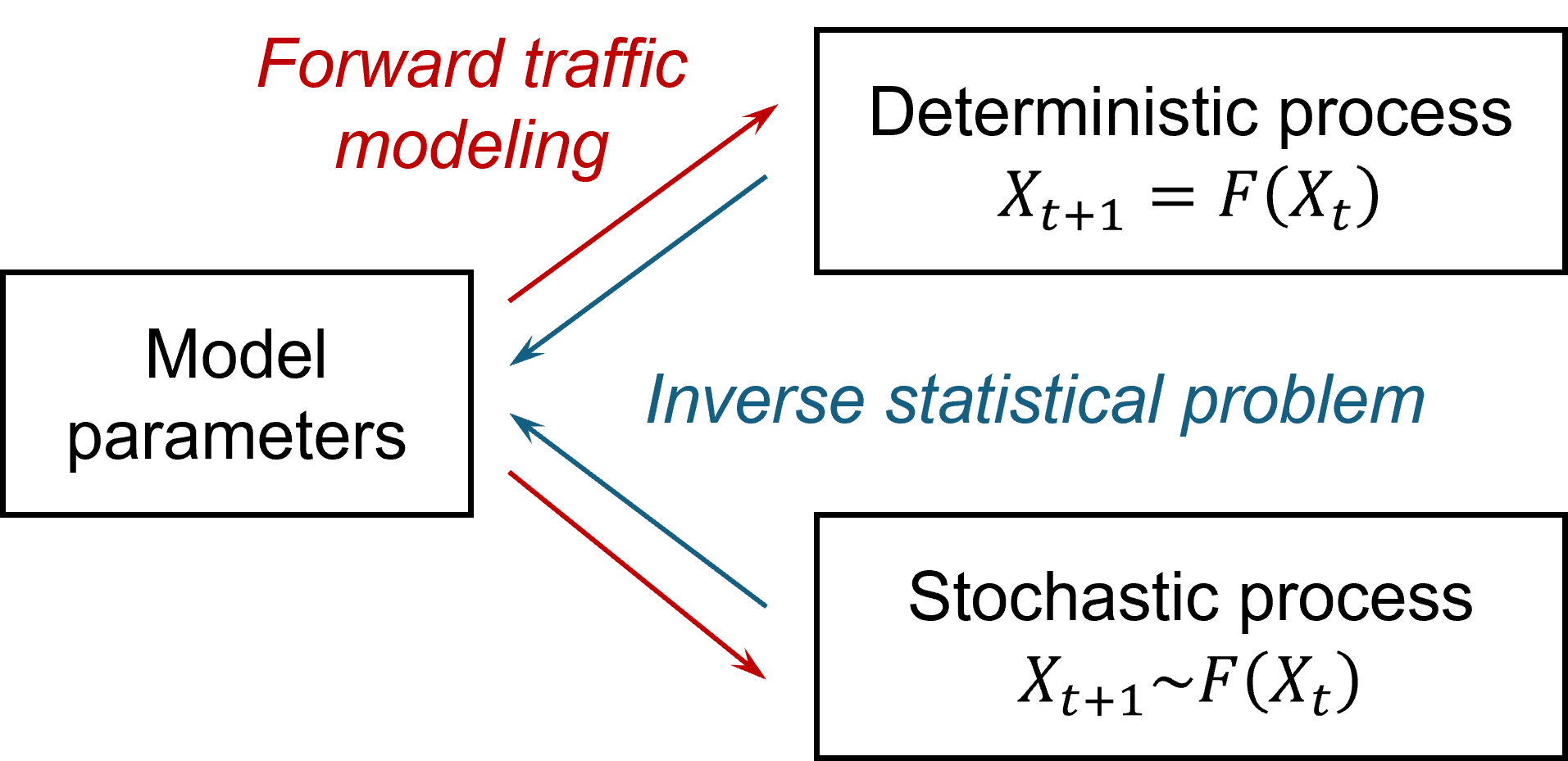}
    \caption{Day-to-day modeling and estimations}
    \label{fig:literature}
\end{figure}

Despite substantial theoretical progress in forward modeling, the inverse problem, recovering behavioral parameters from empirical data, remains a significant and
underexplored challenge. A large body of empirical work has relied on deterministic forward models, reducing estimation to the minimization of a prediction error, \blue{an approach that} essentially functions as \textit{model calibration} \citep{mahmassani1990dynamic, mahmassani2000transferring,
srinivasan2003analyzing, ye2018exploration,qi2023investigating,
guo2011bounded, he2012modeling, cheng2019surrogate}. Although this approach is useful and easy to implement, it has important limitations. \blue{Deterministic models by themselves do not provide the probability distributions or sampling uncertainty required for formal statistical inference. Consequently, differences in calibrated behavioral parameters across populations or conditions cannot, by themselves, be interpreted as genuine differences in underlying adjustment behavior, as they may reflect finite-sample variation. Without an explicit probabilistic framework, there is no principled basis for distinguishing these possibilities. 
}

Stochastic day-to-day models offer a natural remedy by providing the probabilistic foundation needed for \textit{statistical inference} on behavioral parameters. However, only a small number of studies have moved in this direction. Seminal work by \citet{parry2013bayesian, hazelton2016statistical} jointly inferred path flows and behavioral parameters from observed link flows. However, their system state, path flows, have long been considered unobservable and non-identifiable from standard loop detector data \citep{yang2018stochastic}. In such settings, multiple latent path-flow processes, potentially associated with different behavioral parameters, can be observationally equivalent at the link-flow level. This makes the identification of the underlying behavioral parameters difficult to establish. Admittedly, other day-to-day models are formulated using observed link flows as system states \citep{he2010link}. However, this comes at the cost of behavioral interpretability, since individual route choice behavior is no longer modeled explicitly.


Recent advances in connected vehicle and GPS technologies have fundamentally altered this situation. High-resolution trajectory data are now increasingly available, and unlike loop detector counts, they offer direct visibility into individual path choices and even how travelers adjust routes from day to day. This new data regime resolves the long-standing observability bottleneck and provides sufficient structure for the identification of behavioral parameters. At the same time, enhanced observability opens the door to richer behavioral modeling. Travelers naturally exhibit heterogeneous responses to experienced costs: long-term residents with stable travel habits may have low sensitivity to cost differences, whereas newcomers still learning the network may be far more responsive. Trajectory data can allow us to track individuals over time and model this heterogeneity explicitly, moving well beyond the average flow patterns that have constrained much of the prior literature.

To address these opportunities, this paper proposes a statistical inference framework for individual day-to-day route choice behavior. We first introduce an individual-level route adjustment model grounded in established day-to-day dynamics. Building on this model, we develop a Bayesian estimation framework that provides full uncertainty quantification rather than point estimates alone. We theoretically establish model identifiability and prove consistency of the estimation procedure under mild regularity conditions, thereby providing a rigorous foundation for inference.
We further extend the framework to accommodate anonymized observability caused by common data privacy constraints, and to incorporate traveler heterogeneity through a hierarchical structure. For validation, the proposed method is thoroughly examined in a simulated small-scale network to evaluate estimation performance and verify theoretical properties. It is then applied to lab experiment datasets and high-resolution trajectory data from Ann Arbor, Michigan, to generate novel behavioral insights.

Our paper develops a complete inferential framework for the inverse problem of stochastic day-to-day models. Specifically, we make the following contributions:
\begin{itemize}
\item \textit{Rigorous inferential foundation}: We establish identifiability and consistency for a Bayesian framework applied to stochastic day-to-day models, properties that prior calibration-based approaches cannot provide. This clarifies both {what} is being estimated and {whether} reliable estimation is theoretically feasible.
\item \textit{Uncertainty quantification.} By adopting a Bayesian approach, we move beyond point estimation and characterize the full posterior distribution over behavioral parameters, enabling principled statistical inference.
\item \textit{Traveler heterogeneity.} We explicitly model and estimate heterogeneous behavioral parameters across individuals through a hierarchical structure, recovering the population distribution of adjustment behavior rather than a single aggregate value. To the best of our knowledge, this is the first study to do so.
\item \textit{Empirical insights.} We apply the framework to both controlled lab experiments and real-world trajectory data from Ann Arbor, Michigan, generating novel behavioral insights.
\end{itemize}

The remainder of this paper is structured as follows. 
Section~\ref{sec:model} presents the model, our estimation approach, and theoretical results. Section~\ref{sec:extension} extends the model to anonymized observability and incorporates user heterogeneity. Section~\ref{sec:simulation} provides simulation studies on a synthetic network, and Section~\ref{sec:routing} applies the proposed method to analyze behaviors in lab environments and real-world routing scenarios. Finally, Section~\ref{sec:conclusion} concludes the paper. 

\section{Model and Estimation}
To facilitate the presentation of our approach, we begin with a simplified setting in which all commuters are assumed to be homogeneous. This specification is commonly referred to as the pooled model. Section \ref{sec:extension} later extends the framework to a hierarchical model that captures user heterogeneity. As we focus on the commuting problem, we use the terms travelers and commuters interchangeably.

\label{sec:model}
\subsection{Individual Day-to-Day Choice Model}
\label{ssec:individual}

Let $[N]=\left\{1,...,N\right\}$ denote the study group of commuters, a sample from the total demand, where $N$ is the total number of observed commuters. To better present our approach, we first consider a single origin-destination (OD) pair. Let $[M]=\left\{1,...,M\right\}$ denote the set of available paths and $[T]=\left\{1,...,T\right\}$ the time horizon. The travel cost of path $i$ on day $t$ is denoted by $c_t(i)$ and is treated as exogenously given. The daily cost vector is written as $c_t=\left\{c_t(i)\right\}_{i\in[M]}$, and we assume that the cost is always bounded by a finite constant $C$. In practice, demand may also vary over time. To accommodate this feature, we treat non-travel as a virtual alternative, denoted as path 0. 

We use a random variable $X_t^n$ to represent the path chosen by commuter $n$ on day $t$, including the virtual non-travel path, with support $\left\{0,1,...,M\right\}$. To simplify notations, we denote the full choice sequence of commuter $n$ over the time horizon by $X^n_\fullT=[X_1^n,...,X_T^n]$, which is a random vector on $\left\{0,1,...,M\right\}^T$. The collection of choice sequences of all commuters is denoted by $X_{1:T}=[X^1_\fullT, ..., X^N_\fullT]$, defined on $\left\{0,1,...,M\right\}^{T\times N}$.

On each day, each commuter chooses not to travel with a constant probability $\rho\in(0,1)$, which is driven by exogenous needs such as remote work, rather than cost-based elasticity. The resulting choice probabilities are
\begin{equation}
\label{eq:individual_0}
    P(X_t^n=0|\rho)=\rho,
\end{equation}

For physical paths, initially, each commuter is assumed to have limited information about path costs and other travelers’ behavior. We therefore set the initial perceived cost for each physical path to $V_1^n(i) = 0$ for all commuters $n$ and paths $i\in [M]$. Alternatively, the initial valuation could be set to free-flow travel time without affecting the structure of the model. After observing realized travel costs, commuters update their perceived costs using an exponential smoothing rule:
\begin{equation}
\label{eq:individual_1}
    V_{t+1}^n(i)= (1-\eta) V_t^n(i) + \eta c_t(i),  \forall i\in[M], n\in[N],
\end{equation}
where $\eta\in(0,1)$ is the learning rate that captures the sensitivity of each commuter to newly observed travel costs.

Given the perceived costs, each commuter chooses a path according to a multinomial logit model, conditioned on traveling on that day. Specifically, for $i\in[M]$,
\begin{equation}
\label{eq:individual_2}
   P(X_t^n=i | \eta, \theta, \rho, \scrF_{t-1})=(1-\rho) \frac{e^{-\theta V_t^n(i)}}{\sum_{m\in[M]} e^{-\theta V_t^n(m)}},
\end{equation}
where $\theta>0$ is the scale parameter, and $\sigma$-algebra $\scrF_\tmo$ refers to the flow of information over time up to day $t-1$, which essentially consists of the history of the travel costs. 

For notational convenience, we denote the probabilities in Equations~(\ref{eq:individual_0}) and~(\ref{eq:individual_2}) by $p_t^n(i),i\in\left\{0,...,M\right\}$. In the pooled model, all commuters are assumed to share common behavioral parameters and therefore identical choice probabilities, denoted by $p_t(i)$ without the commuter index.

Following the daily choice process, the probability of observing a full choice sequence for a single commuter, $x^n_\fullT\in \left\{0,1,...,M\right\}^T$, is given by
\begin{equation}
\begin{aligned}
    &P(X^n_{1:T}=x_{1:T}^n|\eta, \theta,\rho,c_\fullT)=  \prod_{t=1}^T p_t^n(x_t^n) .
\end{aligned}
\end{equation}

We further assume that commuters make their daily route choices independently, conditional on their perceived costs. Although commuters influence one another through congestion effects embedded in the realized travel costs, conditional independence holds at the decision stage given the perceived valuations. Under this assumption, the probability of observing a full set of choices across all commuters, $x_\fullT\in\left\{0,1,...,M\right\}^{T\times N}$, is
\begin{equation}
\label{eq:singleOD}
\begin{aligned}
    &P(X_\fullT=x_\fullT|\eta, \theta,\rho, c_\fullT) =\prod_{n=1}^N \prod_{t=1}^T p_t^n(x_t^n).
\end{aligned}
\end{equation}

This framework extends naturally to multiple commodities under the assumption that commuters associated with different OD pairs make independent decisions based on their own perceived costs. With a slight abuse of notations, let $N^w$ denote the number of commuters within OD pair $w\in W$, and let $X_{1:T}^{w,n}$ denote the choice sequence of commuter $n$ within OD pair $w$. Then, the joint probability becomes
\begin{equation}
\label{eq:multipleOD}
\begin{aligned}
    &P(X_\fullT=x_\fullT|\eta, \theta,\rho,c_\fullT) \\
    = &\prod_{w\in W} \prod_{n=1}^{N^w} P(X^{w,n}_{1:T}=x_{1:T}^{w,n}|\eta, \theta,\rho,c_\fullT).
\end{aligned}
\end{equation}
Adding OD pairs increases computation complexity in the log-likelihood only linearly. From a computational perspective, multiple OD pairs are equivalent to having more commuters within a single OD pair. As a result, the presence of multiple commodities does not fundamentally alter the structure of the model. For notational simplicity, we therefore focus on the single-OD setting in the remainder of this section.


The proposed model describes route adjustment through two components, an exponential smoothing rule for updating perceived costs and a logit model for translating perceived costs into choice probabilities. 
This structure is grounded in the behavioral foundation of Horowitz dynamics \citep{horowitz1984stability}:
\begin{equation}
    p_\tpo^{w,k}=\eta  c_t^{w,k} + (1-\eta) p_t^{w,k}, \ \forall w\in W, k\in P^w, t>0 \label{eq:horowitz1}
\end{equation}
\begin{equation}
    f^{w,k}_\tpo=D^w \frac{e^{-\theta p_\tpo^{w,k}}}{\sum_{k'\in P^w }e^{-\theta p_\tpo^{w,k'}}} \label{eq:horowitz2}.
\end{equation} 
Here, $p_t^{w,k}$ and $c^{w,k}_t$ refer to the perceived and true cost on path $k\in P^w$ of OD pair $w\in W$ on day $t$, respectively. $f^{w,k}_t$ represents the path flow and $D^w$ refers to the demand. 

Despite sharing similar behavioral components, Horowitz dynamics, as well as most existing day-to-day models, are formulated at the aggregate path-flow level. In addition to allowing for varying demand, a key distinction of our proposed model is complete observability at the individual level: the daily path choices of each studied commuter are directly observed, which is available in high-resolution trajectory data and is also naturally satisfied in laboratory route-choice experiments. By retaining individual choice trajectories rather than only aggregate counts, the proposed formulation provides the foundation needed to infer behavioral parameters and, in Section~\ref{sec:extension}, to accommodate user heterogeneity.

Nonetheless, Proposition~\ref{pp:horowitz} shows that this richer individual-level model remains consistent with the classical aggregate perspective as the number of commuters increases. Similar observations have also been made by \citet{davis1993large} and \citet{watling1999stability}.
Proofs throughout the paper are provided in Appendix \ref{app:proof}.

\begin{proposition}
\label{pp:horowitz}
When all commuters follow Equations~(\ref{eq:individual_0})--(\ref{eq:individual_2}) and $\rho=0$, the expected path flows follow Horowitz dynamics~(\ref{eq:horowitz1})--(\ref{eq:horowitz2}). Moreover, as the number of commuters tends to infinity, the empirical proportion of commuters choosing each path converges to the corresponding proportion implied by Horowitz dynamics.
\end{proposition}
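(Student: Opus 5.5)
The plan rests on one structural fact: with $\rho=0$ and the costs $c_{1:T}$ treated as exogenous, the perceived costs $V_t^n(i)$ evolve deterministically. Take the recursion (\ref{eq:individual_1}) started from the common value $V_1^n(i)=0$. The update involves only $\eta$ and the realized costs $c_t(i)$, and never the commuter's own choices. By induction on $t$, therefore, $V_t^n(i)=V_t(i)$ for every $n$, and each $V_t(i)$ is an $\scrF_{t-1}$-measurable (indeed deterministic given $c_{1:t-1}$) quantity. I will identify $V_t(i)$ with the Horowitz perceived cost $p_t^{w,k}$. The two recursions coincide: (\ref{eq:individual_1}) is exactly (\ref{eq:horowitz1}) with the same initial value. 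Hence $p_t(i)=e^{-\theta V_t(i)}/\sum_m e^{-\theta V_t(m)}$ is common to all commuters.

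For the first claim, define the path flow $F_t(i)=\sum_{n=1}^N \mathbf{1}\{X_t^n=i\}$ and take $D=N$, the number of travelers when $\rho=0$. By linearity of expectation and (\ref{eq:individual_2}),
\begin{equation*}
\mathbb{E}[F_t(i)\mid \scrF_{t-1}] = \sum_{n=1}^N P(X_t^n=i\mid\scrF_{t-1}) = N\,\frac{e^{-\theta V_t(i)}}{\sum_{m\in[M]} e^{-\theta V_t(m)}}.
\end{equation*}
Combined with the identification $V_t=p_t^{w,k}$, this is precisely (\ref{eq:horowitz2}). Since $V_t$ is deterministic given the exogenous costs, the conditional and unconditional expectations agree. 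The expected flows therefore follow Horowitz dynamics (\ref{eq:horowitz1})--(\ref{eq:horowitz2}).

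For the second claim, fix $t$ and $i$. Conditional on $\scrF_{t-1}$, the indicators $\mathbf{1}\{X_t^n=i\}$, $n\in[N]$, are independent Bernoulli variables with the common parameter $p_t(i)$, by the conditional independence assumption behind (\ref{eq:singleOD}). The strong law of large numbers then gives $F_t(i)/N\asto p_t(i)$, which is the Horowitz proportion $f_t^{w,k}/D^w$. A finite union over $i\in[M]$ and $t\in[T]$ preserves almost sure convergence. If uniformity or rates are wanted, Hoeffding's inequality gives $P(|F_t(i)/N-p_t(i)|>\epsilon)\le 2e^{-2N\epsilon^2}$, and Borel--Cantelli yields the same conclusion.

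The main obstacle is the status of the costs. If $c_t$ is taken as exogenous, as the model states, the argument above is complete. If instead the costs are endogenous, meaning $c_t$ is a function of realized flows through congestion, then $V_t$ becomes random and the expected flows only approximately follow Horowitz dynamics. In that case I would argue by induction on $t$. I would assume the link cost functions are continuous in flow proportions. The inductive hypothesis is that $F_s/N$ converges almost surely to the Horowitz proportions for $s<t$. Continuity of the cost functions then makes $c_s$ converge, and hence $V_t$ and $p_t$ converge. The conditional SLLN step, with a bounded-difference or Hoeffding bound applied conditionally, then propagates convergence to day $t$. This recovers the mean-field limit in the spirit of \citet{davis1993large}.
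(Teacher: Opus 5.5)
Your proposal is correct and follows essentially the same route as the paper. The paper likewise notes that the valuation recursion coincides with Horowitz's perceived-cost update, reads off $\mathbb{E}[O_t]=N\,p_t$ from the multinomial form of the daily counts, and applies the strong law of large numbers to $O_t(i)/N$. Your Hoeffding plus Borel--Cantelli remark and your endogenous-cost induction go beyond what the paper proves, but neither is needed under its exogenous-cost assumption.
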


The structure of the model above is summarized in Figure \ref{fig:pool}.
\begin{figure}[!ht]
    \centering
    \includegraphics[width=0.4\linewidth]{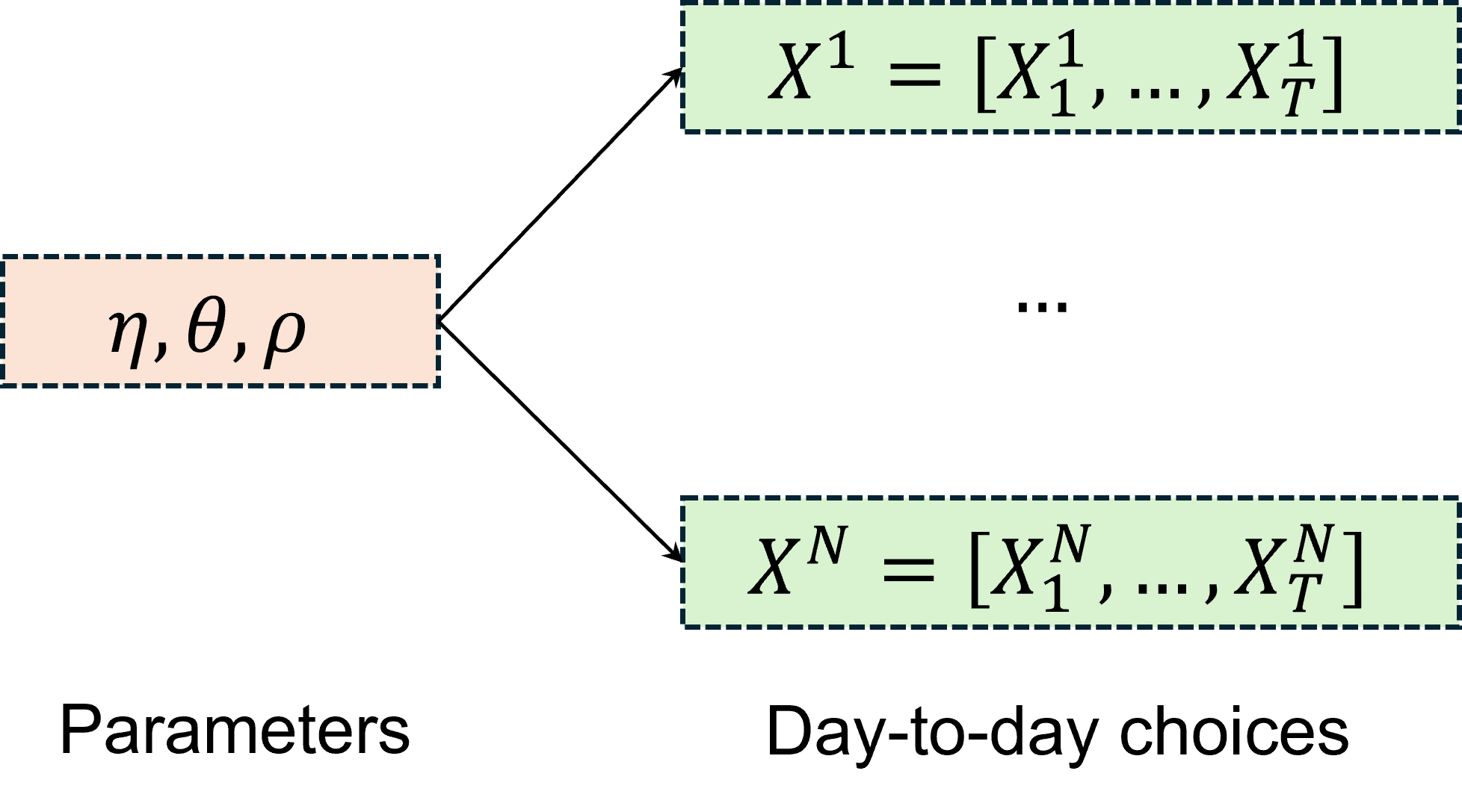}
    \caption{Illustration of the pooled model}
    \label{fig:pool}
\end{figure}

\subsection{Estimation algorithm}\label{ssec:estimation}

We now turn to estimation of the parameters $\eta,\theta,\rho$ after observing a realized set of choice trajectories, $x_\fullT\in \left\{0,1,...,M\right\}^{T\times N}$. 

At first glance, the individual-level formulation appears to induce a state space that grows exponentially with the number of commuters. However, this combinatorial explosion does not enter likelihood evaluation. For estimation, we only need to evaluate the probability of the realized individual trajectories observed in the data, rather than enumerate all possible population states. Because the joint likelihood factorizes across commuters and days under conditional independence, computation scales with the observed data rather than with the size of the full state space.

Specifically, to ensure numerical stability and computational efficiency, we work with the log-probabilities:
\begin{equation}
    \log p_t^n(0)=\log \rho,
\end{equation}
\begin{equation}
    \log p_t^n(i)=\blue{\log(1-\rho)}-\theta V_t^n(i)-\log \sum_{m=1}^M e^{-\theta V_t^n(m)}
\end{equation}
for all $n\in[N], t\in[T],i\in[M]$. The complete-data log-likelihood for the realization is therefore
\begin{equation}
\label{eq:loglikelihood}
\begin{aligned}
    \log L(\eta,\theta,&\rho)  \\
    = \sum_{n=1}^N\sum_{t=1}^T \Bigg(& I_t^n(0) \log\rho+
    \sum_{m=1}^M I_t^n(m) \log p_t^n(m) \Bigg),
\end{aligned}
\end{equation}
where $I_t^n(m)=\textbf{1}\left\{x_t^n=m\right\},m\in\{0,1,...,M\}$ are indicator variables. The resulting likelihood function is differentiable with respect to $(\eta,\theta,\rho)$. 

We adopt a Bayesian inference approach, as it is more robust in settings with limited or noisy data. We assume independent prior distributions for the parameters, so that $p(\eta,\theta,\rho)=p(\eta)p(\theta)p(\rho)$. The
choice of prior families is motivated by the parameter domains. Since $\eta, \rho \in
(0,1)$, we assign logit-normal priors, which place mass on the unit interval while remaining flexible in shape. Since $\theta > 0$, we assign a log-normal prior, which places mass on the positive real lin. Thus, the posterior distribution satisfies
\begin{equation}
    p(\eta,\theta,\rho| x_\fullT, c_\fullT) \propto L(\eta,\theta,\rho) p(\eta)p(\theta)p(\rho).
\end{equation}
In practice, we compute the log-posterior:
\begin{equation}
\label{eq:log-posterior}
\begin{aligned}
    \log p(\eta,&\theta,\rho| x_\fullT, c_\fullT) =\log L(\eta,\theta,\rho) \\
    &+\log p(\eta)+\log p(\theta)+\log p(\rho) +\text{const}.
\end{aligned}
\end{equation}
The Bayes estimator is taken as the posterior mean. For example,
\begin{equation}
    \etahat=\int \eta p(\eta,\theta,\rho|x_\fullT,c_\fullT) d\eta d\theta d\rho,
\end{equation} 
and similarly for $\thetahat$ and $\rhohat$. 

Beyond point estimation, statistical inference requires uncertainty quantification. We report marginal Highest Density Intervals (HDIs) as credible intervals (CIs) for each parameter, which is the narrowest range that can cover certain proportions of the posterior. For a confidence level $\alpha$ (e.g., 95\%), the HDI for $\eta$ is defined as
\begin{equation}
    HDI_\alpha (\eta)=\left\{\eta:p(\eta|x_\fullT,c_\fullT)\geq e_\alpha(\eta) \right\},
\end{equation}
where $p(\eta|x_\fullT,c_\fullT)$ refers to the marginal posterior, and the threshold $e_\alpha(\eta)$ satisfies:
\begin{equation}
    \int_{\left\{\eta: p(\eta|x_\fullT,c_\fullT) \geq e_\alpha(\eta)\right\} }p(\eta|x_\fullT,c_\fullT) d\eta =\alpha.
\end{equation}
Similar for the other two parameters. 

Because the likelihood and posterior do not admit closed-form expressions and must be evaluated recursively through the latent valuation updates, direct posterior evaluation is infeasible. We therefore rely on sampling-based methods that approximate the posterior via empirical distributions. Given posterior draws $\left\{(\eta^{(s)}, \theta^{(s)},\rho^{(s)}\right\}_{s=1}^S$, then
\begin{equation}
    \etahat=\frac{1}{S} \sum_{s=1}^S \eta^{(s)}.
\end{equation}
The $\alpha$-credible interval is
\begin{equation}
    [\eta^{(k)}, \eta^{(k+\lfloor{\alpha S\rfloor})}],
\end{equation}
where 
\begin{equation}
    k = \argmin_{1\leq s\leq S-\lfloor{\alpha S\rfloor}} \left(\eta^{(s+\lfloor{\alpha S\rfloor})}-\eta^{(s)}\right).
\end{equation}
The same procedure applies to the other two parameters.

Because $\theta$ and $V$ interact multiplicatively in the likelihood, the posterior over $(\eta, \theta)$ can exhibit strong correlations: a higher $\theta$ and a lower perceived cost difference can produce similar choice probabilities, creating elongated, curved posterior ridges that standard random-walk Markov chain Monte Carlo (MCMC) traverses inefficiently. We therefore adopt the No-U-Turn Sampler (NUTS) \citep{hoffman2014no}, an adaptive variant
of Hamiltonian Monte Carlo (HMC) \citep{girolami2011riemann}. HMC uses gradient
information to propose distant moves along the posterior geometry, greatly reducing
the autocorrelation of samples compared to random-walk methods, especially in correlated
posteriors. Readers are referred to \citet{hoffman2014no} for details of the algorithm's implementation.


\subsection{Theoretical results}
The estimation algorithm above is well-defined only if the parameters it targets are identifiable and the estimator is consistent. We now establish both properties formally.
\subsubsection{Identifiability}
Identifiability characterizes what parameters can be uniquely learned from the data, independently of the estimation method or sample size.
\begin{definition}[Identifiability]
\label{def:identifiability}
For a given cost sequence $c_\fullT$, the model is identifiable if 
\begin{equation}
\begin{aligned}
   & P(X_\fullT=x_\fullT|\eta, \theta, \rho, c_\fullT) \\
    = & P(X_\fullT=x_\fullT|\eta', \theta',\rho', c_\fullT)
\end{aligned}
\end{equation}
for all $x_\fullT\in \left\{0,1,...,M\right\}^{T\times N}$ 
indicates $\eta=\eta',\theta=\theta',\rho=\rho'$. 
\end{definition}

If identifiability fails, multiple distinct parameter values generate the same likelihood functions. Hence, no amount of data or estimation methods can tell them apart. In such cases, estimation becomes fundamentally ill-posed. We establish identifiability under a rather weak condition on the cost sequence.
\begin{assumption}[Dynamic richness]
\label{ass:rich}
\blue{Suppose $T\geq 3$. There exist two distinct routes $i,j\in[M]$ and a day $t\in\{1,\ldots,T-2\}$ such that $ \Delta c_t(i,j)\neq \Delta V_1(i,j) $}
, where $\Delta c_t(i,j)=c_t(i)-c_t(j)$ and $\Delta V_1(i,j)=V_1(i)-V_1(j)$.    
\end{assumption}

\begin{theorem}
\label{thm:basic}
\blue{Under Assumption~\ref{ass:rich}, the model with parameter $\eta,\theta,\rho$ is identifiable.}
\end{theorem}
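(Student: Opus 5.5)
Equality of the joint laws for all realizations forces equality of every single-day marginal choice probability $p_t(i)$. For each $t$, summing over the other coordinates (or taking $N=1$ trajectories, since the joint factorizes across commuters and days with identical pooled probabilities) gives $p_t(i;\eta,\theta,\rho)=p_t(i;\eta',\theta',\rho')$ for all $t\in[T]$, $i\in\{0,\dots,M\}$. The plan is to extract $\rho$ first, then the products $\theta\times$(perceived cost differences), and finally separate $\eta$ from $\theta$ using the exponential-smoothing recursion on two consecutive updates.

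\textbf{Step 1 ($\rho$).} From Equation~(\ref{eq:individual_0}), $p_t(0)=\rho$, so $\rho=\rho'$. Since $1-\rho>0$, dividing Equation~(\ref{eq:individual_2}) by $1-\rho$ and taking log-ratios for the routes $i,j$ of Assumption~\ref{ass:rich} gives, for every day $s$,
\begin{equation*}
\theta\,\Delta V_s(i,j;\eta)=\theta'\,\Delta V_s(i,j;\eta'),
\end{equation*}
where $\Delta V_s(i,j;\eta)=V_s(i)-V_s(j)$ under learning rate $\eta$.

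\textbf{Step 2 (unrolling the recursion).} Write $D_s=\Delta V_s(i,j)$ and $d_s=\Delta c_s(i,j)$. Equation~(\ref{eq:individual_1}) is linear, so $D_{s+1}-D_s=\eta(d_s-D_s)$. Let $t$ be the day in Assumption~\ref{ass:rich}. I would first handle the case in which $t$ is the first day with $d_t\neq D_1$. Then $d_s=D_1$ for $s<t$, so by induction $D_s=D_1$ for all $s\le t$ regardless of $\eta$, and $D_{t+1}-D_t=\eta(d_t-D_1)\neq 0$. Step~1 at day $t$ gives $\theta D_1=\theta' D_1$. At day $t+1$, subtracting the day-$t$ identity gives $\theta\eta(d_t-D_1)=\theta'\eta'(d_t-D_1)$, hence $\theta\eta=\theta'\eta'$.

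One more equation is needed, and day $t+2\le T$ (guaranteed by $t\le T-2$) supplies it:
\begin{equation*}
D_{t+2}-D_{t+1}=\eta\bigl(d_{t+1}-D_{t+1}\bigr)=\eta\bigl(d_{t+1}-D_1\bigr)-\eta^2\bigl(d_t-D_1\bigr).
\end{equation*}
Multiplying by $\theta$ and matching gives $\theta\eta(d_{t+1}-D_1)-\theta\eta^2(d_t-D_1)=\theta'\eta'(d_{t+1}-D_1)-\theta'\eta'^2(d_t-D_1)$. Using $\theta\eta=\theta'\eta'$, the first terms cancel, leaving $\theta\eta\cdot\eta=\theta\eta\cdot\eta'$. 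Since $\theta\eta>0$ and $d_t\neq D_1$, this yields $\eta=\eta'$, and then $\theta=\theta'$. This is why $T\ge3$ and $t\le T-2$ appear in the assumption.

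\textbf{Main obstacle.} The delicate point is justifying the reduction to the first deviating day. If an earlier day $s<t$ already has $d_s\neq D_1$, the argument simply runs from that earliest day $s^\ast$. Since $s^\ast<t\le T-2$, day $s^\ast+2$ is still observed. So the only real requirement is that some deviation occurs by day $T-2$, which is exactly Assumption~\ref{ass:rich}. I would therefore present the proof with $t$ redefined as the minimal such day. The remaining care is purely bookkeeping: checking that $D_s$ is $\eta$-independent before that day, and that the positivity of $\theta$, $\eta$, and $1-\rho$ legitimizes the divisions.
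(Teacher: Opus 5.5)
Your proof is correct and follows essentially the same route as the paper: you recover $\rho$ from the non-travel probability, match the log-odds $\theta\Delta V_t=\theta'\Delta V_t'$, take the first informative day $K\le T-2$, and use days $K+1$ and $K+2$ to get first $\theta\eta=\theta'\eta'$ and then $\eta=\eta'$. The differences are minor: differencing consecutive-day identities lets you skip the paper's case split on whether $\Delta V_1$ vanishes (the paper gets $\theta=\theta'$ directly from day $1$ when some $\Delta V_1(a,b)\neq 0$), and you marginalize the joint law directly rather than invoking the paper's single-commuter reduction lemma.
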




\subsubsection{Consistency}

Identifiability is a property of the model and data-generating process: it ensures that the parameter-to-likelihood mapping is injective. Consistency is a property of the estimator: it guarantees convergence to the true parameter values as more data becomes available.

\begin{definition}[Consistency]
Let $\phi\in \Theta$ denote the model parameters with true value $\phi_0$. 
\blue{The estimator is consistent if 
$\Vert \hat{\phi}_T-\phi_0\Vert \asto 0$ as $T\to\infty$, where $\hat{\phi}_T$ is the posterior-mean estimator based on an observation horizon of length $T$.}

\end{definition}

Identifiability alone does not guarantee consistency. For example, if the cost sequence satisfies Assumption \ref{ass:rich} only over a finite initial period and becomes constant thereafter, learning effectively stops, which prevents the estimator from converging to the true values. This motivates a stronger requirement: the cost sequence must remain persistently informative throughout the horizon.

\begin{assumption}[Persistent excitations]
\label{ass:persist}
Let $\phi_0=(\eta_0, \theta_0,\rho_0)\in \Theta $ denote the true parameters. The cost sequence $c_\fullT$ satisfies that for any $\epsilon>0$, there exists $\kappa(\epsilon)>0$ such that for all $\phi\in\Theta$ with $\Vert \phi-\phi_0\Vert\geq \epsilon$:
\begin{equation}
    \blue{\liminf_{T\to \infty} \inf_{\Vert \phi-\phi_0\Vert\geq\epsilon}\frac{1}{T} \sum_{t=1}^T \KL(p_t(\cdot|\phi_0)\Vert p_t(\cdot|\phi)) \geq \kappa(\epsilon) \quad a.s.,}
\end{equation}
where $p_t(\cdot |\phi)$ refers to the choice probability distribution under parameter $\phi$, and $\KL(\cdot|\cdot)$ \blue{represents the Kullback--Leibler(KL) divergence}.
\end{assumption}

Intuitively, this assumption ensures that any parameter $\phi$ sufficiently far from the truth keeps generating choice distributions that are persistently distinguishable from those under $\phi_0$, that is, the KL divergence between them does not vanish. \blue{This persistent separation enables the posterior distribution to eventually reject incorrect parameter values.}

Under this condition and standard regularity assumptions, consistency can be established. 

\begin{theorem}
\label{thm:consistent}
Suppose:
\begin{enumerate}
\item \blue{The parameter space $\Theta= [\eta_{min},\eta_{max}]\times[\theta_{min},\theta_{max}]\times [\rho_{min}, \rho_{max}] \subset (0,1)\times(0,\infty)\times (0,1)$};
\item True parameter $\phi_0$ is in the interior of $\Theta$;
\item The prior is continuous and satisfies $p(\phi_0)>0$.
\end{enumerate}
Then, under Assumption \ref{ass:persist}, {the estimator is consistent}.
\end{theorem}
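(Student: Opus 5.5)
The plan is a Schwartz-type posterior consistency argument specialised to this independent but non-identically distributed setting. The posterior concentrates on every neighbourhood of $\phi_0$, and compactness of $\Theta$ then turns concentration into convergence of the posterior mean.

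Fix the realized cost sequence, which the model treats as exogenous; the perceived costs are then deterministic functions of $\eta$. Write the normalized log-likelihood ratio
\[
\frac{1}{T}\ell_T(\phi)=\frac{1}{T}\sum_{t=1}^T\sum_{n=1}^N \log\frac{p_t(X_t^n|\phi)}{p_t(X_t^n|\phi_0)} .
\]
This splits into a drift and a martingale part:
\[
\frac{1}{T}\ell_T(\phi)=-\frac{N}{T}\sum_{t=1}^T \KL\bigl(p_t(\cdot|\phi_0)\Vert p_t(\cdot|\phi)\bigr)+\frac{1}{T}\sum_{t=1}^T \xi_t(\phi),
\]
where the $\xi_t(\phi)$ are centered and independent across $t$ given the costs.

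The first step is uniform bounds. Since costs are bounded by $C$ and $V_t^n(i)$ is a convex combination of $0$ and past costs, we have $|V_t^n(i)|\le C$. Together with $\theta\le\theta_{max}$ and $\rho\in[\rho_{min},\rho_{max}]$, every choice probability is bounded below by a positive constant $\underline p$, uniformly in $t$ and $\phi\in\Theta$. Hence the log-ratios are uniformly bounded.

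Differentiating the smoothing recursion gives $|\partial V_t/\partial\eta|\le 2C\sum_{s\ge0} s(1-\eta_{min})^{s-1}$, which is bounded uniformly in $t$. So $\log p_t(\cdot|\phi)$ is Lipschitz in $\phi$ with a constant $L$ independent of $t$. This uniform-in-time Lipschitz property is the technical heart. It lets me pass from pointwise to uniform laws of large numbers over the compact $\Theta$ via a finite $\delta$-net.

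For each net point, a Hoeffding/Azuma bound plus Borel--Cantelli gives $\frac1T\sum_t\xi_t(\phi)\to0$ a.s. Lipschitz interpolation then yields $\sup_{\phi\in\Theta}|\frac1T\sum_t\xi_t(\phi)|\to0$ a.s.

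The second step separates the posterior mass inside and outside a ball around $\phi_0$. Combining Assumption~\ref{ass:persist} with the uniform LLN gives, almost surely for large $T$,
\[
\sup_{\Vert\phi-\phi_0\Vert\ge\epsilon}\ell_T(\phi)\le -NT\kappa(\epsilon)/2 .
\]
For the denominator, the KL terms are bounded by $c\Vert\phi-\phi_0\Vert^2$ or at least by $L'\Vert\phi-\phi_0\Vert$, using the lower bound $\underline p$. Together with the uniform LLN this gives $\ell_T(\phi)\ge -NT\kappa(\epsilon)/8$ on a small ball $B_\delta(\phi_0)$ contained in $\Theta$; this is where the interior assumption is used. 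That ball has positive prior mass because the prior is continuous with $p(\phi_0)>0$.

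The posterior mass of $\{\Vert\phi-\phi_0\Vert\ge\epsilon\}$ is therefore at most
\[
\frac{e^{-NT\kappa/2}}{\Pi(B_\delta)\,e^{-NT\kappa/8}}\to0 \quad \text{a.s.}
\]
Finally, $\Theta$ is bounded, so
\[
\Vert\hat\phi_T-\phi_0\Vert\le \epsilon+\operatorname{diam}(\Theta)\,\Pi\bigl(\Vert\phi-\phi_0\Vert\ge\epsilon\mid X_{1:T}\bigr),
\]
and letting $\epsilon\downarrow0$ along a countable sequence gives almost-sure convergence.

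The main obstacle is the uniform-in-$\phi$ control of the martingale part. The terms are not identically distributed, and the dependence on $\eta$ enters through the entire recursive history. The step I would check most carefully is therefore the geometric-series bound on $\partial V_t/\partial\eta$, which makes the Lipschitz constant independent of $t$. If that bound failed near $\eta_{min}$, I would fall back on restricting the net argument to the compact $[\eta_{min},\eta_{max}]$. There the sum $\sum_s s(1-\eta_{min})^{s-1}=\eta_{min}^{-2}$ remains finite.
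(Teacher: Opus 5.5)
Your proposal is correct and follows essentially the same route as the paper. Both arguments use a drift/martingale split with drift $-\KL(p_t(\cdot|\phi_0)\Vert p_t(\cdot|\phi))$, a time-uniform Lipschitz constant coming from the bounded derivative $\partial V_t/\partial\eta$, and a finite-net uniform strong law. They then show exponential decay of the numerator via Assumption~\ref{ass:persist} and finish with $\Vert\hat\phi_T-\phi_0\Vert\le\epsilon+\mathrm{diam}(\Theta)\,P(A_\epsilon|X_{1:T})$.

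The differences are minor:
\begin{itemize}
\item You lower-bound the denominator on a fixed ball $B_\delta(\phi_0)$ using the KL bound plus the uniform law. The paper instead uses a shrinking ball of radius $1/\sqrt{T}$ with the pathwise bound $|l_T(\phi)-l_T(\phi_0)|\le C_L T\Vert\phi-\phi_0\Vert$.
\item You handle $N$ commuters directly rather than reducing to $N=1$.
\item The recursion $|\partial_\eta V_{t+1}|\le(1-\eta)|\partial_\eta V_t|+2C$ already gives the constant $2C/\eta_{\min}$, so your fallback step is not needed.
\end{itemize}
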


\subsection{Endogenizing initial values}
The baseline model above assumes exogenously specified initial valuations, such as zeros or the free-flow travel times. While reasonable for newcomers with no prior experience, this assumption may be unrealistic for long-term residents who have already accumulated information.

A natural remedy is to treat the initial valuations $V_1(i)$ as additional model parameters to be estimated. However, this creates a non-identifiability: because multinomial logit choice probabilities depend only on relative costs, any global shift in the initial valuations leaves the likelihood unchanged.

\begin{proposition}
\label{pp:endogV-nonidentify}
For any feasbile $(\eta,\theta,\rho)$, the parameter sets $(\eta,\theta, \rho, V_1)$ and $(\eta,\theta,\rho,V_1')$, where $V_1'(i)=V_1(i)+K$ for any constant $K\neq 0$ and $i\in[M]$, are observationally equivalent and thus not identifiable.
\end{proposition}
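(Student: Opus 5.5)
The plan is to show directly that the two parameter sets induce identical day-by-day choice probabilities for every commuter. Identical per-day probabilities then give identical joint likelihoods through the factorization in Equation~(\ref{eq:singleOD}), and hence through Equation~(\ref{eq:multipleOD}) in the multi-OD case.

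\emph{Step 1: the shift persists through the updates.} Fix $(\eta,\theta,\rho)$ and a cost sequence $c_\fullT$. Let $V_t$ and $V_t'$ be the perceived costs generated by Equation~(\ref{eq:individual_1}) from $V_1$ and $V_1'=V_1+K$. I will show by induction on $t$ that $V_t'(i)=V_t(i)+(1-\eta)^{t-1}K$ for all $i\in[M]$. The base case $t=1$ is the definition of $V_1'$. For the inductive step, the update is affine with the same exogenous cost term $\eta c_t(i)$ under both parameter sets, so
\[
V_{t+1}'(i)=(1-\eta)\bigl(V_t(i)+(1-\eta)^{t-1}K\bigr)+\eta c_t(i)=V_{t+1}(i)+(1-\eta)^{t}K .
\]
The key feature is that on each day the offset $K_t:=(1-\eta)^{t-1}K$ is the same for every route $i$. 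Because the costs $c_t$ are exogenous and $\scrF_{t-1}$ contains only cost history, this argument does not depend on the realized choices.

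\emph{Step 2: the logit probabilities are invariant to a common shift.} For $i\in[M]$,
\[
\frac{e^{-\theta(V_t(i)+K_t)}}{\sum_{m}e^{-\theta(V_t(m)+K_t)}}=\frac{e^{-\theta V_t(i)}}{\sum_m e^{-\theta V_t(m)}},
\]
since the factor $e^{-\theta K_t}$ cancels between numerator and denominator. So Equation~(\ref{eq:individual_2}) yields the same $p_t^n(i)$ under both parameter sets. Equation~(\ref{eq:individual_0}) does not involve $V$ at all, so $p_t^n(0)=\rho$ under both.

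\emph{Step 3: conclude.} Taking products over $t$ and $n$, $P(X_\fullT=x_\fullT\mid\eta,\theta,\rho,V_1,c_\fullT)=P(X_\fullT=x_\fullT\mid\eta,\theta,\rho,V_1',c_\fullT)$ for every $x_\fullT$. Since $V_1'\neq V_1$ when $K\neq0$, the identifiability requirement of Definition~\ref{def:identifiability}, extended to include $V_1$ among the parameters, fails. This gives observational equivalence and hence non-identifiability.

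There is no real obstacle here. The only point needing care is Step 1: one must check that the shift remains uniform across routes after each update, which holds because $\eta$ is common to all routes and the cost term does not depend on $V$.
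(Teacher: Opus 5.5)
Your proof is correct and takes essentially the paper's approach: an induction through the exponential-smoothing update showing the shift stays common across routes, then the shift invariance of the logit. The paper first reduces to $N=1$ and tracks the differences $V_t(i)-V_t(1)$, while you track the explicit offset $(1-\eta)^{t-1}K$ and take products over all commuters directly; the two arguments are equivalent.
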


To resolve this issue, we parameterize initial conditions using relative differences $\delta(j)=\Delta V_1(j,1)$, $j=2,...,M$. This pins down the location of the initial valuations up to the irrelevant global shift, introducing $M - 1$ additional parameters. Identifying these additional degrees of freedom requires a somewhat stronger
condition on the cost sequence.

\begin{assumption}[Stronger dynamic richness]
\label{ass:richer}
\blue{Suppose $T\geq 3$. The cost difference sequence $\Delta c_t(\cdot,1) = \big( \Delta c_t(2,1),\ldots,\Delta c_t(M,1) \big)^\top$ satisfies the following conditions:
\begin{enumerate} 
\item There exists a day $s\in\{1,\ldots,T-2\}$ such that $\Delta c_s(\cdot,1)\neq 0.$
\item There is no scalar $r\in\mathbb{R}$ such that $\Delta c_{t+1}(\cdot,1) = r\Delta c_t(\cdot,1), \ t=1,\ldots,T-2.$
\end{enumerate}}
\end{assumption}

\blue{This assumption ensures the cost sequence provides sufficient temporal variation to distinguish the initial offsets $\delta$ from the learning rate $\eta$. The condition remains relatively mild in practice, as cost sequences with moderate temporal variation will generally satisfy it. The following theorem establishes identifiability for the model with endogenized initial valuations.}
\begin{theorem}
\label{thm:endogenize}
Under Assumption \ref{ass:richer}, the model with parameters $\eta,\theta,\rho,\delta(2),...,\delta(M)$ is identifiable.
\end{theorem}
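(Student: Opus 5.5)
The plan is to reduce equality of the joint likelihoods to equality of the day-by-day logit log-odds, and then exploit the linear recursion that the relative valuations satisfy. Throughout, take $M\geq 2$, and work with relative perceived costs $D_t(j)=V_t(j)-V_t(1)$ for $j=2,\dots,M$. By (\ref{eq:individual_1}) these obey
\begin{equation*}
D_{t+1}=(1-\eta)D_t+\eta\,\Delta c_t(\cdot,1),\qquad D_1=\delta .
\end{equation*}
In the pooled model all commuters share the same $p_t(\cdot)$. The joint probability (\ref{eq:singleOD}) is a product of per-day, per-commuter factors, each with full support on $\{0,\dots,M\}$. Summing over all coordinates except one commuter on one day $t$ therefore recovers the marginal $p_t(\cdot)$. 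Hence equality of the joint laws under $(\eta,\theta,\rho,\delta)$ and $(\eta',\theta',\rho',\delta')$ implies $p_t(i)=p_t'(i)$ for all $t\in[T]$ and $i\in\{0,\dots,M\}$.

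From $p_t(0)=\rho$ we get $\rho=\rho'$ at once. For $j\geq 2$, (\ref{eq:individual_2}) gives
\begin{equation*}
\log\frac{p_t(j)}{p_t(1)}=-\theta D_t(j),
\end{equation*}
so $E_t:=\theta D_t=\theta' D_t'$ for every $t=1,\dots,T$. Multiplying the recursion by $\theta$ and by $\theta'$ yields two expressions for $E_{t+1}$. Subtracting them gives
\begin{equation*}
(\eta'-\eta)\,E_t=(\eta'\theta'-\eta\theta)\,\Delta c_t(\cdot,1),\qquad t=1,\dots,T-1 .
\end{equation*}

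The key step, and the only place Assumption~\ref{ass:richer} is really used, is to rule out $\eta\neq\eta'$. Suppose $\eta\neq\eta'$. Then $E_t=r\,\Delta c_t(\cdot,1)$ for $t\leq T-1$, where $r=(\eta'\theta'-\eta\theta)/(\eta'-\eta)$. Substituting this into $E_{t+1}=(1-\eta)E_t+\eta\theta\Delta c_t(\cdot,1)$ gives, for $t=1,\dots,T-2$,
\begin{equation*}
r\,\Delta c_{t+1}(\cdot,1)=\big(r(1-\eta)+\eta\theta\big)\,\Delta c_t(\cdot,1).
\end{equation*}
There are two cases.
\begin{itemize}
\item If $r\neq 0$, then $\Delta c_{t+1}(\cdot,1)=s\,\Delta c_t(\cdot,1)$ for all such $t$, with the fixed scalar $s=(1-\eta)+\eta\theta/r$. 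This contradicts condition 2.
\item If $r=0$, then $\eta\theta\,\Delta c_t(\cdot,1)=0$ for $t\leq T-2$. Since $\eta\theta>0$, this contradicts condition 1.
\end{itemize}
Hence $\eta=\eta'$.

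With $\eta=\eta'$, the displayed identity becomes $\eta(\theta'-\theta)\Delta c_t(\cdot,1)=0$. Evaluating it at the day $s$ from condition 1 forces $\theta=\theta'$. Finally, $E_1=\theta\delta=\theta\delta'$ gives $\delta=\delta'$.

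The main obstacle I anticipate is the case analysis on $r$: it must be checked that the scalar $s$ is constant in $t$, so that it really matches the "no scalar $r$" clause of the assumption. The marginalization step needs only the full-support property of the logit and the Bernoulli non-travel component, both of which hold since $\rho\in(0,1)$ and $\theta$ is finite.
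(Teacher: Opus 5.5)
Your proposal is correct and follows essentially the same route as the paper: equate log-odds to get $\theta D_t=\theta' D_t'$, subtract the two recursions to obtain $(\eta'-\eta)E_t=(\eta'\theta'-\eta\theta)\Delta c_t(\cdot,1)$, and rule out $\eta\neq\eta'$ by splitting on whether the ratio is zero, which contradicts condition 1, or nonzero, which contradicts condition 2 of Assumption~\ref{ass:richer}; then recover $\theta$ from condition 1 and $\delta$ from $E_1$. The only minor difference is how you obtain equality of the per-day choice probabilities: you sum the product likelihood over all other coordinates, whereas the paper reduces to a single commuter via its replication lemma, and your version is, if anything, slightly more explicit.
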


\section{Model Extensions}
\label{sec:extension}
So far, we have established a statistical inference framework for the pooled model under full observability of homogeneous travelers. In this section, we extend the framework to incorporate two more realistic features: user heterogeneity and anonymized observability.

\subsection{User Heterogeneity}
The pooled model characterizes all commuters using a single set of parameters, assuming homogeneous behavior. While this yields tractable estimation, it cannot recover the {distribution} of behavioral parameters across individuals, even though this distribution is itself a primary object of interest.
We therefore introduce a hierarchical Bayesian model that treats individual parameters as random draws from a population distribution, and estimates that distribution directly from the data.

Before proceeding, it is worth clarifying why the pooled posterior distribution cannot simply be reinterpreted as the population distribution of behavioral parameters. Although this may seem natural, it is generally misleading. To illustrate, consider a coin-flipping experiment in which $p$ denotes the probability of heads. Suppose the data are generated either (i) by a homogeneous population with $p=1/2$, or (ii) by a heterogeneous population in which each individual has $p\in\left\{0,1\right\}$ with equal probability. Both scenarios produce identical $\text{Bernoulli}(1/2)$ observations. As the sample size grows, the posterior distribution for $p$ concentrates at $1/2$ in both cases, even though the underlying population distributions of $p$ are fundamentally different. The same issue arises here: a pooled estimate of $\eta$ tells us the average learning rate across commuters, but reveals nothing about
whether some commuters are fast learners and others are slow. Recovering that heterogeneity requires a model that explicitly represents it.


\subsubsection{Model}
As demonstrated in Figure \ref{fig:hier}, we assume each traveler $n$ draws an individual parameter vector $\phi^n=(\eta^n, \theta^n,\rho^n)$ from a population distribution governed by hyperparameters $H$. Given $\phi^n$, traveler $n$'s day-to-day choices follow the same individual model as in Section~\ref{ssec:individual}. We denote the hyperpriors by $p(H)$.

\begin{figure}[!ht]
    \centering
    \includegraphics[width=0.55\linewidth]{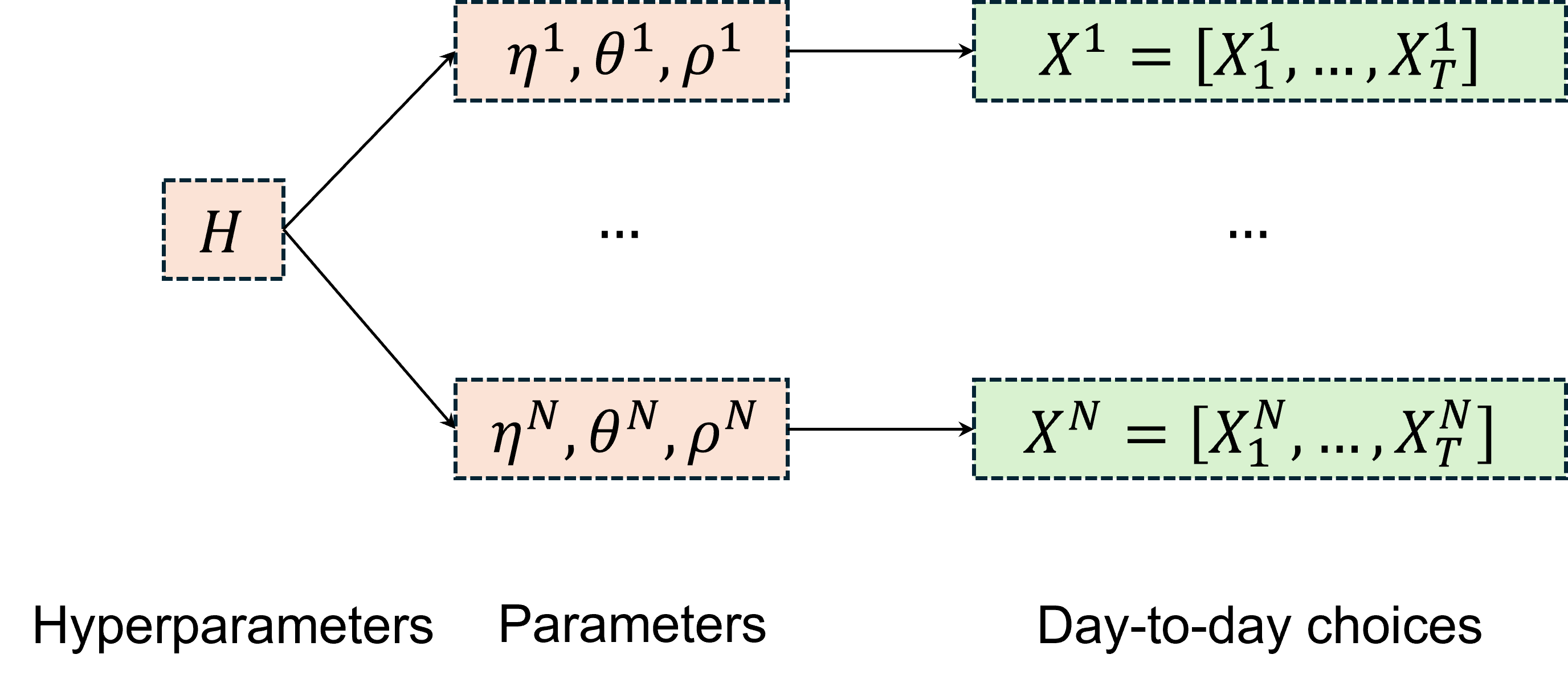}
    \caption{Illustration of the hierarchical model}
    \label{fig:hier}
\end{figure}

The choice of population distributions is flexible. A convenient specification that respects parameter constraints while allowing asymmetric shapes is:
\begin{itemize}
\item Logit-normal for $\eta^n$ and $\rho^n$
\begin{equation}
    \log\left( {\eta^n}/{(1-\eta^n)} \right) \sim \scrN(\mu_\eta, \sigma_\eta^2),
\end{equation}
\begin{equation}
    \log\left( {\rho^n}/{(1-\rho^n)} \right) \sim \scrN(\mu_\rho, \sigma_\rho^2);
\end{equation}
\item Log-normal for $\theta^n$
\begin{equation}
    \log(\theta^n) \sim \scrN(\mu_\theta,\sigma_\theta^2).
\end{equation}
\end{itemize}

Under this specification, the hyperparameters are $H=(\mu_\eta,\sigma_\eta,\mu_\theta,\sigma_\theta,\mu_\rho,\sigma_\rho)$. We place normal hyperpriors on the location parameters $\mu_\eta,\mu_\theta,\mu_\rho$ and half-normal hyperpriors on the scale parameters $\sigma_\eta,\sigma_\theta,\sigma_\rho$. \blue{The half-normal distribution has support on the nonnegative real line and provides regularization for the population-level scale parameters.} 

With heterogeneous parameters, the day-$t$ choice probabilities become
\begin{equation}
    P(X_t^n=0|\rho^n,\eta^n,\theta^n, \scrF_\tmo)=\rho^n,
\end{equation}
\begin{equation}
    P(X_t^n=i|\rho^n,\eta^n,\theta^n, \scrF_\tmo)=  \frac{(1-\rho^n)e^{-\theta^n V_t^n(i)}}{\sum_{j\in[M]} e^{-\theta^n V_t^n(j)}}
\end{equation}
for all $i\in[M], t\in[T]$. We retain the notation $p_t^n(i)$ for these probabilities with minor abuse of notation.

Formal identification of the hyperparameters $H$ requires a mixture identifiability argument that combines Theorem~\ref{thm:basic} with properties of the logit-normal family; we leave a rigorous treatment for future work. In practice, $H$ is estimable when both $N$ and $T$ are sufficiently large, as individual parameters become well-identified with large $T$ based on previous theorems, and their population distribution becomes recoverable with large $N$, which will be further illustrated in simulation studies in Section \ref{sec:simulation}.

\subsubsection{Estimation}
The complete-data log-likelihood under the hierarchical model is
\begin{equation}
\label{eq:loglikelihood_hier}
\begin{aligned}
&\log L(\phi^1,\ldots,\phi^N) \\
=&
\sum_{n=1}^N\sum_{t=1}^T
\left[
I_t^n(0)\log\rho^n
+
\sum_{m=1}^M I_t^n(m)\log p_t^n(m)
\right],
\end{aligned}
\end{equation}


The joint posterior over individual parameters and hyperparameters is
\begin{equation}
\begin{aligned}
   & p(\phi^1,...,\phi^N, H| x_\fullT, c_\fullT) \\
     \propto &L(\phi^1,...,\phi^N) p(\phi^1,...,\phi^N|H) p(H),
\end{aligned}
\end{equation}

In practice, we sample jointly from this full posterior using NUTS. Marginal \blue{posterior distributions} for the hyperparameters $H$ and for each individual parameter vector $\phi^n$ are then read off directly from the posterior draws. The Bayes estimator $\hat{H}$ and individual estimates $\hat{\phi}^n$ are taken as posterior means, and credible intervals are computed via HDIs as in Section~\ref{ssec:estimation}.

However, directly sampling over $(H,\phi^1,...,\phi^N)$ can exhibit numerical pathologies, known as Neal’s funnel geometry \citep{neal2003slice}. The problem arises because the geometry of the posterior changes drastically with $H$: when $\sigma_\eta$ is large, the individual parameters $\eta^n$ can vary widely; when $\sigma_\eta$ is small, all $\eta^n$ must lie very close to $\mu_\eta$, creating a narrow ridge of high posterior mass that is difficult for HMC-based samplers to explore accurately. A step size that works well in the wide regime
is far too large for the narrow regime, leading to either poor exploration or rejected proposals.

To improve posterior geometry, we employ a non-centered parameterization for all hierarchical parameters. For example,
\begin{equation}
\label{eq:mixing2}
    \log\left( \frac{\eta^n}{1-\eta^n} \right) = \mu_\eta+\sigma_\eta z_\eta^n, \quad z_\eta^n\sim\scrN(0,1),
\end{equation}
and similar for the $\theta^n$ and $\rho^n$. This formulation preserves the same generative process as the centered model but decouples the latent individual offsets $z^n$ from the scale parameters $\sigma$. The posterior over $z^n$ is now approximately standard normal regardless of $\sigma_\eta$, yielding a well-conditioned geometry that NUTS can explore efficiently across the full range of hyperparameter values.

\subsection{Anonymized Observations}

The preceding sections assumed complete individual-level observability, motivated by the emergence of high-resolution trajectory data. However, privacy regulations may require that user identifiers be reshuffled or anonymized on a daily basis, so that daily route counts are available but individual trajectories cannot be linked across days. Note that this observability issue considered here should not be confused with another separate problem of recovering path flows from link-level observations.

\subsubsection{Observation model}
We introduce the following observation model shown in Figure \ref{fig:partial}. As discussed earlier, we still present the model for a single OD pair, which can be easily extended to multiple commodities.

On each day $t$, we observe an $(M+1)$-dimensional vector $O_t=[O_t(0),...,O_t(M)]$, where each element denotes the number of commuters choosing that route, including the virtual ``not traveling'' option:
\begin{equation}
\label{eq:observation}
    O_t(i)=\sum_{n=1}^N \textbf{1}\left\{X_t^n = i\right\}, i=0,...,M.
\end{equation}
We write $O_t=\left\{O_t(i)\right\}_{i=0}^M$ and denote the full observation sequence as $O=\left\{O_t\right\}_{t=1}^T$. 

\begin{figure}[!ht]
    \centering
    \includegraphics[width=0.8\linewidth]{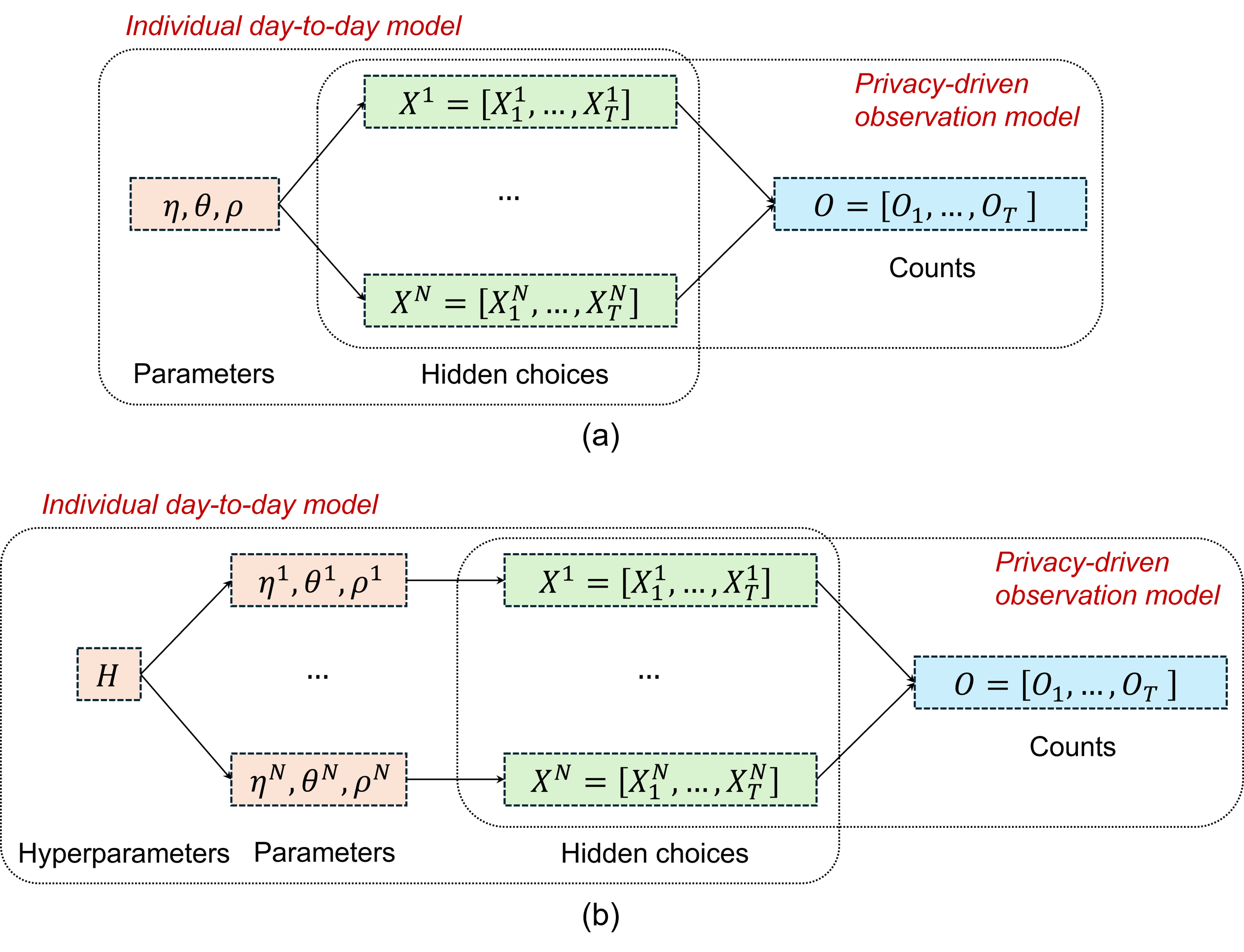}
    \caption{Anonymized observability: (a) pooled model; (b) hierarchical model.}
    \label{fig:partial}
\end{figure}

\subsubsection{Pooled model}
\label{sssec:partial+pool}
Under the pooled model, shown in Figure \ref{fig:partial}(a), all travelers share the same behavioral parameters and initial values. Hence $p_t^n(i)=p_t^{n'}(i)=p_t(i)$ for all $t\in[T], n,n'\in[N]$ and all $i=0,...,M$.

Because all commuters draw independently from the same categorical distribution $p_t=[p_t(0),...,p_t(M)]$, the observed count vector follows a multinomial distribution:
\begin{equation}
\begin{aligned}
&P(O_t=o_t|\eta, \theta,\rho, \scrF_\tmo) \\
=& \frac{N!}{o_t(0)!\cdots o_t(M)!} \prod_{i=0}^M p_t(i)^{o_t(i)} 
\end{aligned}
\end{equation}

Therefore, for a realized observation sequence, the likelihood function is:
\begin{equation}
\begin{aligned}
    &L(\eta,\theta,\rho)= \prod_{t=1}^T P(O_t=o_t|\eta, \theta,\rho, \scrF_\tmo).
\end{aligned}
\end{equation}
Taking logs yields
\begin{equation}
\label{eq:loglikelihood_hier_partial}
\begin{aligned}
    \log L(\eta,&\theta,\rho)= T\log N! \\
    +& \sum_{t=1}^T\sum_{i=0}^M \left( o_t(i)\log p_t(i) - \log o_t(i)! \right).
\end{aligned}
\end{equation}
The log-posterior is then obtained by adding the log-priors as in the fully observable case.

Interestingly, anonymized observability does not weaken identifiability in the pooled model.
\begin{proposition}
\label{pp:partial-pool-identify}
Under Assumption \ref{ass:rich}, the pooled model with parameters $(\eta,\theta,\rho)$ and observation model (\ref{eq:observation}) is identifiable.
\end{proposition}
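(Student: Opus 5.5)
The plan is to reduce identifiability under the anonymized observation model (\ref{eq:observation}) to identifiability of the daily choice distributions $p_t(\cdot)$, and then reuse the argument behind Theorem~\ref{thm:basic}. In the pooled model all commuters share the same initial valuations, the same parameters, and the same exogenous cost sequence $c_\fullT$. Hence $V_t^n=V_t$ for every $n$, and $p_t(\cdot\mid\phi)$ is a deterministic function of $(\phi,c_{1:t-1})$. The count vectors $O_1,\dots,O_T$ are therefore independent multinomials with $N$ trials and cell probabilities $p_t(\cdot\mid\phi)$, and the likelihood factorizes over days.

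\emph{Step 1 (from counts to daily probabilities).} Suppose $\phi=(\eta,\theta,\rho)$ and $\phi'=(\eta',\theta',\rho')$ induce the same law of $O=\{O_t\}_{t=1}^T$. Because the days are independent, the marginal law of each $O_t$ must also agree. With $N\ge 1$, the mean of $O_t$ is $N p_t(\cdot\mid\phi)$, so $p_t(i\mid\phi)=p_t(i\mid\phi')$ for all $i\in\{0,\dots,M\}$ and $t\in[T]$. Equivalently, one can evaluate the likelihood at the outcome in which all $N$ commuters pick route $i$: its probability is $p_t(i)^N$, and taking $N$-th roots of nonnegative numbers recovers $p_t(i)$.

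\emph{Step 2 (from daily probabilities to parameters).} Equality of all $p_t(\cdot)$ is exactly the condition that the fully observed model in Theorem~\ref{thm:basic} uses. Indeed, the full-observation likelihood (\ref{eq:singleOD}) is $\prod_{n,t}p_t(x_t^n)$, so equal $p_t$'s give equal likelihoods for every $x_\fullT$. Theorem~\ref{thm:basic} then gives $\phi=\phi'$ under Assumption~\ref{ass:rich}. If one prefers not to route through the full-data likelihood, the direct chain is short:
\begin{itemize}
\item $p_t(0)=\rho$ identifies $\rho$.
\item The log-ratio $\log\bigl(p_t(i)/p_t(j)\bigr)=-\theta\,\Delta V_t(i,j)$ is identified for every $t$.
\item Using $\Delta V_{t+1}=(1-\eta)\Delta V_t+\eta\Delta c_t$ on the days $t,t+1,t+2$ supplied by Assumption~\ref{ass:rich}, the ratios of successive differences of $\theta\Delta V$ pin down $(1-\eta)$, and then $\theta$.
\end{itemize}

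\emph{Main obstacle.} I expect Step 1 to be essentially immediate. The delicate point is to make sure the pooled structure really implies that the $O_t$ are independent with deterministic probabilities. This holds because the costs are exogenous and $\scrF_{t-1}$ contains only cost history, not choices. If costs were endogenous, the marginal of $O_t$ would be a mixture, and I would instead condition on $\scrF_{t-1}$ and argue day by day. The remaining care is in Step 2, making the degenerate cases of Assumption~\ref{ass:rich} rigorous; there I would simply cite Theorem~\ref{thm:basic}.
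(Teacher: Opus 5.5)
Your proposal is correct and follows the paper's route. The paper handles this proposition by analogy with Proposition~\ref{pp:partial-pool-endogV-identify}: equal multinomial count laws force equal daily choice probabilities $p_t(\cdot)$, and then the fully observed identifiability result, here Theorem~\ref{thm:basic}, finishes the argument; your mean-matching (or all-$N$-on-route-$i$) argument simply replaces the paper's citation of Lehmann--Casella's Corollary~6.16. The one loose spot is your optional ``direct chain'': since $\theta\Delta V_{t+2}-\theta\Delta V_{t+1}=(1-\eta)(\theta\Delta V_{t+1}-\theta\Delta V_t)+\theta\eta(\Delta c_{t+1}-\Delta c_t)$, the ratio of successive differences recovers $1-\eta$ only when the cost difference is constant. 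You are therefore right to rely on Theorem~\ref{thm:basic}, which treats $\Delta V_1\neq 0$ and $\Delta V_1=0$ separately, rather than on that sketch.
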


Similar for the model with endogenized initial values:
\begin{proposition}
\label{pp:partial-pool-endogV-identify}
Under Assumption \ref{ass:richer}, the pooled model with parameters $(\eta,\theta,\rho,\delta(2),...,\delta(M))$ and observation model (\ref{eq:observation}) is identifiable.
\end{proposition}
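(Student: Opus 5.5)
Under the pooled model with anonymized observations, the distribution of $O=(O_1,\dots,O_T)$ is a product of multinomials with cell probabilities $p_t=(p_t(0),\dots,p_t(M))$. The multinomial family with fixed $N\ge 1$ is identifiable in its probability vector: the mean is $\mathbb{E}[O_t]=N p_t$. Hence equality of the laws of $O$ under $(\eta,\theta,\rho,\delta)$ and $(\eta',\theta',\rho',\delta')$ forces $p_t(i\mid\phi)=p_t(i\mid\phi')$ for all $t\in[T]$ and $i\in\{0,\dots,M\}$. I will first record this reduction: the observation model only matters through the per-day choice probabilities, because all commuters share the same $V_t$ and therefore the same $p_t$. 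The problem is then identical to the one solved in Theorem~\ref{thm:endogenize}. In the fully observed case, equality of the joint likelihood for all $x_\fullT$ is likewise equivalent, by taking marginals, to equality of the day-wise categorical probabilities $p_t$.

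The plan is to reuse the argument of Theorem~\ref{thm:endogenize} on the sequence $\{p_t\}$, spelling out the key steps so the reduction is transparent.

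\textbf{Step 1: identify $\rho$.} From $p_t(0)=\rho$ we get $\rho=\rho'$.

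\textbf{Step 2: reduce to utility differences.} Using $\rho\in(0,1)$, log-ratios of the physical-path probabilities give, for every $t$ and $j=2,\dots,M$,
\[
\theta\,\Delta V_t(j,1)=\theta'\,\Delta V_t'(j,1).
\]

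\textbf{Step 3: unroll the smoothing recursion.} Since $V_1$ enters only through $\delta$,
\[
\Delta V_t(\cdot,1)=(1-\eta)^{t-1}\delta+\eta\sum_{s=1}^{t-1}(1-\eta)^{t-1-s}\Delta c_s(\cdot,1).
\]
At $t=1$, Step 2 gives $\theta\delta=\theta'\delta'$.

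\textbf{Step 4: difference successive days.} Write $u_t=\theta\,\Delta V_t(\cdot,1)$. Then
\[
u_{t+1}-u_t=\theta\eta\,\bigl(\Delta c_t(\cdot,1)-\Delta V_t(\cdot,1)\bigr),
\]
equivalently $u_{t+1}=(1-\eta)u_t+\theta\eta\,\Delta c_t(\cdot,1)$. Because $u_t=u_t'$ for all $t$, subtracting the primed recursion yields
\[
(\eta'-\eta)\,u_t=(\theta'\eta'-\theta\eta)\,\Delta c_t(\cdot,1),\qquad t=1,\dots,T-1.
\]

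\textbf{Step 5: conclude via Assumption~\ref{ass:richer}.} Suppose $\eta\neq\eta'$. Then $u_t=\lambda\,\Delta c_t(\cdot,1)$ with $\lambda=(\theta'\eta'-\theta\eta)/(\eta'-\eta)$. Substituting into the recursion gives
\[
\lambda\,\Delta c_{t+1}(\cdot,1)=\bigl((1-\eta)\lambda+\theta\eta\bigr)\Delta c_t(\cdot,1),\qquad t=1,\dots,T-2.
\]
The degenerate case $\lambda=0$ is excluded. Otherwise $\theta\eta\,\Delta c_t(\cdot,1)=0$ for all $t\le T-2$, hence $\Delta c_t(\cdot,1)=0$ for those $t$, contradicting part 1 of the assumption. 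With $\lambda\neq 0$, the display exhibits a scalar $r$ with $\Delta c_{t+1}(\cdot,1)=r\,\Delta c_t(\cdot,1)$, contradicting part 2. So $\eta=\eta'$, and then $\theta\eta\,\Delta c_t=\theta'\eta\,\Delta c_t$ at the day $s$ from part 1 gives $\theta=\theta'$. Finally $\theta\delta=\theta'\delta'$ gives $\delta=\delta'$.

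The main obstacle is that no new idea should be needed beyond the reduction. The only delicate point is confirming the multinomial step: identifiability of $p_t$ from the law of $O_t$ when the $O_t$ are not independent across days, since $p_t$ is deterministic given costs. Because $\scrF_{t-1}$ consists only of the exogenous costs, $p_t$ is nonrandom and the $O_t$ are independent multinomials. The marginal law of each $O_t$ then pins down $p_t$, which is all that is required.
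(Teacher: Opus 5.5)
Your proposal is correct and takes essentially the same route as the paper. Both arguments first reduce equality of the multinomial count laws to equality of the day-wise choice probabilities $p_t$; the paper cites Lehmann's Corollary 6.16 for this step, while you use the more elementary fact $\mathbb{E}[O_t]=Np_t$. Both then rely on the argument of Theorem~\ref{thm:endogenize}, which you re-derive faithfully, including the $\lambda=0$ case (your wording ``Otherwise'' in Step 5 should read ``if $\lambda=0$ then'') and the use of both parts of Assumption~\ref{ass:richer}.
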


The key reason is permutation invariance: since all commuters share the same choice distribution under the pooled model, the likelihood is unchanged if we permute the labels of any two commuters. This means that only aggregate counts matter for inference and individual identities carry no additional information. The following theorem formalizes this equivalence and reveals what identifiability in the pooled model truly requires.

\begin{theorem}
\label{thm:equal}
For any realized choice sequence $x_\fullT$ and its corresponding aggregate observations $o_\fullT$, the two posteriors are equal:
\begin{equation}
    p(\eta,\theta,\rho|o_\fullT, c_\fullT) = p(\eta,\theta,\rho|x_\fullT, c_\fullT)
\end{equation}
for all $\eta,\theta,\rho.$
\end{theorem}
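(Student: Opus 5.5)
The strategy is to show that the two likelihoods, viewed as functions of $(\eta,\theta,\rho)$, differ only by a positive multiplicative factor that does not depend on the parameters. Since both posteriors use the same prior $p(\eta)p(\theta)p(\rho)$, such a factor cancels in the normalizing constants, and the posteriors coincide pointwise. The key structural fact in the pooled model is that the valuations $V_t^n(i)$ obey the recursion (\ref{eq:individual_1}) driven only by the exogenous costs $c_\fullT$, starting from the common initial value $V_1$. Hence $V_t^n=V_t$ for all $n$, and the choice probabilities $p_t^n(i)=p_t(i)$ depend only on $(t,i)$ and the parameters, never on the commuter's own past choices.

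First, using (\ref{eq:singleOD}), write the full-observation likelihood as
\[
L_x(\eta,\theta,\rho)=\prod_{t=1}^T\prod_{n=1}^N p_t(x_t^n)=\prod_{t=1}^T\prod_{i=0}^M p_t(i)^{o_t(i)},
\]
where the second equality groups the commuters by their chosen route on each day and uses $o_t(i)=\sum_n \mathbf{1}\{x_t^n=i\}$. Second, the anonymized likelihood is
\[
L_o(\eta,\theta,\rho)=\prod_{t=1}^T \frac{N!}{o_t(0)!\cdots o_t(M)!}\prod_{i=0}^M p_t(i)^{o_t(i)}=K(o_\fullT)\,L_x(\eta,\theta,\rho),
\]
where $K(o_\fullT)=\prod_t N!/\prod_i o_t(i)!>0$ depends only on the data. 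Third, apply Bayes' rule:
\[
p(\phi\mid o_\fullT,c_\fullT)=\frac{K\,L_x(\phi)p(\phi)}{\int K\,L_x(\phi')p(\phi')\,d\phi'}=p(\phi\mid x_\fullT,c_\fullT),
\]
with $\phi=(\eta,\theta,\rho)$. The normalizing integral is finite and positive because $L_x\le 1$, $L_x>0$ on the parameter domain (all $p_t(i)>0$ since $\rho\in(0,1)$ and logit probabilities are positive), and the prior is proper.

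The one step needing care is justifying that the multinomial model in the anonymized case is the correct conditional law. This requires that $p_t$ is $\scrF_{t-1}$-measurable through the costs alone, so that conditioning on past aggregate counts rather than past individual choices does not change it. This holds because costs are treated as exogenously given and the perceived costs are updated from realized path costs $c_t(i)$ for every path, not only the chosen one. Once this is noted, the sequential product of multinomial probabilities equals the joint law of $O_\fullT$, and the rest is the algebra above.
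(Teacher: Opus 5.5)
Your proposal is correct and follows essentially the same route as the paper: both show that the multinomial likelihood equals the individual-level likelihood $\prod_{t}\prod_{i} p_t(i)^{o_t(i)}$ up to the data-only factor $\prod_t N!/\prod_i o_t(i)!$, which cancels when each posterior is normalized against the same prior. Your additional remarks are sound refinements that the paper leaves implicit: that the normalizing constant is finite and positive, and that the multinomial law is the correct conditional law because the valuations depend only on the exogenous costs.
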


This theorem indicates that individual-level data and aggregate count data are informationally equivalent for the purpose of recovering $(\eta, \theta, \rho)$. Therefore, identifiability in day-to-day models hinges not on the granularity of individual tracking, but on the observability of the path flow over time.

\subsubsection{Hierarchical model}
The pooled case is convenient because all commuters share the same choice distribution, yielding multinomial observations. In the hierarchical model with anonymized observability (Figure \ref{fig:partial}(b)), this convenience no longer holds: different travelers may have different parameters and initial values, leading to different choice distributions $p_t^n$. The aggregate count vector $o_t$ now follows a Poisson-multinomial distribution (PMD) \citep{lin2022poisson}:
\begin{equation}
    o_t\sim PMD(p_t^1,...,p_t^N), \ \forall t\in[T].
\end{equation}
The first two moments are
\begin{equation}
    \scrE[o_t] = \sum_{n=1}^N p_t^n, \label{eq:E}
\end{equation}
\begin{equation}
    \text{Cov}(o_t) = \sum_{n=1}^N \left( \text{diag}(p_t^n) - p_t^n (p_t^n)^T \right), \label{eq:Cov}
\end{equation}
where $\text{diag}(p_t^n)$ denotes the diagonal matrix with entries $p_t^n(0),...,p_t^n(M)$. 

However, the PMD does not admit a simple closed-form likelihood, and exact computation is too expensive to embed inside an MCMC sampling loop. We therefore adopt an approximation. During sampling, whenever the likelihood is needed, we approximate $o_t$ by a multinomial random vector
\begin{equation}
    \otild_t\sim \text{Multinomial}(N, \Bar{p}_t),
\end{equation}
where $\Bar{p}_t = (1/N) \sum_{n=1}^N p_t^n$.

\begin{proposition}
\label{pp:approximate}
    For all $t\in[T]$, $\scrE[o_t] = \scrE[\otild_t], \quad \text{Cov}(o_t) \preceq \text{Cov}(\otild_t).$
\end{proposition}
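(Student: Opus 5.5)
The plan is to compute both moments explicitly and compare them term by term, working conditionally on $\scrF_\tmo$ so that each $p_t^n$ is a fixed probability vector on $\{0,\dots,M\}$.

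\textbf{Means.} For the multinomial approximation, $\scrE[\otild_t]=N\bar p_t=\sum_{n=1}^N p_t^n$. This coincides with (\ref{eq:E}), so the first claim is immediate.

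\textbf{Covariances.} The multinomial covariance is
\[
\text{Cov}(\otild_t)=N\bigl(\text{diag}(\bar p_t)-\bar p_t\bar p_t^T\bigr)=\sum_{n=1}^N \text{diag}(p_t^n)-N\bar p_t\bar p_t^T .
\]
Subtracting (\ref{eq:Cov}) cancels the diagonal parts exactly, so
\[
\text{Cov}(\otild_t)-\text{Cov}(o_t)=\sum_{n=1}^N p_t^n(p_t^n)^T-N\bar p_t\bar p_t^T .
\]
The key step is to recognize this as a scatter matrix about the mean. Expanding gives
\[
\sum_{n=1}^N (p_t^n-\bar p_t)(p_t^n-\bar p_t)^T=\sum_{n=1}^N p_t^n(p_t^n)^T-2N\bar p_t\bar p_t^T+N\bar p_t\bar p_t^T,
\]
which equals the difference above. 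It is therefore a sum of rank-one positive semidefinite matrices and hence positive semidefinite. This gives $\text{Cov}(o_t)\preceq\text{Cov}(\otild_t)$ in the Loewner order.

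I do not expect a genuine obstacle. The only points needing care are the following:
\begin{itemize}
\item State clearly that both moments are taken conditionally on $\scrF_\tmo$, since the $p_t^n$ depend on past costs.
\item Note that (\ref{eq:E})--(\ref{eq:Cov}) for the PMD follow from independence of the $N$ categorical draws, so the covariance is additive across commuters.
\item Remark that equality holds if and only if all $p_t^n$ are equal, which recovers the pooled multinomial case.
\end{itemize}
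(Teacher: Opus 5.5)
Your proof is correct and follows the paper's argument essentially line for line: you match the means via $N\bar p_t=\sum_n p_t^n$, cancel the diagonal terms by linearity, and identify the remainder $\sum_n p_t^n(p_t^n)^T-N\bar p_t\bar p_t^T$ as $\sum_n (p_t^n-\bar p_t)(p_t^n-\bar p_t)^T$, a sum of rank-one positive semidefinite matrices. Your side remarks are correct additions that the paper does not state: conditioning on $\scrF_\tmo$, and equality holding exactly when all $p_t^n$ coincide.
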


The proposition indicates that this approximation matches the mean exactly but overestimates variance. To see the magnitude, consider the binary case $i\in\{0,1\}$:
\begin{equation}
    \text{Var}(\otild_t(0)) - \text{Var}(o_t(0))
    = N \text{Var}(p_t^n(0)),
\end{equation}
which vanishes as heterogeneity in individual choice probabilities decreases. The direction of the approximation error matters for inference: because the multinomial approximation overestimates variance, the resulting likelihood is more diffuse than the true PMD likelihood, which tends to produce wider and more conservative credible intervals.

\section{Simulation Studies}
\label{sec:simulation}
In this section, we evaluate the proposed estimation approach in a controlled synthetic environment. The goals are to (i) demonstrate the computation, and (ii) empirically validate the theoretical properties established earlier.

\subsection{Experiment Setup}
We generate background traffic using the standard Nguyen–Dupuis (ND) network setting \citep{nguyen1984efficient}, with four OD pairs $1\to2,1\to3,4\to2,4\to3$. Background traffic evolves according to Horowitz dynamics with zero initial valuations. To avoid convergence to a fixed equilibrium and to mimic real-world variability, we add Gaussian noise to the path valuations each day. We simulate the background dynamics for 20 days as a warm start, and we assume observations begin on day 21.

As shown in Figure \ref{fig:ND}, we then introduce a study group of commuters traveling from Node 5 to Node 11, with three feasible paths:  $5\to7\to11,$ $5\to9\to11$, and $5\to6\to10\to11$. Because this group is small relative to background traffic, we assume it does not affect the path costs generated by the background dynamics (i.e., costs are treated as exogenous for the study group).

\begin{figure}[!ht]
    \centering
    \includegraphics[width=0.45\linewidth]{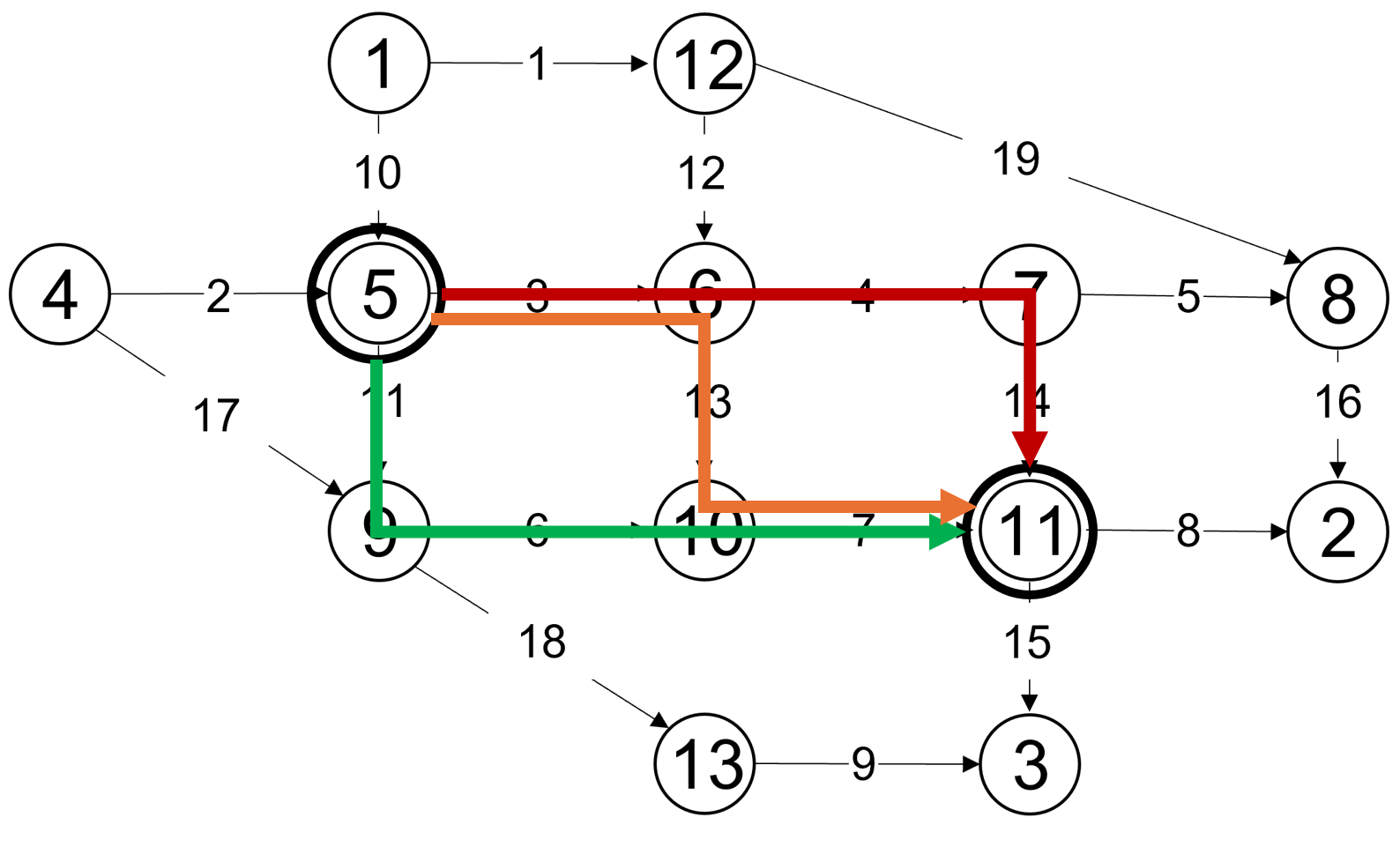}
    \caption{Simulation environment}
    \label{fig:ND}
\end{figure}

We evaluate performance using three metrics:
\begin{itemize}
\item Mean bias: measures the accuracy of the point estimation; 
\item Empirical coverage of 95\% CI: evaluates the calibration of uncertainty. An estimator may be nearly unbiased yet still produce misleading uncertainty estimates, which undermines statistical inference and downstream decision-making.
\item Width of the 95\% CI: among estimators with similar coverage, a narrower interval indicates greater information extraction and higher precision from the same data.
\end{itemize}

\subsection{Pooled Models}
\subsubsection{Estimation results}
We first conduct a parameter recovery study under the pooled model. Specifically, we draw $1{,}000$ ``true'' parameter triples $(\eta^{(s)}, \theta^{(s)}, \rho^{(s)})$ independently from the estimation prior: $\log \left(\frac{\eta^{(s)}}{1-\eta^{(s)}}\right) \sim N(0, 1.5), \log \theta^{(s)}\sim N(0, 1.0), \log \left(\frac{\rho^{(s)}}{1-\rho^{(s)}}\right) \sim N(-2.0, 1.0)$, $s=1,...,1000$.

For each parameter triple, we generate choice sequences under the fixed travel cost sequence while varying (i) the horizon length $T$ and (ii) the number of travelers $N$. For each simulated dataset, we compute posterior point estimates and corresponding 95\% intervals.

Figure \ref{fig:posterior} shows an example of the estimated joint posterior and marginals from a single run. The posterior samples appear approximately Gaussian, suggesting well-behaved inference in this setting. In this example, $\theta$ exhibits a larger deviation than $\eta$ and $\rho$, which is unsurprising because  $\theta$ has broader support and enters multiplicatively with valuations, making it harder to pin down.

\begin{figure}[!ht]
    \centering
    \includegraphics[width=0.8\linewidth]{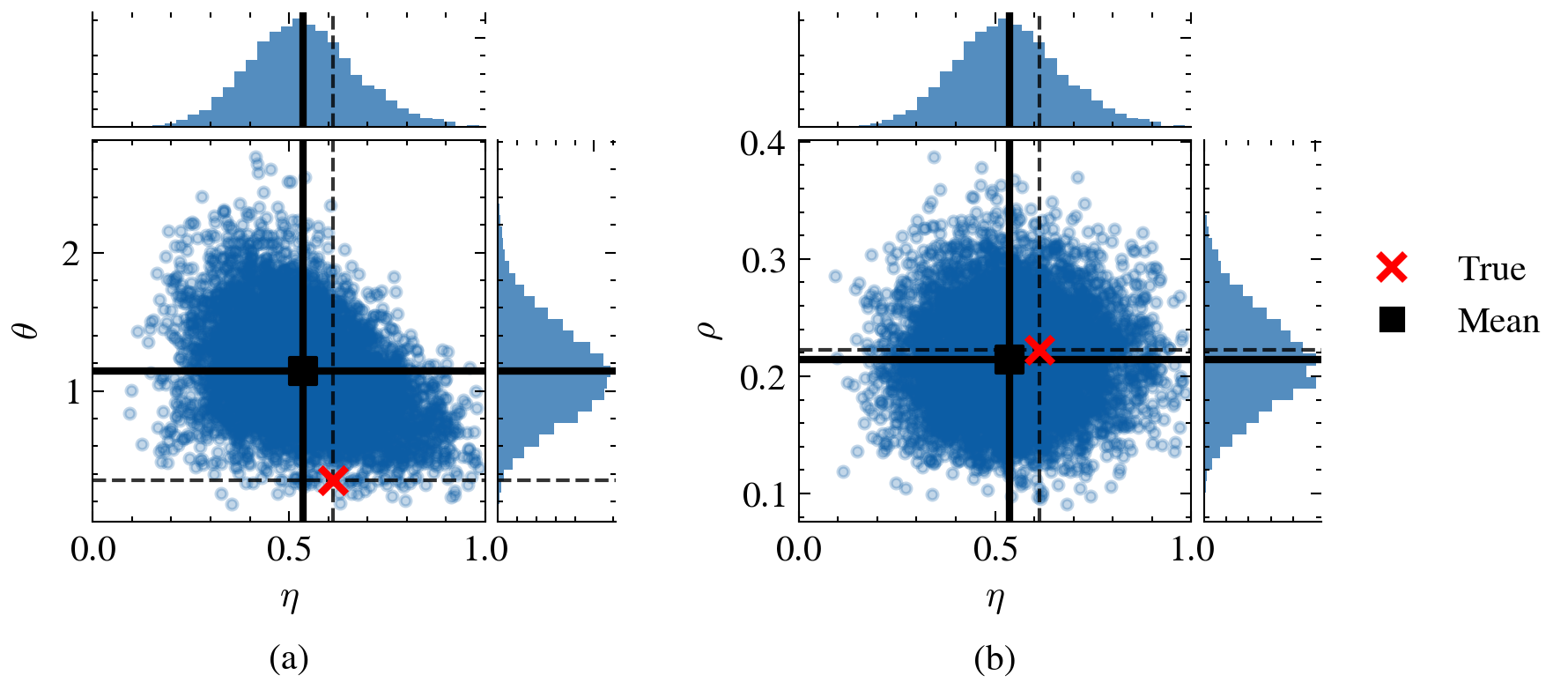}
    \caption{An example for estimated posteriors. (a): $\eta$ vs $\theta$; (b): $\eta$ vs $\rho$.}
    \label{fig:posterior}
\end{figure}

Because any single run is noisy, we summarize performance across all 1,000 trials. We report sampling diagnostics and model-fit checks in Appendix \ref{app:ss:diagnostics}. Figure \ref{fig:bias_plot} reports average bias under different settings. \blue{For the left three panels}, we fix $T=30$ and increase $N$ from 1 to 20; \blue{for the right three panels}, we fix $N=3$ and increase $T$ from 10 to 50. Overall, bias decreases as information increases, consistent with Theorem \ref{thm:consistent}. Some non-monotonicity remains due to finite-sample randomness. 

Interestingly, increasing $N$ is more effective than increasing $T$. Intuitively, longer horizons provide more observations but also propagate latent-valuation uncertainty through the recursive updates, whereas increasing $N$ provides more independent sequences at each $t$. Finally, $\rho$ tends to be easier to estimate because it affects travel or not directly and is less entangled with the other two parameters. 

\begin{figure}[!ht]
    \centering
\includegraphics[width=1\linewidth]{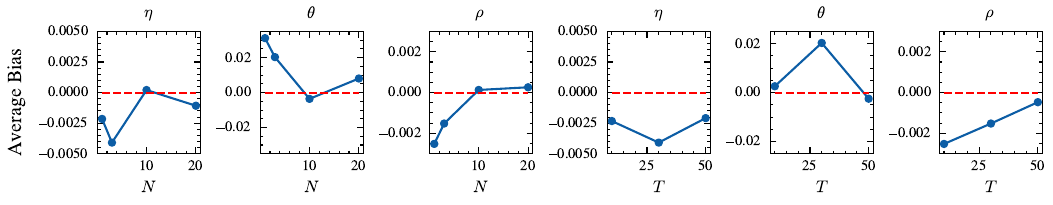}
    \caption{Average bias. Left three panels:  fix $T=30$ and vary $N$. Right three panels: fix $N=3$ and vary $T$. }
    \label{fig:bias_plot}
\end{figure}

Figure \ref{fig:coverage_plot} presents empirical coverage of 95\% CI across simulation settings. Coverage remains close to the nominal level across horizon lengths and numbers of travelers, indicating good finite-sample calibration and reliable uncertainty quantification in these settings.

\begin{figure}[!ht]
    \centering
    \includegraphics[width=0.5\linewidth]{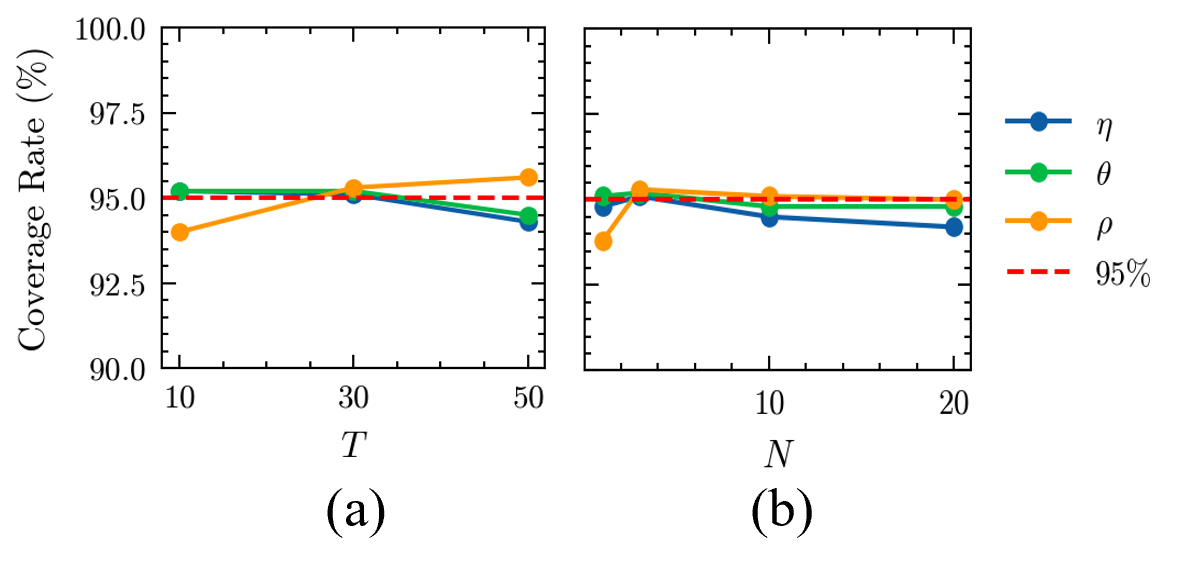}
    \caption{Coverage rate: (a) Fix $N=3$ and vary $T$; (b) Fix $T=30$ and vary $N$. }
    \label{fig:coverage_plot}
\end{figure}

Figure \ref{fig:width_plot} reports the average width of the 95\% CI. Interval widths decrease monotonically as either $T$ or $N$ increases, reflecting increasing posterior concentration and again aligning with the consistency result in Theorem \ref{thm:consistent}. This also indicates that as information accumulates, the estimator becomes more confident.

\begin{figure}
    \centering
    \includegraphics[width=0.9\linewidth]{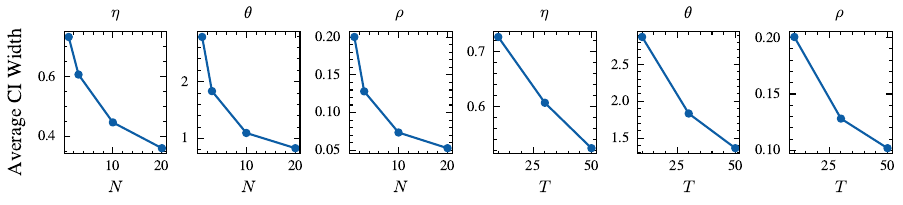}
    \caption{Average CI width. Left three panels: Fix $T=30$ and vary $N$.
    Right three panels: fix $N=3$ and vary $T$. }
    \label{fig:width_plot}
\end{figure}

\subsubsection{Robustness to misspecification}
\blue{We next examine the robustness of the pooled estimator to behavioral misspecification. Specifically, we consider two data-generating processes: the Horowitz model, representing correct specification, and the Smith model \citep{smith1984stability}, representing an alternative day-to-day adjustment mechanism. All simulated datasets are estimated using the proposed Horowitz-based model. For each data-generating process, we consider different levels of behavioral responsiveness, population sizes, and training-horizon lengths. To isolate systematic predictive error from finite-agent sampling variation, we evaluate the root mean squared error (RMSE) over a held-out horizon of $T_{test}=20$ days by comparing predicted choice probabilities with the corresponding true choice probabilities rather than with realized discrete choices. Detailed experiment settings are presented in Appendix \ref{app:ss:robust}.}

\blue{As shown in Figure \ref{fig:held-out}, under correct specification, the held-out RMSE is consistently small and decreases as more data become available. As expected, model misspecification under the Smith data-generating process produces larger prediction errors. Nevertheless, predictive accuracy improves substantially with a longer observation horizon. This pattern is also evident in the extrapolated choice probabilities shown in Figure \ref{fig:smith_extrapolate}: richer training data improves predictive performance and yields reasonably accurate extrapolation even under behavioral misspecification.}

\begin{figure}[!ht]
    \centering
    \includegraphics[width=0.8\linewidth]{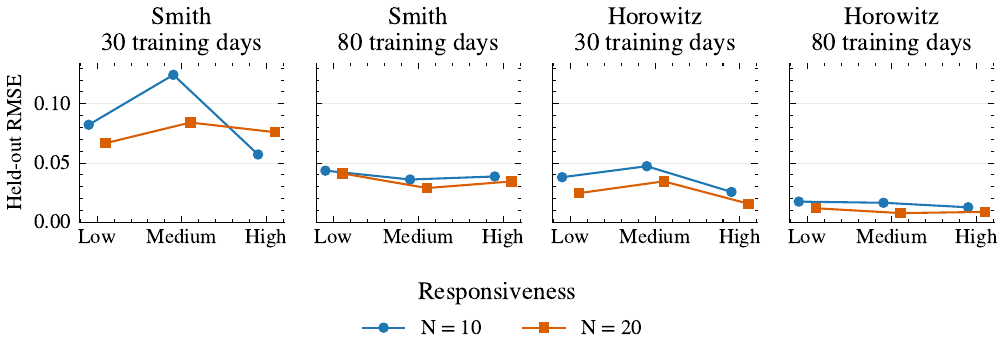}
    \caption{\blue{Held-out choice probabilities RMSE under different simulation settings.}}
    \label{fig:held-out}
\end{figure}

\begin{figure}[!ht]
    \centering
    \includegraphics[width=0.85\linewidth]{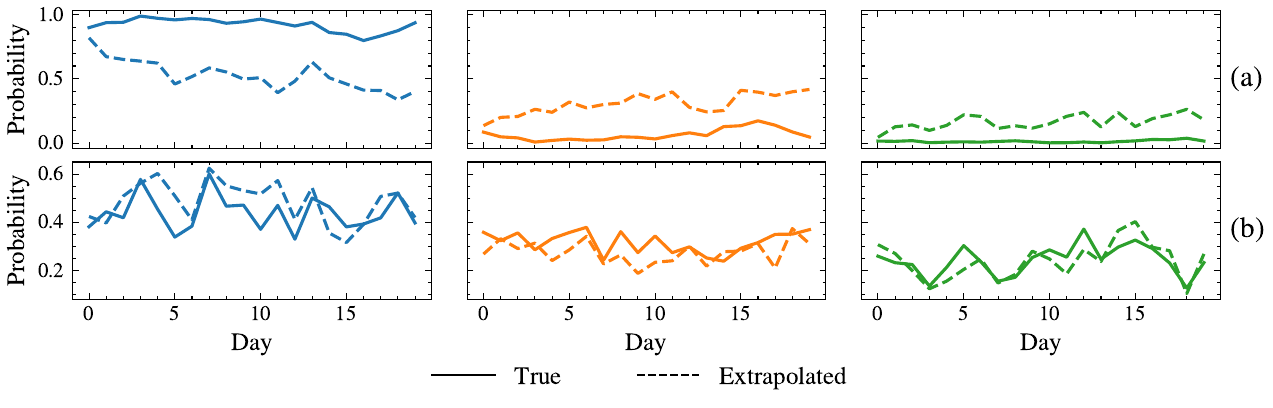}
    \caption{\blue{Extrapolated choice probabilities over $T_{test}=20$ days. (a): $N=10, T_{train}=30$; (b): $N=20, T_{train}=80$.}}
    \label{fig:smith_extrapolate}
\end{figure}

\blue{These results demonstrate that different day-to-day adjustment mechanisms can generate similar aggregate flow patterns under suitable parameter values. Therefore, a good fit under the proposed model does not, by itself, establish that travelers actually learn according to the assumed behavior. Interpretations of the estimated behavioral parameters remain conditional on the model specification. }

\blue{In addition to behavioral misspecification, we also consider (i) prior misspecification, where data are generated from parameter distributions different from the estimation priors, and (ii) behavioral heterogeneity, where data are generated from heterogeneous agents. Across these settings, misspecification increases bias and reduces coverage relative to the correctly specified case, but overall performance degradation remains moderate. Detailed results and discussions are provided in Appendix \ref{app:ss:robust}.}

\subsection{Hierarchical Models}
This section moves beyond the pooled model to directly recover population heterogeneity.

\subsubsection{Estimation results}
\label{sss:hier_results}
We conduct a hyperparameter recovery study. We draw $1{,}000$ true hyperparameter vectors from the distributions in Table \ref{tb:hyperparameter}. Condition on hyperparameters $H^{(s)}$, each traveler $n$ draws their individual parameters via $\log \left(\frac{\eta^{(s,n)}}{1-\eta^{(s,n)}}\right) \sim \scrN(\mu_\eta^{(s)},\sigma_\eta^{(s)}), \log \theta^{(s,n)}\sim \scrN(\mu_\theta^{(s)},\sigma_\theta^{(s)}), \log \left(\frac{\rho^{(s,n)}}{1-\rho^{(s,n)}}\right) \sim \scrN(\mu_\rho^{(s)},\sigma_\rho^{(s)})$, and then generates choices under the fixed cost sequence.
\begin{table}[!ht]
\centering
\begin{tabular}{llll}
\hline
\multicolumn{2}{l}{$\scrN(\mu,\sigma)$} & \multicolumn{2}{l}{$Half\scrN(\sigma)$} \\ \hline
$\mu_\eta^{(s)} $          & $(-1.5, 0.5)$        & $\sigma_\eta^{(s)}$             & $(0.5)$          \\
$\mu_\theta^{(s)}$         & $(0, 0.5)$           & $\sigma_\theta^{(s)}$           &$(0.5)$          \\
$\mu_\rho^{(s)}$           & $(-2,1)$            & $\sigma_\rho^{(s)}$             & $(1)$            \\ \hline
\end{tabular}
\caption{Generating distributions of hyperparameters}
\label{tb:hyperparameter}
\end{table}

Figures \ref{fig:hier_bias_N} and \ref{fig:hier_bias_T} report average bias for the six hyperparameters under varying $N$ and $T$. In general, increasing $N$ systematically improves estimation, which is expected because a larger population provides more information about between-user dispersion. In contrast, increasing $T$ has a much smaller effect. Intuitively, longer horizons help identify each individual’s behavior more precisely, but provide limited additional information about user heterogeneity once individual parameters are already reasonably identified.

\begin{figure}[!ht]
    \centering
    \includegraphics[width=1\linewidth]{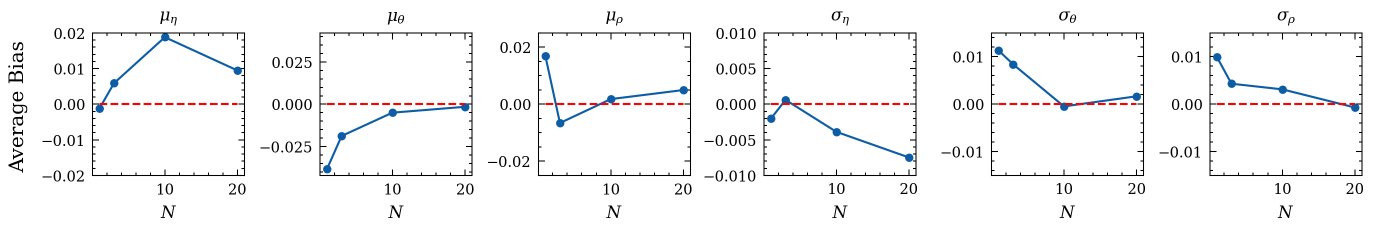}
    \caption{Average bias of hyperparameters against varying $N$.}
    \label{fig:hier_bias_N}
\end{figure}

\begin{figure}[!ht]
    \centering
    \includegraphics[width=1\linewidth]{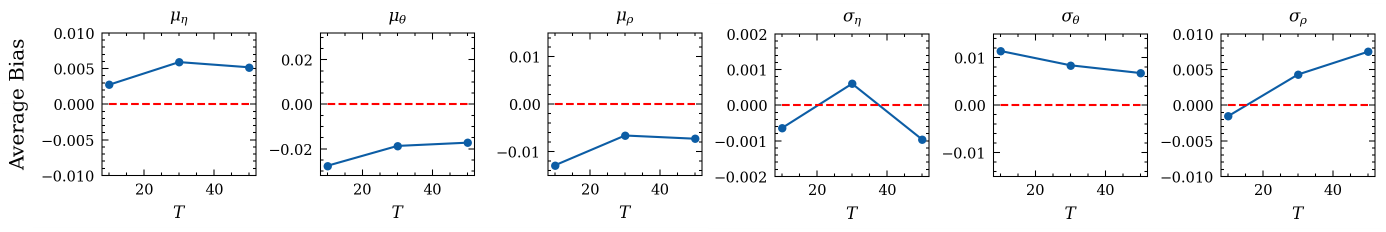}
    \caption{Average bias of hyperparameters against varying $T$.}
    \label{fig:hier_bias_T}
\end{figure}

We further examine uncertainty quantification for hierarchical parameters. Empirical coverage of 95\% credible intervals remains close to the nominal level across most settings, and interval widths decrease monotonically with increasing $N$, reinforcing that population size is the dominant source of information for learning heterogeneity. In contrast, increasing $T$ yields diminishing returns once individual-level behavior is sufficiently identified. Detailed coverage and interval-width results are reported in Appendix \ref{app:ss:hier}.

\subsubsection{Model misspecification}
We next ask whether the hierarchical estimator can recover population-level behavior when the assumed population distributions are misspecified. This matters in practice because the true distributions are rarely known.

We generate individual parameters using: $\eta^{n}\sim Beta(2, 5), \theta^{n}\sim Gamma(2,1), \rho^{n}\sim Beta(2,8)$. 
\blue{Again, we estimate the model using the first $T_{train}$ days and evaluate extrapolation accuracy for the next $T_{test}=20$ days.}

Figure \ref{fig:hier_diff_DGP_extrapolate} shows that predicting aggregate choice probabilities is relatively easy: even with $N=10$ and $T_{train}=30$, the model captures the main fluctuation patterns with minor level shifts. However, recovering the population distribution is substantially harder: with small $N$, the estimated distribution can deviate notably from the truth (Figure \ref{fig:hier_diff_DGP_posterior}(a)). As $N$ increases, the estimator observes a wider range of behaviors, and population distribution estimates improve. Increasing $T_{train}$ further improves both distribution recovery and predictive accuracy. Remarkably, for $N=50$ and $T_{train}=80$, extrapolated choice probabilities nearly overlap with the truth.

\begin{figure}[!ht]
    \centering
    \includegraphics[width=0.8\linewidth]{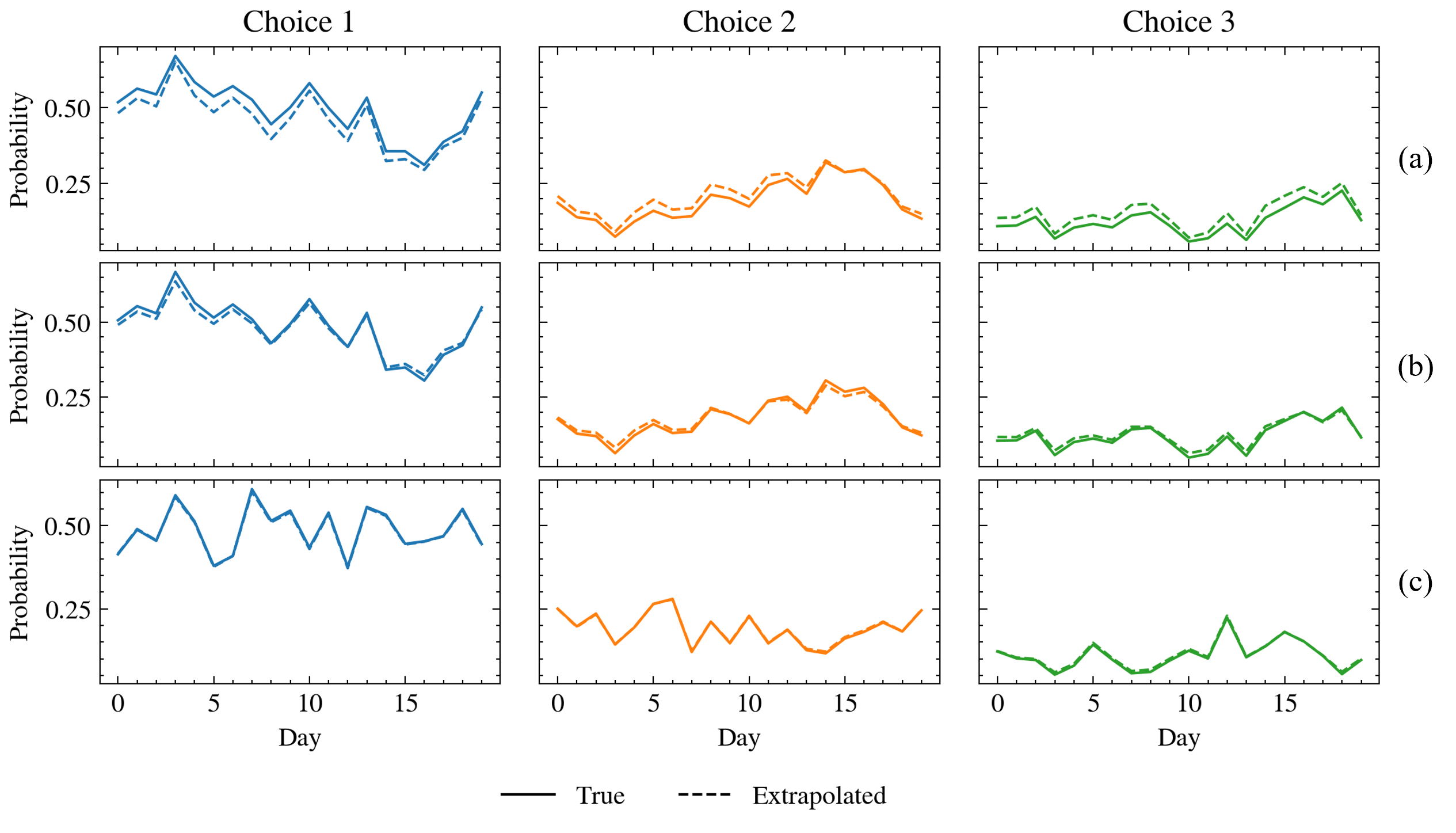}
    \caption{Extrapolated choice probabilities. (a): $N=10, T_{train}=30$; (b): $N=50, T_{train}=30$; (c): $N=50, T_{train}=80$.}
    \label{fig:hier_diff_DGP_extrapolate}
\end{figure}

\begin{figure}[!ht]
    \centering
    \includegraphics[width=0.8\linewidth]{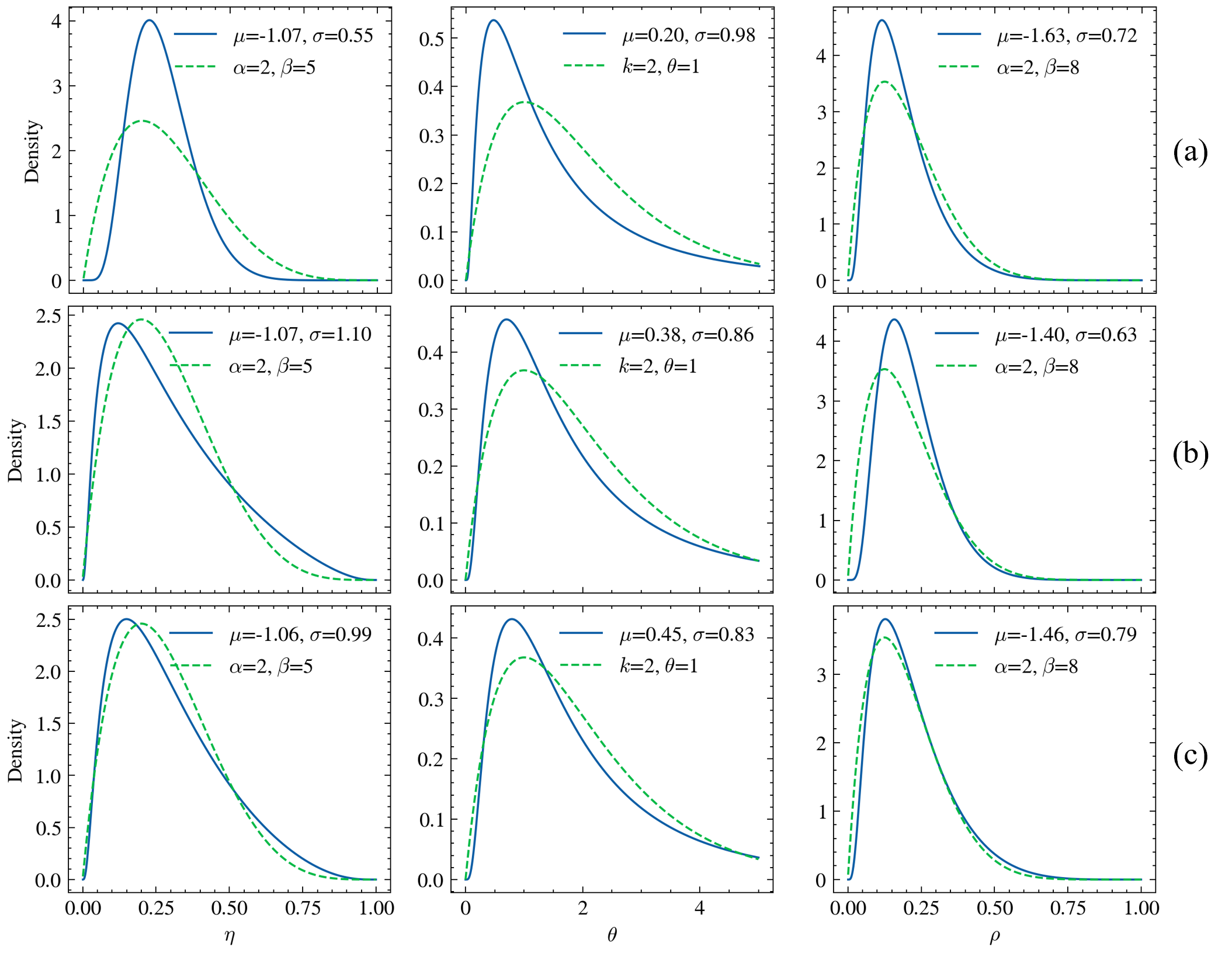}
    \caption{True population distributions versus estimated ones. (a): $N=10, T_{train}=30$; (b): $N=50, T_{train}=30$; (c): $N=50, T_{train}=80$.}
    \label{fig:hier_diff_DGP_posterior}
\end{figure}

\subsubsection{Anonymized observability}
\label{sss:hier_partial}
Because we have shown that anonymized observability does not affect the pooled model in Section \ref{sssec:partial+pool}, we focus here on the hierarchical model.

Table \ref{tb:hier_partial} reports the true values of hyperparameters and their estimates under complete versus anonymized observability. While posterior means for location parameters, such as $\mu_\eta$, are similar across settings, anonymized observability substantially underestimates dispersion (e.g., $\sigma_\eta)$. Figure \ref{fig:hier_partial} illustrates this effect using individual $\eta^n$ estimates: anonymized observability yields much less variation across individuals.

The underlying reason is the \blue{limited information about individual heterogeneity contained in aggregate count data.} Individual heterogeneity is inherently permutation-sensitive, whereas aggregate counts are permutation-invariant. As a result, groups with different within-population dispersions can generate similar aggregate behavior when their mean choice tendencies are similar. This suggests that anonymized observable data may be sufficient for predicting aggregate flows, but it provides limited behavioral insight into population-level parameter distributions.

\begin{table}[!ht]
\centering
\begin{tabular}{lllllll}
\hline
        & $\mu_\eta$ & $\sigma_\eta$ & $\mu_\theta$ & $\sigma_\theta$ & $\mu_\rho$ & $\sigma_\rho$ \\ \hline
True    & -$1.5$                                    & $0.5 $                                       & 0                                         & 1.0                                          & -2.0                                    & 1.0                                       \\
Complete observability    & -1.5                                    & 0.48                                       & 0.33                                      & 0.84                                         & -1.7                                    & 0.74                                       \\
Anonymized observability & -1.5                                    & 0.22                                       & 0.31                                      & 0.31                                         & -1.6                                    & 0.24                                       \\ \hline
\end{tabular}
\caption{Estimation of hyperparameters}
\label{tb:hier_partial}
\end{table}

\begin{figure}[!ht]
    \centering
    \includegraphics[width=0.55\linewidth]{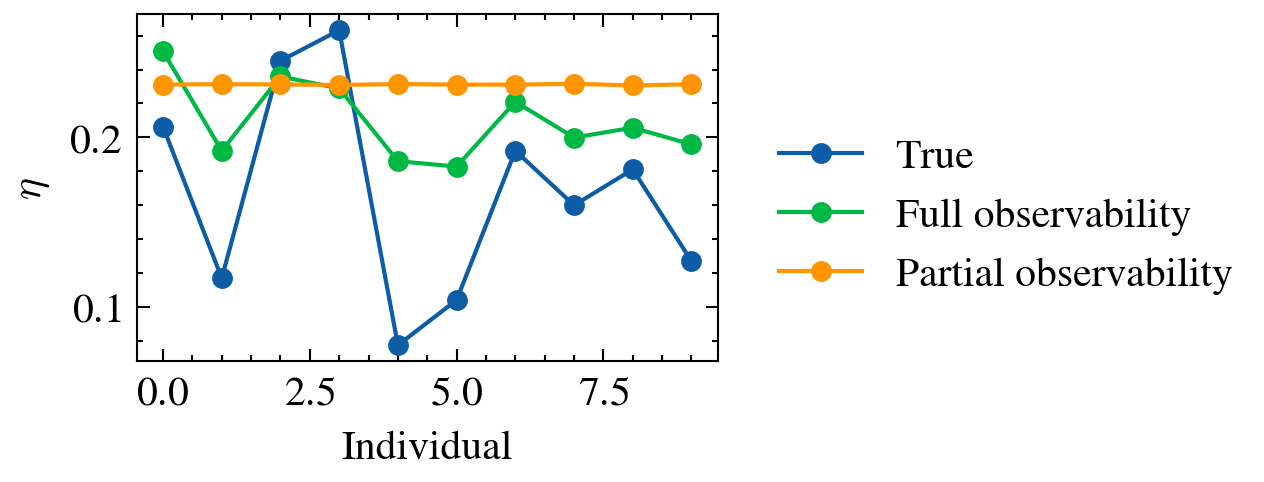}
    \caption{Estimation of individual $\eta^n$}
    \label{fig:hier_partial}
\end{figure}

\section{Empirical Analysis}
\label{sec:routing}
In this section, we examine the empirical performance of the proposed framework in both experimental and real-world settings.

\subsection{Controlled Laboratory Experiments}
We first study two controlled laboratory experiments, which allow us to assess model adequacy under different information regimes and to compare latent behavioral parameters across participant types.

\subsubsection{A study on information provision}

The first route-choice experiment is based on \citet{wijayaratna2017experimental}, illustrated in Figure~\ref{fig:exp}(a). The network follows a Braess-type structure, except that one link is stochastic: link C--B takes cost 20 with probability 20\% and cost 1 with probability 80\%. Participants complete two conditions. The first is a no-information condition, in which route choices are committed at departure, and commuters have to rely on their own experiences. The second setting incorporates information provision, in which the realization of link C--B is revealed at node C and travelers may revise their decisions.

\begin{figure}[!ht]
    \centering
    \includegraphics[width=0.7\linewidth]{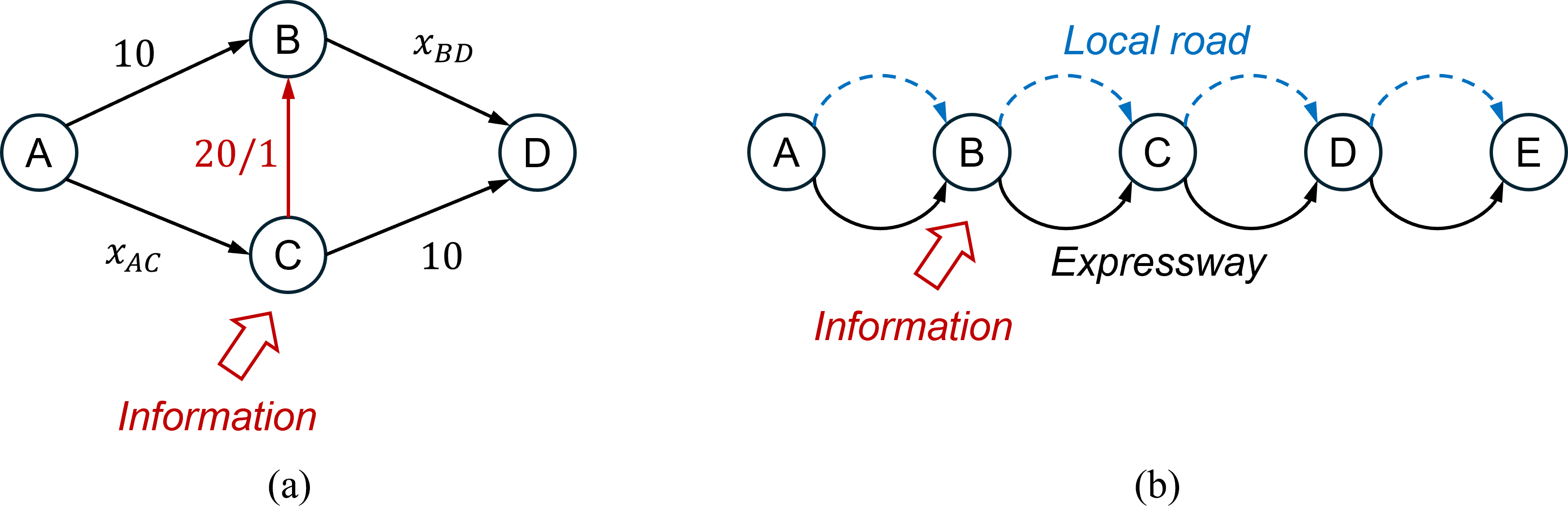}
    \caption{(a): No and en-route information. (b): Experiments with different participant types. Another OD pair faces a similar choice and shares the same expressway. We do not plot it for clarity.}
    \label{fig:exp}
\end{figure}

We begin with the no-information condition and fit the proposed model with initial perceived values all set to 0. Unlike in the simulation study, there is no ground-truth choice probability available in the laboratory data, so model assessment must rely on the realized sample path. For this reason, we focus on in-sample posterior predictive performance rather than extrapolation as in Section \ref{sec:simulation}. Specifically, for each posterior draw, we generate 500 replicated path counts over the experimental horizon and summarize the predictive distribution using the Monte Carlo mean together with 50\% and 95\% predictive intervals.

Figure~\ref{fig:wij_noinfo} in Appendix \ref{app:ss:lab} shows that, under the no-information condition, the model captures most of the observed day-to-day variation in path counts. Although finite-sample fluctuations remain substantial, the realized frequencies are generally well covered by the predictive bands. This suggests that, when travelers commit to a route, the proposed day-to-day learning model provides a reasonable explanation of the observed variation.

We then endogenize the initial perceived values, and the result is shown in Figure \ref{fig:fit_noinfo_True}. This modification improves the posterior predictive fit, suggesting an interesting insight: participants may enter the experiment with nontrivial prior perceptions rather than a completely undifferentiated initial view of the network. Moreover, the path containing the stochastic link (ACBD) tends to receive a higher estimated initial perceived cost by 1.42. While the model does not separately parameterize risk attitudes, this pattern is consistent with the possibility that participants initially discount alternatives involving uncertain costs.

\begin{figure}[!ht]
    \centering
    \includegraphics[width=0.7\linewidth]{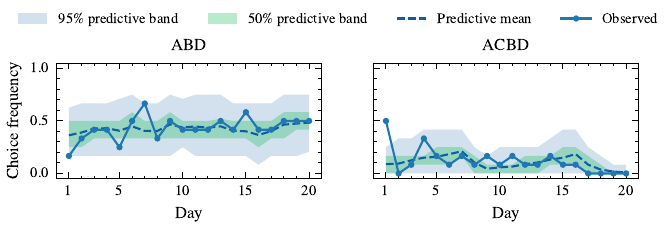}
    \caption{Posterior predictive performance for no information and endogenized initial values}
    \label{fig:fit_noinfo_True}
\end{figure}

We next fit the same class of model to the en-route information condition. As shown in Figure~\ref{fig:wij_info}, the fit deteriorates substantially: several features of the observed path-count variation are not captured by the posterior predictive distribution, especially for path ACBD when the stochastic link is realized at a high cost. By contrast, the predictive performance for path ABD remains reasonable, since this path is not meaningfully affected by the en-route information. The natural interpretation is that information provision alters the underlying behavioral mechanism in a way that is not well captured by a purely day-to-day updating framework. Adequately representing behavior under information provision may therefore require a richer model that incorporates both day-to-day and intra-day learning.

\begin{figure}[!ht]
    \centering
    \includegraphics[width=0.85\linewidth]{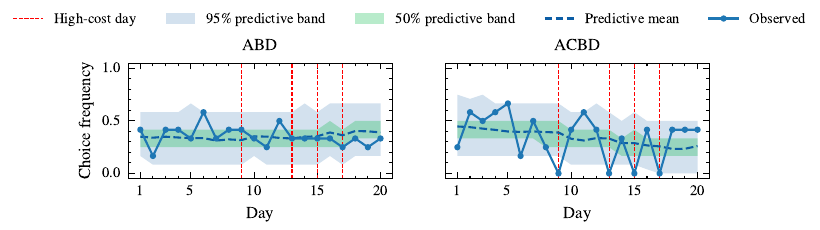}
    \caption{Posterior predictive performance for information provision and endogenized initial values}
    \label{fig:wij_info}
\end{figure}

\subsubsection{A study on behaviors across participant types}
We next turn to the experiment of \citet{wang2025comparing}, illustrated in Figure~\ref{fig:exp}(b). \blue{Their design provides a valuable benchmark for studying behavioral differences across participant types, as it evaluates humans, large language models, and reinforcement learning (RL) agents within a common sequential decision environment.} In this \blue{novel experiment}, participants, including human participants, \blue{GPT-4o}, GPT-3.5, and RL agents, traverse a route consisting of four successive segments. On each segment, they choose between two alternatives, and after completing each segment they receive congestion information from the preceding link. This creates a richer adaptive setting with repeated feedback and within-trip adjustment opportunities. \blue{We use this experiment primarily to demonstrate the inferential capability of the proposed framework. All conclusions are specific to this setting and should not be interpreted as broader claims about whether LLMs exhibit human-like learning behavior.}

In the main analysis, we represent these repeated decisions at the finer segment level, which better matches the feedback structure of the experiment. A comparison with a coarser path-level representation is reported in Appendix~\ref{app:ss:lab}. We fit the model separately to datasets generated by different participant types. Since initial values are less sensitive for longer horizons, we fix them in these experiments.

To compare behavioral differences across participant types, we adopt the Region of Practical Equivalence (ROPE), a well-established Bayesian hypothesis testing approach designed to assess whether an effect is large enough to be practically meaningful \citep{kruschke2010bayesian, kruschke2014doing}. Because the parameter $\theta$ is scale-dependent and thus less directly comparable across settings, we focus our inference on whether the learning rates are meaningfully different. A key challenge is that $\eta$ does not have a linear effect on behavioral outcomes. For example, increasing $\eta$ from 0.05 to 0.1 doubles the updating weight on new information, whereas increasing $\eta$ from 0.55 to 0.6 is comparatively minor. Following \cite{kruschke2018rejecting}, we instead test on the logit scale. For example, when comparing human and \blue{GPT-4o}, we define $\phi=\logit(\eta_{human})-\logit(\eta_{GPT4})$, and set ROPE to $[-0.1, 0.1]$. Equivalently,
\begin{equation}
    \exp(\phi)=\frac{\eta_{human}/(1-\eta_{human})}{\eta_{GPT4}/(1-\eta_{GPT4})} \in [0.905, 1.105].
\end{equation}
Intuitively, $\eta/(1-\eta)$ reflects the relative weight placed on today’s experience versus prior beliefs under exponential smoothing. Hence, this ROPE corresponds to treating differences of roughly $\pm10\%$ in the new-versus-old weighting as practically negligible. 

Figure~\ref{fig:agent_compare} summarizes the corresponding posterior contrasts. The results reveal systematic differences across groups. In panel (a), which compares humans with \blue{GPT-4o, more than 97\% of the posterior mass lies below $-0.1$. This provides strong posterior evidence that GPT-4o has a meaningfully higher learning rate than human participants, indicating greater sensitivity to recently experienced costs.} In panel (b), the posterior contrast with GPT-3.5 is shifted in the opposite direction, suggesting that humans exhibit a higher learning rate than GPT-3.5. By contrast, the posterior in panel (c), which compares humans with reinforcement-learning agents, is fairly centered around 0. However, only 2\% of the posterior mass lies within the ROPE, so the two groups cannot be regarded as practically equivalent. At the same time, the posterior does not concentrate strongly on either side, so the \blue{direction of} difference between humans and reinforcement-learning agents is not clearly distinguishable under the current data.

\begin{figure}[!ht]
    \centering
    \includegraphics[width=0.8\linewidth]{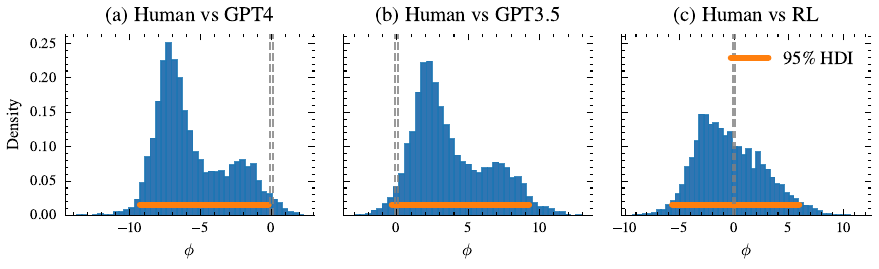}
    \caption{Posterior of $\phi=\logit(\eta_{human})-\logit(\eta_{participant})$, where $participant$ refers to GPT-4o, GPT-3.5, and RL agents}
    \label{fig:agent_compare}
\end{figure}

These differences are also reflected in the posterior predictive performance shown in Figure~\ref{fig:rep_freq_comparison} in Appendix~\ref{app:ss:lab}, where GPT-4o exhibits substantially greater volatility than the other three participant types, and are broadly consistent with the findings in \cite{wang2025comparing}. \blue{Our analysis provides an additional inferential perspective by placing all participant types within a common behavioral model and comparing an interpretable latent adjustment parameter. In this way, the proposed framework provides a principled basis for assessing whether observed differences in choice dynamics correspond to meaningful differences in underlying behavioral parameters.}

\subsection{Real-world Trajectory Data}
We now turn to trajectory data from Ann Arbor, Michigan, to examine commuter behavior in a real-world setting.

\subsubsection{Dataset and preprocessing}


The Ann Arbor network has 632 road junctions and 1,583 road segments. The road features and network skeleton were sourced from a benchmark trip-based travel demand forecast model developed for the {SEMCOG} 2050 Regional Forecast \citep{semcog_regional_forecast}. Vehicle trajectory data were collected during the evening peak (4:30--5:30 PM) from March 1 to May 31, 2022, capturing stable travel patterns within the academic semester. Following the map-matching methodology in \cite{wang2023trajectory}, the sample yielded an average of 3,127 daily trips, which were utilized to calculate link features (such as link travel time). To identify main traffic patterns, we included OD pairs observed for at least 14 days. 

From all valid OD pairs in the dataset, we select three pairs that feature meaningfully distinct routes. We exclude OD pairs whose routes differ only marginally, such as two nearly symmetric ways to traverse a square, where observed choices are largely driven by noise or improvisation rather than systematic learning. Figure \ref{fig:od_exp} illustrates one of the selected OD pairs; the remaining two are presented in Appendix \ref{app:ss:traj}.

In the OD pair shown, commuters travel from the university hospital area to the interchange of highways M-14 and I-94 when exiting Ann Arbor. Two clearly differentiated routes are available: a highway route (red) and a local-road route through downtown (blue). Summary statistics for this OD pair are reported in Table \ref{tb:od_exp}. The highway route has a lower average travel time, with slightly higher variability, and is chosen more frequently. Over the 66-day observation period, at most 10 commuters per day are observed for this OD pair. We therefore fix the total daily demand at 10 for this OD pair, as the demand parameter $\rho$ would not be identifiable without this normalization.

\begin{figure}[!ht]
    \centering
    \includegraphics[width=0.7\linewidth]{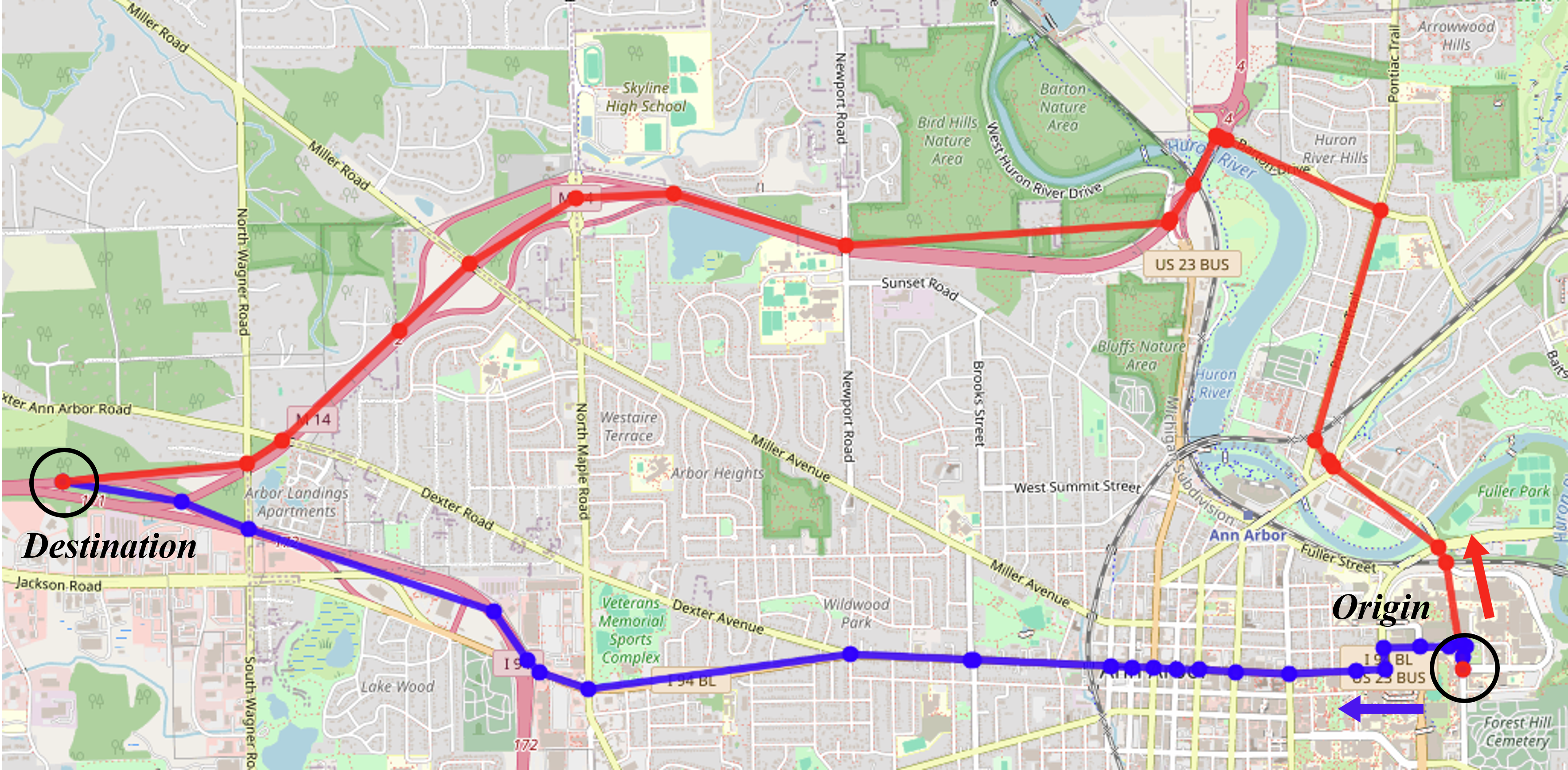}
    \caption{One of the selected OD pairs}
    \label{fig:od_exp}
\end{figure}

\begin{table}[!ht]
\centering
\begin{tabular}{ccccc}
\hline
     & \multicolumn{2}{c}{Local} & \multicolumn{2}{c}{Highway} \\
     & Cost (min)      & Counts       & Cost (min)        & Counts        \\ \hline
Mean &  11.90          &  0.94            & 11.02            &  2.06             \\
Std  &  0.98          &  1.10            &  1.00           & 1.63              \\ \hline
\end{tabular}
\caption{Statistics of the selected OD pair}
\label{tb:od_exp}
\end{table}

\subsubsection{Pooled model esitmation}

We first estimate the pooled model to recover population-average behavioral parameters. The results are reported in Table \ref{tb:estimates}, where $\delta_{od}$ denotes the initial value difference for OD pair $od=0,1,2$.

One notable feature of real-world data is substantial day-to-day demand variation. In this dataset, each commuter has more than a 60\% probability of not traveling on a given weekday. Such flexibility, likely influenced by remote work, is rarely present in lab experiments but is central in modern commuting behavior. The estimated negative value of $\delta_0$ indicates that commuters initially perceive the highway route as more attractive than the local route for the OD pair in Figure \ref{fig:od_exp}, which is broadly consistent with the observed cost statistics. 

\begin{table}[!ht]
\centering
\begin{tabular}{ccccccc}
\hline
          & $\eta$ & $\theta$ & $\rho$ & $\delta_0$ & $\delta_1$ & $\delta_2$ \\ \hline
Estimates & 0.024               & 0.39                  & 0.61                & -2.4                         & 0.066                        & 2.1                          \\ \hline
\end{tabular}
\caption{Estimation results}
\label{tb:estimates}
\end{table}

Given this result, it is natural to ask whether this day-to-day learning is practically meaningful. We therefore test whether the estimated $\eta$ is meaningfully different from 0. Because $\logit(0)$ is not well-defined, we assess practical equivalence of $\eta$ directly using the ROPE $[0,\epsilon]$. We choose $\epsilon=0.693/66$, which corresponds approximately to an half-life of exponential smoothing over 66 days, the length of the observation period. This is a fairly large threshold for practical equivalence: it treats as negligible even learning rates for which the initial belief is reduced by half over the course of the study. Figure \ref{fig:delta_eta} shows that more than 83\% of the posterior mass lies above this ROPE. \blue{Thus, most of the posterior support favors a learning rate that exceeds this practical-equivalence threshold, providing evidence that day-to-day updating is behaviorally relevant in the observed commuting data.}
\begin{figure}[!ht]
    \centering
    \includegraphics[width=0.35\linewidth]{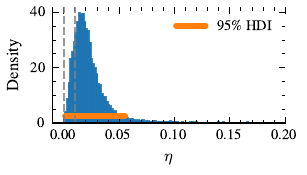}
    \caption{Empirical posterior of real-world commuters' learning rate}
    \label{fig:delta_eta}
\end{figure}

\subsubsection{Hierarchical model and behavior dispersion}
Although we showed in Section \ref{sss:hier_partial} that anonymized observability weakens identification of heterogeneity, we nevertheless estimate the hierarchical model to gain qualitative insight into the distribution of behaviors across the population. Figure \ref{fig:hierarchical_GM_distributions} reports the implied population distributions of individual parameters based on the estimated hyperparameters. As discussed earlier, dispersion may be underestimated due to anonymized observability. Key findings include:
\begin{itemize}
\item Learning rate $\eta$: highly concentrated between 0 and 0.1. Nearly all commuters place less than 10\% weight on new daily information relative to accumulated beliefs. It indicates that some proportion of commuters are doing meaningful day-to-day updating, but strong inertia persists and makes updates very gradual.
\item Costs sensitivity $\theta$: moderately dispersed, with most mass between 0.1 and 0.7. This indicates meaningful variation in responsiveness to perceived cost differences. 
\item Demand parameter $\rho$: exhibits the widest dispersion. Only a small fraction commute nearly every weekday, while more than half of commuters stay home at least half of the time. This highlights substantial flexibility and the importance of modeling demand variability explicitly when analyzing real-world commuter behavior.
\end{itemize}

\begin{figure}[!ht]
    \centering
    \includegraphics[width=0.65\linewidth]{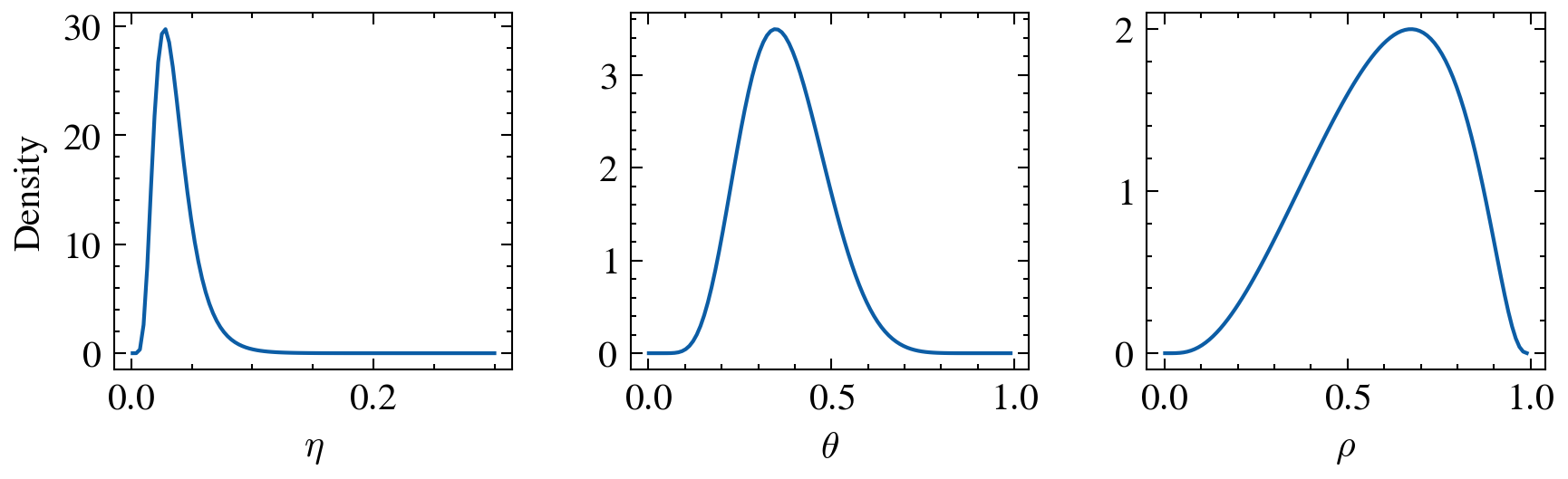}
    \caption{Population distributions of individual parameters}
    \label{fig:hierarchical_GM_distributions}
\end{figure}

\section{Conclusion}\label{sec:conclusion}
This paper develops a Bayesian inference framework for day-to-day route choice behavior that moves beyond descriptive calibration to support uncertainty quantification, hypothesis testing, and rigorous behavioral analysis. The central theoretical contributions are formal identifiability and consistency results that clarify when and how behavioral parameters can be recovered from observations, properties that prior calibration-based approaches neither establish nor require. The framework accommodates demand variation, user heterogeneity through a hierarchical extension, and anonymized observability motivated by privacy constraints on trajectory data.

Simulation studies confirm that the estimator achieves good finite-sample performance: bias shrinks systematically as the observation horizon grows, and credible intervals are well-calibrated across a range of parameter configurations. Importantly, robustness analyses show that aggregate flow predictions and population distribution estimates remain stable under moderate model misspecification. Hence, the framework degrades gracefully when behavioral assumptions are imperfectly met, which is a necessary condition for practical applicability. 

The empirical analysis shows that the proposed framework can generate novel behavioral insights from both experimental and real-world data. In controlled laboratory experiments, it reveals that the day-to-day learning model explains much of the observed variation under no-information conditions. Once en-route information is introduced, however, the same model becomes inadequate, indicating that richer information changes the decision process in ways not well captured by a purely inter-day updating mechanism. A second laboratory application recovers systematic behavioral differences across participant types: humans appear less reactive than \blue{GPT-4o and more reactive than GPT-3.5, while the comparison with reinforcement-learning agents does not yield a clearly directional difference.}  In the real-world Ann Arbor data, the results provide substantial evidence of meaningful day-to-day learning, while also revealing considerable demand variations.

\blue{The behavioral inference framework can also provide actionable insights for transportation system management. For example, the estimated parameters can serve as behavioral inputs to adaptive congestion-pricing or route-guidance policies. In particular, the relatively low learning rates estimated from the Ann Arbor data suggest that the effects of a new toll or information intervention may emerge gradually over multiple days. Evaluating such an intervention only during its initial implementation period could therefore understate its longer-term behavioral effects.}


Several directions remain for future research. First, the inference structure developed here, exponential smoothing of perceived costs combined with a logit choice rule, is not specific to route choice. With appropriate reformulation, the same framework can be applied to other behavioral dynamics, such as departure time adjustment or mode choice adaptation with appropriate reformulation. Second, while the hierarchical model provides qualitative insight into heterogeneity, stronger identification of population distributions under anonymized observability remains an open theoretical problem. It may require richer data sources, alternative model designs, or a full mixture identifiability argument that we have left for future work. \blue{Finally, extending the inferential framework to explicitly model day-to-day behavior in the presence of navigation and real-time information systems would further bridge the gap between theoretical day-to-day dynamics and contemporary travel behavior.}

%
%
%
\section*{Acknowledgement}
The work described in this paper was partly supported by research grants from National Science Foundation, United States (CMMI-2233057 and 2240981). The authors thank Xinhe Wang, Naichen Shi, and Ran Sun for valuable feedback.

\bibliography{reference}

\section*{Appendix A. Proof of Main Results}
\label{app:proof}
\subsection*{Proof for Proposition \ref{pp:horowitz}.}
As the valuations $V_t^n(i)$ follow the exact same updating process as in Horowitz dynamics, we only need to consider the choice probability, which results in path flow following the observation $O_t$, detailed in Section \ref{sssec:partial+pool}. 

As discussed in that section, $O_t=(O_t(1),...,O_t(M))\sim \text{Multinomial}(N,p_t)$, its expectation follows:
\begin{equation}
    \scrE[O_t] = N p_t = N \cdot  softmax(V_t),
\end{equation}
which matches the path flow in Horowitz dynamics.

The path flow observation satisfies
\begin{equation}
    O_t(i) = \sum_{n=1}^N \textbf{1}\left\{X_t^n=i\right\},
\end{equation}
and $X_t^n$ is i.i.d categorical with $p_t$. Thus, by the strong law of large numbers, 
\begin{equation}
    \frac{1}{N} O_t(i) \asto p_t(i), \ \forall i\in[M],
\end{equation}
which proves the proposition. $\square$

\subsection*{Proof for Theorem \ref{thm:basic}.}
First, the following lemma indicates that we only need to prove identifiability for a system with only one commuter.
\begin{lemma}
\label{lm:one-for-all}
If for any $n\in[N]$, the model is identifiable if 
\begin{equation}
\label{eq:thm1-single-match}
    P(X^n_{1:T}=x^n_{1:T}|\eta, \theta, \rho,c_\fullT) = P(X_{1:T}^n=x^n_{1:T}|\eta', \theta', \rho',c_\fullT), \ \forall x_{1:T}^n\in [M]^T 
\end{equation}
indicates $\eta=\eta',\theta=\theta',\rho=\rho'$.

\end{lemma}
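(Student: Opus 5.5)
The plan is to show that equality of the joint law of all $N$ commuters' sequences forces equality of the law of any single commuter's sequence. The single-commuter identifiability hypothesis then finishes the argument. The mechanism is marginalization, using the product structure in Equation~(\ref{eq:singleOD}).

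Suppose
\begin{equation*}
P(X_\fullT=x_\fullT|\eta,\theta,\rho,c_\fullT)=P(X_\fullT=x_\fullT|\eta',\theta',\rho',c_\fullT)
\end{equation*}
for every $x_\fullT\in\{0,1,\ldots,M\}^{T\times N}$. Fix a commuter $n$ and a single-commuter sequence $x^n_\fullT$. I would sum both sides over all configurations of the remaining commuters, i.e. over $x^{m}_\fullT$ for $m\neq n$. By Equation~(\ref{eq:singleOD}), the joint probability is $\prod_{m}\prod_t p_t^m(x_t^m)$. For each $m\neq n$, the factor $\prod_t p_t^m(x_t^m)$ summed over $x^m_\fullT$ equals one, because it is a probability distribution on $\{0,\ldots,M\}^T$: each $p_t^m(\cdot)$ sums to one, and the valuations $V_t^m$ depend only on the exogenous costs $c_{1:t-1}$, not on past choices. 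Hence the marginal of $X^n_\fullT$ is exactly $P(X^n_\fullT=x^n_\fullT|\cdot,c_\fullT)$ under each parameter. Equality of the joint laws therefore gives Equation~(\ref{eq:thm1-single-match}) for every $x^n_\fullT$, and the hypothesis of the lemma yields $\eta=\eta'$, $\theta=\theta'$, $\rho=\rho'$.

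One notational wrinkle needs handling. Equation~(\ref{eq:thm1-single-match}) ranges over $[M]^T$, whereas the support includes the virtual path $0$. I would read it as ranging over the full support $\{0,\ldots,M\}^T$. Alternatively, if the restricted range is intended, the argument still holds, since equality on all of $\{0,\ldots,M\}^T$ trivially implies equality on the subset $[M]^T$.

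The only step needing care, and the main obstacle, is justifying that each other commuter's factor marginalizes to one. This rests on the valuation recursion (\ref{eq:individual_1}) being driven by the exogenous costs alone, so that the summation over $x^m_\fullT$ factorizes day by day. In the pooled model the commuters share parameters, so the argument applies uniformly to every $n$.
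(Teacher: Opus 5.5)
Your proof is correct, but it reaches Equation~(\ref{eq:thm1-single-match}) by a different route than the paper.

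The paper never sums over the other commuters. It evaluates the joint equality at the single ``diagonal'' configuration in which every commuter $j$ repeats commuter $n$'s sequence, $x^j_{1:T}=x^n_{1:T}$. It then uses the pooled assumption that all commuters share one choice distribution to write both sides as $P(X^n_{1:T}=x^n_{1:T}\mid\cdot)^N$, and takes nonnegative $N$-th roots. That argument evaluates at one point and needs no normalization step. However, it depends on the commuters being identically distributed, i.e.\ having the same parameters and the same initial valuations.

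Your marginalization over the other $N-1$ commuters uses only two facts: the joint law factorizes into per-commuter factors, and each factor is a normalized distribution on $\{0,\ldots,M\}^T$. It does not need the commuters to be identically distributed. So it would go through unchanged for commuters with different initial valuations, or for the multi-OD product~(\ref{eq:multipleOD}). In the multi-OD case the diagonal substitution does not directly apply, because commuters of different OD pairs face different path sets and costs.

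One minor refinement: the normalization of each factor would survive even if valuations depended on a commuter's own past choices, since a product of sequential conditionals always sums to one. What exogenous costs really buy you is the factorization across commuters in Equation~(\ref{eq:singleOD}).

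Your handling of the $[M]^T$ versus $\{0,\ldots,M\}^T$ wrinkle is also sound.
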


\textit{Proof for Lemmea \ref{lm:one-for-all}.}

Suppose we have the identifiability for a model with only one commuter. When the following equation holds:
\begin{equation}
    P(X_{1:T}=x_{1:T}|\eta, \theta,\rho,c_{1:T}) = P(X_{1:T}=x_{1:T}|\eta', \theta',\rho',c_{1:T}), \ \forall x_{1:T}\in[M]^{T\times N},
\end{equation}
it must also hold for the case where all commuters replicate the choice of commuter $n$. Therefore, 
\begin{equation}
    \prod_{j=1}^NP(X^j_{1:T}=x^n_{1:T}|\eta, \theta, \rho,c_\fullT) = \prod_{j=1}^NP(X^j_{1:T}=x^n_{1:T}|\eta', \theta', \rho',c_\fullT), \ \forall x_{1:T}^n\in [M]^T.
\end{equation}

In the pooled model, all commuters share the same choice distribution, hence this equation implies that Equation (\ref{eq:thm1-single-match}) also holds. Therefore, $\eta=\eta',\theta=\theta',\rho=\rho'$. $\blacksquare$

Let us now consider the system with $N=1$ and omit the commuter superscript.
Suppose that two parameter sets $(\eta,\theta,\rho)$ and
$(\eta',\theta',\rho')$ are observationally equivalent. Denote the perceived cost under the two parameter sets as $\{V_t(i)\}_{t\in[T], i\in[M]}$ and $\{V'_t(i)\}_{t\in[T], i\in[M]}$, respectively. Equality of the
non-travel probabilities immediately implies $\rho=\rho'$.
Moreover, equality of the route-choice probabilities implies equality of the
corresponding log-odds:
\begin{equation}
\label{eq:master}
    \theta\Delta V_t(i,j)
    =
    \theta'\Delta V_t'(i,j),
    \qquad
    t=1,\ldots,T,\quad i,j\in[M].
\end{equation}

Define the first informative day as
\begin{equation}
    K=\min\left\{
    t\in\{1,\ldots,T-2\}:
    \Delta c_t(i,j)\neq\Delta V_1(i,j)
    \text{ for some }i,j\in[M]
    \right\}.
\end{equation}
Assumption~\ref{ass:rich} ensures that $K$ exists. If $K=1$, $\Delta V_K(i,j) = \Delta V_K'(i,j)=\Delta V_1(i,j)$ for all route pairs. Otherwise, by the definition of $K$,
\begin{equation}
    \Delta c_t(i,j)=\Delta V_1(i,j),
    \qquad t<K,\quad i,j\in[M],
\end{equation}
which also implies
    $\Delta V_K(i,j)=\Delta V_K'(i,j)=\Delta V_1(i,j)$
for all route pairs based on the exponential learning process.

We distinguish two cases. First, suppose that
$\Delta V_1(a,b)\neq0$ for some pair $(a,b)$. Applying
Equation~(\ref{eq:master}) at $t=1$ gives $\theta=\theta'$. Choose a pair
$(m,m')$ satisfying 
  $  \Delta c_K(m,m')\neq\Delta V_1(m,m').$
For this pair, the updating equation gives
\begin{align}
    \Delta V_{K+1}(m,m')
    &=
    \Delta V_1(m,m')
    +\eta\left[
    \Delta c_K(m,m')-\Delta V_1(m,m')
    \right],\\
    \Delta V_{K+1}'(m,m')
    &=
    \Delta V_1(m,m')
    +\eta'\left[
    \Delta c_K(m,m')-\Delta V_1(m,m')
    \right].
\end{align}
Equation~(\ref{eq:master}) at $t=K+1$, together with
$\theta=\theta'$, therefore implies $\eta=\eta'$.

Second, suppose that $\Delta V_1(i,j)=0$ for all route pairs. For the
informative pair $(m,m')$, we then have
\begin{equation}
    \Delta V_{K+1}(m,m')=\eta\Delta c_K(m,m'),
    \qquad
    \Delta V_{K+1}'(m,m')=\eta'\Delta c_K(m,m').
\end{equation}
Because $\Delta c_K(m,m')\neq0$, Equation~(\ref{eq:master}) at $t=K+1$
implies 
   $ \theta\eta=\theta'\eta'.$
Denote this common positive value by $R$. On day $K+2$,
\begin{align}
    \theta\Delta V_{K+2}(m,m')
    &=
    R\left[
    (1-\eta)\Delta c_K(m,m')
    +\Delta c_{K+1}(m,m')
    \right],\\
    \theta'\Delta V_{K+2}'(m,m')
    &=
    R\left[
    (1-\eta')\Delta c_K(m,m')
    +\Delta c_{K+1}(m,m')
    \right].
\end{align}
Equation~(\ref{eq:master}) at $t=K+2$ consequently implies
\begin{equation}
    R(\eta'-\eta)\Delta c_K(m,m')=0.
\end{equation}
Since $R>0$ and $\Delta c_K(m,m')\neq0$, it follows that
$\eta=\eta'$, and hence $\theta=\theta'$.

In both cases, observational equivalence implies
$(\eta,\theta,\rho)=(\eta',\theta',\rho')$. Because $K\leq T-2$, both
$K+1$ and $K+2$ lie within the observed horizon. Therefore, the model is
identifiable under Assumption~\ref{ass:rich}. $\square$

\subsection*{Proof for Theorem \ref{thm:consistent}.}
Lemma \ref{lm:one-for-all} indicates that we only need to prove identifiability for a system with only one commuter.
Let $\psi=(\eta,\theta)$.
Denote the choice probability as 
\begin{equation}
    s_{t}(\psi, m):=\frac{e^{-\theta V_t(m;\eta,\theta)}}{\sum_{k=1}^Me^{-\theta V_t(k;\eta,\theta)}}, \ m\in[M].
\end{equation}
Define the full choice probability
\begin{equation}
    p_t(\phi,x):=\begin{cases}
        \rho, \ & x=0,\\
        (1-\rho) s_t(\psi,m), \ & x\in[M].
    \end{cases}
\end{equation}
Define a random function $p_{t}(\phi,X_t)$ for the random variable $X_t$, which takes value of $p_t(\phi,x)$ for each possible choice $x\in \{0,1,...,M\}$. Define a general log-likelihood function for the random sequence
\begin{equation}
    l_T(\phi,X_\fullT)=\sum_{t=1}^T\log p_{t}(\phi,X_t),
\end{equation}
which is also a random function.

Define $
    Z_{t}(\phi, X_t)=\log p_{t}(\phi,X_t)-\log p_{t}(\phi_0,X_t),$
which can be rewritten as 
\begin{equation}
    Z_{t}(\phi, X_t)= \scrE[Z_{t}(\phi,X_t)|\scrF_\tmo] + \underbrace{Z_{t}(\phi, X_t) - \scrE[Z_{t}(\phi,X_t)|\scrF_\tmo]}_{\text{Denoted as } D_{t}(\phi,X_t)}.
\end{equation}

The centered value can be rewritten as:
\begin{align}
    \scrE[Z_{t}(\phi,X_t)|\scrF_\tmo] &= \sum_{m=1}^M p_t(\phi_0,m) [\log p_t(\phi,m)-\log p_t(\phi_0,m)] \\
    &=-\KL (p_t(\phi_0)\Vert p_t(\phi)),
\end{align}
where $p_t(\phi)=\left\{p_{t}(\phi,x)\right\}_{x\in\{0,1,...,M\}}$ denotes the choice distribution vector. 

Define $A\sube=\left\{\phi\in\Theta:\Vert \phi-\phi_0\Vert \geq \epsilon \right\}$. 
The posterior can be written as
\begin{align}
    P(A\sube|X_\fullT)&=\frac{\int_{A\sube}e^ {l_T(\phi, X_\fullT)} p(\phi) d\phi}{\int_{\Theta}e^{l_T(\phi, X_\fullT)} p(\phi) d\phi} \\
    &= \frac{\int_{A\sube} e^{l_T(\phi,X_\fullT)-l_T(\phi_0,X_\fullT)} p(\phi) d\phi}{\int_{\Theta} e^{l_T(\phi,X_\fullT)-l_T(\phi_0,X_\fullT)} p(\phi) d\phi} \label{eq:posterior}
\end{align}
and our goal is to prove the numerator decays faster than the denominator as $T\to \infty$. 

We first prove a lemma regarding $D_{t}(\phi,X_t)$. 

\begin{lemma}
\label{lm:point_converge}
For any $\phi\in\Theta$,
\begin{equation}
    \frac{1}{T} \sum_{t=1}^T D_t(\phi, X_t) \asto 0 \quad \mathrm{as} \ T\to\infty
\end{equation}
\end{lemma}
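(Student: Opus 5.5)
The plan is to show that $\{D_t(\phi,X_t)\}_{t\ge1}$ is a martingale difference sequence with uniformly bounded conditional second moments, and then to invoke a strong law of large numbers for martingales.

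\emph{Martingale difference property.} By construction,
\begin{equation*}
\scrE[D_t(\phi,X_t)\mid\scrF_\tmo]=0 .
\end{equation*}
Moreover, $D_t$ is measurable with respect to the $\sigma$-algebra generated by $\scrF_\tmo$ and $X_t$. The perceived costs $V_t(\cdot)$ under both $\phi$ and $\phi_0$ are deterministic functions of the cost history $c_{1:t-1}$ and the parameters, so they are $\scrF_\tmo$-measurable. Hence the partial sums $S_T=\sum_{t=1}^T D_t$ form a martingale with respect to the filtration $\mathcal{G}_t=\sigma(\scrF_t, X_{1:t})$. If $\scrF$ is understood to include the past choices, nothing changes.

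\emph{Uniform boundedness.} This is the step where the specific model structure enters. Fix $\phi\in\Theta$. Since $V_1=0$, each $V_t(i)$ is a convex combination of past costs, so $|V_t(i)|\le C$ for all $t$ and $i$. Therefore, for $m\in[M]$,
\begin{equation*}
s_t(\psi,m)\ge \frac{e^{-\theta C}}{M e^{\theta C}}=\frac{e^{-2\theta C}}{M}.
\end{equation*}
Combined with $\rho\in[\rho_{min},\rho_{max}]\subset(0,1)$, this bounds every $p_t(\phi,x)$ below by the constant
\begin{equation*}
\underline{p}=\min\Bigl\{\rho_{min},\,(1-\rho_{max})\,\frac{e^{-2\theta_{max}C}}{M}\Bigr\}>0 .
\end{equation*}
The same bound holds for $\phi_0\in\Theta$. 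It follows that $|Z_t|\le 2\log(1/\underline{p})=:B$, and hence $|D_t|\le 2B$ almost surely, uniformly in $t$. The key point to verify is that the bound on costs propagates through the exponential smoothing recursion. This is immediate from the convex-combination form $V_{t+1}=(1-\eta)^tV_1+\sum_{s\le t}\eta(1-\eta)^{t-s}c_s$. If a nonzero initialization $V_1$ were used, the bound would simply become $\max(C,\|V_1\|_\infty)$.

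\emph{Strong law.} Because $\sum_t \scrE[D_t^2]/t^2\le 4B^2\sum_t t^{-2}<\infty$, the martingale $\sum_t D_t/t$ is $L^2$-bounded. By the martingale convergence theorem it converges almost surely. Kronecker's lemma then gives
\begin{equation*}
\frac{1}{T}\sum_{t=1}^T D_t(\phi,X_t)\asto 0 .
\end{equation*}
Equivalently, one can cite the martingale SLLN directly, for example Hall and Heyde, Theorem 2.18. Each of these steps is standard once the uniform bound is established.

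I expect no real obstacle for this pointwise statement. The genuine difficulty comes later, when upgrading from pointwise to uniform convergence over $\Theta$, which is needed to control the integral over $A\sube$ in (\ref{eq:posterior}). I would anticipate handling that by a Lipschitz bound on $\phi\mapsto\log p_t(\phi,x)$, uniform in $t$, together with a finite $\epsilon$-net, but that argument is not required here.
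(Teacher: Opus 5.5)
Your proposal is correct and follows essentially the same route as the paper. The paper also bounds $|V_t|\le C$, deduces a uniform lower bound on $p_t(\phi,x)$ and hence bounded $Z_t$ and $D_t$, and then applies the martingale strong law using $\sum_t \scrE[D_t^2\mid\scrF_{t-1}]/t^2<\infty$; your Kronecker-lemma argument and the explicit filtration $\sigma(\scrF_t,X_{1:t})$ simply unpack what the paper cites as ``the strong law of martingale differences.''
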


\textit{Proof for Lemma \ref{lm:point_converge}}
First, it is easy to show that $V_t(m;\eta,\theta)$ is uniformly bounded. Let $|V_t(m;\eta,\theta)|\leq C$, then 
\begin{equation}
    s_t(\psi,m)\geq \frac{e^{-2\theta_{max}C}}{M}.
\end{equation}
Thus, the full choice probability is also uniformly bounded away from 0:
\begin{equation}
    p_t(\phi,x) \geq \min\left\{\rho_{min}, (1-\rho_{max}) \frac{e^{-2\theta_{max}C}}{M} \right\}>0.
\end{equation}
Therefore, $\log p_{t}(\phi,x)$ is also uniformly bounded, which further indicates that $Z_{t}(\phi, X_t)$ is also uniformly bounded. Denote the bound as $K$, i.e., $\vert Z_t(\phi, X_t)\vert <K$, and thus $|D_{t}(\phi,X_t)|<2K$ for all $t\in[T], \phi\in\Theta$ and $X_t\in[M]$.

As $\scrE[D_t(\phi,X_t)|\scrF_\tmo]=0$, $\{D_t(\phi,X_t),\scrF_\tmo\}$ is a martingale difference sequence. 
In addition,
\begin{equation}
    D_{t}^2(\phi,X_t)=Z_{t}(\phi,X_t)^2+\scrE[Z_{t}(\phi,X_t)|\scrF_\tmo]^2 -2 Z_{t}(\phi,X_t) \cdot \scrE[Z_{t}(\phi,X_t)|\scrF_\tmo].
\end{equation}
Its expectation satisfies:
\begin{equation}
    \scrE[D_{t}^2(\phi,X_t)|\scrF_\tmo] = \scrE[Z_{t}(\phi,X_t)^2|\scrF_\tmo] - \scrE[Z_{t}(\phi,X_t)|\scrF_\tmo]^2 \leq 2K^2.
\end{equation}
Therefore, 
\begin{equation}
    \sum_{t=1}^\infty \frac{ \scrE[D_{t}^2(\phi,X_t)|\scrF_\tmo]}{t^2}< \infty 
\end{equation}

By the strong law of martingale differences, we have
\begin{equation}
    \frac{1}{T} \sum_{t=1}^T D_{t}(\phi,X_t) \to 0, \quad a.s.
\end{equation}
as $T\to\infty$.
$\blacksquare$

To simplify notations, let us denote $G_T(\phi, X_\fullT) = \frac{1}{T}\sum_{t=1}^TD_t(\phi,X_t)$, for any $T$. This random function can be proved to be uniformly Lipschitz continuous. 
\begin{lemma}
\label{lm:lipschitz}
There exists a constant $L$, such that for any $\phi,\phi'\in \Theta$, $\vert G_T(\phi,X_\fullT)-G_T(\phi',X_\fullT)\vert \leq L\Vert\phi-\phi'\Vert$ for all $T$ and realization of $X_\fullT$.
\end{lemma}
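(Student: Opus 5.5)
The plan is to prove a Lipschitz bound for each summand $D_t(\phi,X_t)$, with a constant independent of $t$, of $\phi\in\Theta$, and of the realization. Averaging then preserves the same constant:
\[
\vert G_T(\phi)-G_T(\phi')\vert \le \frac{1}{T}\sum_{t=1}^T \vert D_t(\phi,X_t)-D_t(\phi',X_t)\vert .
\]
Since $D_t(\phi,X_t)=Z_t(\phi,X_t)-\scrE[Z_t(\phi,X_t)|\scrF_\tmo]$ and $\log p_t(\phi_0,X_t)$ cancels in differences, it suffices to show that $\phi\mapsto\log p_t(\phi,x)$ is $L_0$-Lipschitz on $\Theta$, uniformly in $t$ and $x\in\{0,\dots,M\}$. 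The conditional expectation is a convex combination over $x$ with weights $p_t(\phi_0,x)$, so it inherits the same constant. This gives $L=2L_0$.

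For the uniform Lipschitz bound I will use the mean value theorem on the convex box $\Theta$, which reduces the task to bounding the gradient of $\log p_t(\phi,x)$ uniformly.

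The $\rho$-derivatives are easy: they are $1/\rho$ or $-1/(1-\rho)$. Both are bounded because $\rho\in[\rho_{min},\rho_{max}]\subset(0,1)$.

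For $\theta$, write $\log s_t(\psi,m)=-\theta V_t(m)+\log\sum_k e^{-\theta V_t(k)}$. This gives
\[
\partial_\theta \log s_t = -V_t(m)+\sum_k s_t(\psi,k)V_t(k),
\]
which is bounded by $2C$, using $|V_t|\le C$ as in the proof of Lemma~\ref{lm:point_converge}.

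For $\eta$,
\[
\partial_\eta \log s_t=-\theta\,\partial_\eta V_t(m)+\theta\sum_k s_t(\psi,k)\,\partial_\eta V_t(k),
\]
so I need $\sup_t|\partial_\eta V_t(m)|<\infty$ uniformly over $\eta\in[\eta_{min},\eta_{max}]$.

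This is the main obstacle, because $V_t$ is defined recursively and a naive bound on the derivative could grow with $t$. Differentiating the recursion~(\ref{eq:individual_1}) gives
\[
W_{t+1}=(1-\eta)W_t+(c_t-V_t), \qquad W_t:=\partial_\eta V_t,\quad W_1=0 .
\]
Since $|c_t-V_t|\le 2C$, induction gives $|W_t|\le 2C\sum_{k\ge0}(1-\eta)^k\le 2C/\eta_{min}$. This uses $\eta_{min}>0$ from condition~1 of Theorem~\ref{thm:consistent}, which is exactly why the parameter space must be compact and bounded away from $0$. I also need the bound $|V_t|\le C$ itself, which holds because $V_t$ is a convex combination of $V_1$ and past costs, with $|V_1|\le C$ after possibly enlarging $C$.

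Combining the pieces, $\Vert\nabla_\phi\log p_t(\phi,x)\Vert\le L_0$, where
\[
L_0=\sqrt{\left(\tfrac{1}{\rho_{min}}+\tfrac{1}{1-\rho_{max}}\right)^2+(2C)^2+\left(\tfrac{4C\theta_{max}}{\eta_{min}}\right)^2}.
\]
This bound holds for every $t$ and every $x$, and it does not depend on the realization. The reason is that $V_t$ depends only on the exogenous costs, and $X_t$ only selects which coordinate $x$ is evaluated. Putting this together with the reduction in the first paragraph proves the lemma with $L=2L_0$.
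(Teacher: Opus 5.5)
Your proposal is correct and follows the paper's proof essentially step for step. Both reduce the claim to a uniform bound on $\Vert\nabla_\phi \log p_t(\phi,x)\Vert$ via the mean value theorem on the convex box $\Theta$: the $\phi_0$ terms cancel, and the conditional expectation inherits the constant as a convex combination. Both bound $\partial_\eta V_t$ by $2C/\eta_{min}$ through the differentiated smoothing recursion and conclude with $L=2L_0$; your $\rho$-term is only slightly looser than the paper's $\max\{1/\rho_{min},1/(1-\rho_{max})\}$.

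One cosmetic slip: the log-sum-exp term in your expression for $\log s_t(\psi,m)$ should enter with a minus sign. The $\theta$-derivative you then write down is nonetheless the correct one.
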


\textit{Proof for Lemma \ref{lm:lipschitz}}
It is easy to see that the value function is uniformly bounded, i.e., $|V_t(m;\eta)|\leq C$ for all $t,m,\eta$. We further have
\begin{equation}
    \frac{\partial V_\tpo(m;\eta)}{\partial \eta} = (1-\eta) \frac{\partial V_t(m;\eta)}{\partial \eta} -V_t(m;\eta) + c_t(m), \quad \forall t,m,\eta
\end{equation}

Let us prove $\left\vert \frac{\partial V_t(m;\eta)}{\partial \eta}\right\vert\leq \frac{2C}{\eta_{min}}$ by induction on $t$. This holds for $t=1$ automatically. Suppose it also holds for $t=k$:
\begin{align}
    \left\vert\frac{\partial V_{k+1}(m;\eta)}{\partial \eta} \right\vert &\leq (1-\eta) \left\vert\frac{\partial V_t(m;\eta)}{\partial \eta}\right\vert +2C \leq \frac{2C}{\eta_{min}}.
\end{align}
Therefore, it also holds for $t=k+1$, which proves the claim.

Now, let us prove that the  $\log s_t(\psi,k)$ has a bounded gradient. The log-likelihood can be written as:
\begin{equation}
    \log s_t(\psi, k) = -\theta V_t(k;\eta) -\log \sum_{m=1}^M e^{-\theta V_t(m;\eta)}, \quad k\in[M], t\in[T].
\end{equation}

Therefore,
\begin{align}
    \frac{\partial}{\partial \theta} \log s_t(\psi, k)&=-V_t(k;\eta)-\frac{\sum_{m=1}^M e^{-\theta V_t(m;\eta)} (-V_t(m;\eta))}{\sum_{m=1}^M e^{-\theta V_t(m;\eta)}} \\
    &= -V_t(k;\eta) + \sum_{m=1}^M V_t(m;\eta) s_t(\psi,m) ,
\end{align}
which indicates that 
\begin{equation}
    \left\vert\frac{\partial}{\partial \theta} \log s_t(\psi, k) \right\vert \leq | V_t(k;\eta)| + \sum_{m=1}^M  s_t(\psi,m) |V_t(m;\eta)| \leq 2C.
\end{equation}

Meanwhile,
\begin{align}
    \frac{\partial}{\partial \eta} \log s_t(\psi, k)&=-\theta \frac{\partial}{\partial \eta}V_t(k;\eta)-\frac{\sum_{m=1}^M e^{-\theta V_t(m;\eta)}\frac{\partial}{\partial \eta}(-\theta V_t(m;\eta))}{\sum_{m=1}^M e^{-\theta V_t(m;\eta)}} \\
    &= -\theta \frac{\partial}{\partial \eta}V_t(k;\eta)+\sum_{m=1}^M s_t(\psi,m) \theta \frac{\partial}{\partial \eta}V_t(m;\eta),
\end{align}
which leads to 
\begin{equation}
    \left\vert\frac{\partial}{\partial \eta} \log s_t(\psi, k) \right\vert \leq    \frac{4C\theta_{max}}{\eta_{min}}.
\end{equation}

As a result, for the full chocie probability $p_t(\phi,x)$ where $\phi=(\eta,\theta,\rho)$,
\begin{equation}
    \nabla_\phi \log p_t(\phi, x)  = \begin{cases}
        \begin{bmatrix}
            0, & 
            0, &
            \frac{1}{\rho} 
        \end{bmatrix}^T, \ & x=0,\\
        \begin{bmatrix}
        \frac{\partial}{\partial \eta} \log s_t(\phi,k), &
        \frac{\partial}{\partial \theta} \log s_t(\phi,k), &
        -\frac{1}{1-\rho}
    \end{bmatrix}^T, \ & x\in[M].
    \end{cases} 
\end{equation}
Let $B_\rho = \max \{1/\rho_{min}, 1/(1-\rho_{max})\}$, we have
    $\Vert \nabla_\phi \log p_t(\phi,x)\Vert \leq C_L$,
where 
\begin{equation}
    C_L = \sqrt{(2C)^2 + \left({4C\theta_{max}}/{\eta_{min}}\right)^2 + B_\rho^2},
\end{equation}
which further indicates that $\Vert \nabla_\phi Z_t(\phi,m)\Vert \leq C_L$ for all $t\in[T], x\in\{0,1,...,T\}$. 

As $\Theta$ is convex, based on the Mean Value Theorem, for any fixed realization of $X_t=x$, 
\begin{equation}
    \vert Z_t(\phi,x)-Z_t(\phi',x)\vert \leq C_L \Vert\phi-\phi'\Vert, \ \forall \phi,\phi'\in\Theta.
\end{equation}

Because $X_t$ takes vales in a finite set, and the gradient is uniformly bounded for any $x$, we have
\begin{equation}
    \vert Z_t(\phi,X_t)-Z_t(\phi',X_t) \vert \leq C_L\Vert\phi-\phi'\Vert, \ \forall \phi,\phi'\in\Theta.
\end{equation}

Meanwhile, 
\begin{align}
    |\scrE[Z_t(\phi,X_t)|\scrF_\tmo]-\scrE[Z_t(\phi',X_t)|\scrF_\tmo] | & = \left\vert \sum_{x=0}^M Z_t(\phi,x) p_t(\phi_0,x) -\sum_{x=0}^M Z_t(\phi',x)p_t(\phi_0,x) \right\vert \\
    &\leq C_L\Vert \phi-\phi'\Vert , \quad \forall \phi,\phi'\in \Theta,
\end{align}
which indicates $\vert D_t(\phi,X_t)-D_t(\phi',X_t)\vert \leq 2C_L\Vert \phi-\phi'\Vert$.

Finally,
\begin{equation}
    \vert G_T(\phi, X_\fullT)-G_T(\phi', X_\fullT)\vert \leq \frac{1}{T} \sum_{t=1}^T \vert D_t(\phi, X_t)-D_t(\phi', X_t)\vert \leq 2C_L\Vert\phi-\phi'\Vert, \quad \forall \phi,\phi'\in\Theta. \ \blacksquare
\end{equation}

The Lipschitz continuity further ensures that the convergence is not only pointwise as in Lemma \ref{lm:point_converge} but also uniform.
\begin{lemma}
\label{lm:uniform_converge}
\begin{equation}
    \sup_{\phi\in\Theta} \left\vert G_T(\phi, X_\fullT) \right\vert\asto 0 \quad \mathrm{as} \ T\to\infty.
\end{equation}
\end{lemma}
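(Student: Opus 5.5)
We will deduce uniform convergence from pointwise convergence (Lemma~\ref{lm:point_converge}) and equicontinuity (Lemma~\ref{lm:lipschitz}) by a finite-net argument, which works because $\Theta$ is compact.

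\emph{Step 1: build a countable grid of nets.} By assumption~1 of Theorem~\ref{thm:consistent}, $\Theta$ is a compact box in $\mathbb{R}^3$. For each rational $\delta>0$ (or $\delta=1/k$, $k\in\mathbb{N}$), we fix a finite $\delta$-net $\{\phi_1^{(\delta)},\ldots,\phi_{J(\delta)}^{(\delta)}\}\subset\Theta$. A regular grid works, with $J(\delta)=O(\delta^{-3})$. Collecting these over all $k$ gives a countable set $\Theta^\ast$.

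\emph{Step 2: a single probability-one event.} For each $\phi\in\Theta^\ast$, Lemma~\ref{lm:point_converge} supplies an event $\Omega_\phi$ of probability one on which $G_T(\phi,X_\fullT)\to0$. Set $\Omega^\ast=\bigcap_{\phi\in\Theta^\ast}\Omega_\phi$. Because $\Theta^\ast$ is countable, $P(\Omega^\ast)=1$. This countable intersection is the only point where care is needed: pointwise a.s.\ convergence cannot be intersected over the uncountable set $\Theta$ directly, so the argument must be routed through the nets.

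\emph{Step 3: transfer from the net to all of $\Theta$.} Fix an outcome in $\Omega^\ast$ and $\varepsilon>0$. Choose $k$ with $L/k<\varepsilon/2$, where $L$ is the constant from Lemma~\ref{lm:lipschitz}. Any $\phi\in\Theta$ lies within $1/k$ of some net point $\phi_j^{(1/k)}$. The bound in Lemma~\ref{lm:lipschitz} holds for every realization and every $T$, so
\begin{equation*}
\sup_{\phi\in\Theta}|G_T(\phi,X_\fullT)|\le \max_{j\le J(1/k)}|G_T(\phi_j^{(1/k)},X_\fullT)| + L/k .
\end{equation*}
The maximum runs over finitely many net points. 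On $\Omega^\ast$ each term tends to zero, so the maximum falls below $\varepsilon/2$ for all large $T$. Hence $\limsup_T\sup_{\phi\in\Theta}|G_T|\le\varepsilon$ on $\Omega^\ast$. Since $\varepsilon>0$ was arbitrary, $\sup_{\phi\in\Theta}|G_T(\phi,X_\fullT)|\to0$ on $\Omega^\ast$, which is almost sure convergence.

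\emph{Measurability.} The supremum over $\Theta$ may be replaced by the supremum over a countable dense subset, because $G_T$ is continuous in $\phi$ by Lemma~\ref{lm:lipschitz}. The supremum is therefore a random variable.

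Beyond the countable-intersection step, nothing else is an obstacle: the argument rests entirely on the Lipschitz constant being uniform in both $T$ and the realization, which is exactly what Lemma~\ref{lm:lipschitz} provides.
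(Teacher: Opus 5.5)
Your proof is correct and follows the paper's route: compactness of $\Theta$ gives a finite $\delta$-net, Lemma~\ref{lm:point_converge} gives almost sure convergence at the net points, and the realization-uniform Lipschitz bound of Lemma~\ref{lm:lipschitz} passes this to the supremum. Your version is somewhat more careful than the paper's. You fix a single probability-one event, intersecting over the countable union of nets for $\delta=1/k$, before letting $\varepsilon\to0$; the paper fixes $\epsilon$ first and leaves both the randomness of $\tilde{T}$ and the countable intersection over $\epsilon$ implicit.
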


\textit{Proof for Lemma \ref{lm:uniform_converge}}
Based on the Lipschitz continuity in Lemma \ref{lm:lipschitz}, for any $\epsilon>0$, there exists $\delta$ such that for all $\phi,\phi'$ with $\Vert \phi-\phi'\Vert<\delta$, $|G_T(\phi, X_\fullT)-G_T(\phi',X_\fullT)|<\epsilon/2$.

Since $\Theta$ is compact, it is separable, which means that it can be covered with a finite number of open balls of radius $\delta$. Denote the centers as $\left\{\phi_1,...,\phi_K\right\}$. Thus, for any $\phi\in \Theta$, it is within distance $\delta$ from some center $\phi_j$.

For any $i\in[K]$, from Lemma \ref{lm:point_converge}, $G_T(\phi_i, X_\fullT)\asto 0$ as $T\to\infty$. Therefore, there exists $T_i$ such that for all $T\geq T_i$, $|G_T(\phi_i, X_\fullT)|<\epsilon/2$ almost surely. Let $\Tilde{T} = \max_{i\in[K]} T_i$, thus for all $T\geq \Tilde{T}$, $|G_T(\phi_j, X_\fullT)|<\epsilon/2$ almost surely for all $j\in[K]$. 

Consequently, for any $\phi\in\Theta$, find closest center $\phi_j$ where $\Vert \phi-\phi_j\Vert<\delta$, then for all $T\geq \Tilde{T}$:
\begin{align}
    |G_T(\phi, X_\fullT)| &\leq |G_T(\phi,X_\fullT)-G_T(\phi_j,X_\fullT)| + |G_T(\phi_j, X_\fullT)| \leq \epsilon  \quad \mathrm{a.s.}
\end{align}

Note that $\Tilde{T}$ is independent of $\phi$, therefore
\begin{equation}
    \sup_{\phi\in\Theta} |G_T(\phi,X_\fullT)|<\epsilon \ \text{ for } T\geq \Tilde{T} \quad \mathrm{a.s.},
\end{equation}
which proves the uniform convergence. 
$\blacksquare$

Now we are ready to prove the posterior consistency.
\begin{lemma}
\label{lm:posterior_consistent}
Under conditions in Theorem \ref{thm:consistent}, for all $\epsilon>0$, 
\begin{equation}
    P(A\sube|X_\fullT)\asto 0 \quad \text{as } T\to\infty.
\end{equation}
\end{lemma}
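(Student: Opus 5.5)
Starting from the ratio representation (\ref{eq:posterior}), I will bound the numerator from above and the denominator from below on the exponential scale $e^{T(\cdot)}$, and show that their exponents are separated by a fixed positive gap. The decomposition $Z_t=\scrE[Z_t|\scrF_\tmo]+D_t$ gives
\begin{equation*}
    \frac{1}{T}\bigl(l_T(\phi,X_\fullT)-l_T(\phi_0,X_\fullT)\bigr) = -\frac{1}{T}\sum_{t=1}^T \KL\bigl(p_t(\phi_0)\Vert p_t(\phi)\bigr) + G_T(\phi,X_\fullT).
\end{equation*}
By Lemma \ref{lm:uniform_converge}, $\sup_{\phi\in\Theta}|G_T(\phi,X_\fullT)|\to 0$ almost surely. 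I will therefore fix a realization in the probability-one event where this uniform convergence holds and where the liminf in Assumption \ref{ass:persist} holds, and argue deterministically along that realization.

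\emph{Numerator.} For $\phi\in A\sube$, Assumption \ref{ass:persist} gives $\frac{1}{T}\sum_t \KL\geq \kappa(\epsilon)/2$ uniformly over $A\sube$ for all large $T$. Combined with $\sup|G_T|<\kappa(\epsilon)/8$, the integrand is at most $e^{-3T\kappa(\epsilon)/8}$. Hence the numerator is bounded by $e^{-3T\kappa(\epsilon)/8}$, using that the prior integrates to at most one.

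\emph{Denominator.} I restrict the integral to a small ball $B_r(\phi_0)\subset\Theta$, which exists because $\phi_0$ is an interior point. Here I need the average KL to be uniformly small on $B_r(\phi_0)$. The same gradient bounds as in Lemma \ref{lm:lipschitz} show that $\log p_t(\phi,x)$ is $C_L$-Lipschitz in $\phi$ uniformly in $t$ and $x$. This gives
\begin{equation*}
    \KL\bigl(p_t(\phi_0)\Vert p_t(\phi)\bigr)=\sum_x p_t(\phi_0,x)\bigl[\log p_t(\phi_0,x)-\log p_t(\phi,x)\bigr]\leq C_L\Vert\phi-\phi_0\Vert\leq C_L r.
\end{equation*}
I choose $r$ so that $C_Lr\leq\kappa(\epsilon)/8$. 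For large $T$ the integrand on the ball is then at least $e^{-T\kappa(\epsilon)/4}$. Continuity of the prior and $p(\phi_0)>0$ give $\int_{B_r}p(\phi)\,d\phi=:c_r>0$. So the denominator is at least $c_r e^{-T\kappa(\epsilon)/4}$.

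\emph{Conclusion.} Together these give $P(A\sube|X_\fullT)\leq c_r^{-1}e^{-T\kappa(\epsilon)/8}\to0$ on the probability-one event, which proves almost-sure convergence.

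\emph{Main obstacle.} The delicate step is making the almost-sure statements simultaneous and uniform. Assumption \ref{ass:persist} is only a liminf over an infimum holding a.s., and it must be intersected with the event from Lemma \ref{lm:uniform_converge}. The threshold $T$ may depend on the realization, which is fine for a.s. convergence. I would also make sure the liminf assumption delivers the bound $\geq\kappa/2$ for all large $T$ uniformly over $A\sube$, which it does because the infimum sits inside the liminf. The constants in the upper and lower bounds must leave a positive gap; the choices $\kappa/8$ and $r$ above are picked to guarantee this.
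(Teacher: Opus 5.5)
Your proposal is correct. The numerator bound is the same as the paper's; the paper takes the $G_T$ slack as $\kappa(\epsilon)/2$ and gets $e^{-T\kappa(\epsilon)/2}$. The denominator is handled by a genuinely different device.

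\emph{The paper's denominator.} It uses a shrinking ball $B_T$ of radius $1/\sqrt{T}$ and the pathwise Lipschitz bound $|l_T(\phi,X_{1:T})-l_T(\phi_0,X_{1:T})|\le C_L T\Vert\phi-\phi_0\Vert$. It adds continuity of the prior ($p\ge p(\phi_0)/2$ on $B_T$) and the explicit volume of $B_T$. This gives a deterministic, $\epsilon$-independent lower bound of order $p(\phi_0)T^{-3/2}e^{-C_L\sqrt{T}}$, which decays only subexponentially. The result is the explicit bound $\frac{3T^{3/2}}{2\pi p(\phi_0)}e^{C_L\sqrt{T}-\kappa(\epsilon)T/2}$.

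\emph{Your denominator.} You take a fixed, $\epsilon$-dependent ball on which the average KL is at most $\kappa(\epsilon)/8$, and invoke the uniform convergence of $G_T$ a second time. Your denominator therefore decays exponentially too, only more slowly than the numerator. This is the classical Schwartz-type argument: positive prior mass on a KL neighbourhood of $\phi_0$.

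\emph{Trade-offs.} Your route avoids the ball-volume computation and the density bound on shrinking balls. It needs only that every neighbourhood of $\phi_0$ carry positive prior mass, which is weaker than continuity with $p(\phi_0)>0$. The cost is that the constants $r$ and $c_r$ depend on $\epsilon$, and the rate is less explicit. You could also drop $G_T$ from the denominator entirely: applying the pathwise Lipschitz bound on $B_r$ gives the integrand $\ge e^{-C_L rT}\ge e^{-T\kappa(\epsilon)/8}$ directly.

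Finally, passing from the liminf in Assumption~\ref{ass:persist} to the bound $\kappa(\epsilon)/2$ for all large $T$ is slightly more careful than the paper. The paper applies $-\kappa(\epsilon)$ itself at finite $T$, which the liminf only guarantees up to an arbitrarily small slack.
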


\textit{Proof for Lemma \ref{lm:posterior_consistent}.}
For any $\epsilon>0$, the numerator in Equation (\ref{eq:posterior}) satisfies:
\begin{align}
    \sup_{\phi\in A\sube} \frac{1}{T}(l_T(\phi,X_\fullT)- &l_T(\phi_0,X_\fullT)) = \sup_{\phi\in A\sube} \left( -\frac{1}{T} \sum_{t=1}^T \KL(p_t(\phi_0)\Vert p_t(\phi)) +\frac{1}{T} \sum_{t=1}^TD_t(\phi,X_t)\right) \\
    &\leq \underbrace{\sup_{\phi\in A\sube} \left( -\frac{1}{T} \sum_{t=1}^T \KL(p_t(\phi_0)\Vert p_t(\phi))\right)}_{\leq -\kappa(\epsilon) \ \text{by Assumption \ref{ass:persist}}} + \underbrace{\sup_{\phi\in A\sube}\left(\frac{1}{T} \sum_{t=1}^TD_t(\phi,X_t)\right) }_{\leq \kappa(\epsilon)/2 \ \text{almost surely for sufficiently large $T$ by Lemma \ref{lm:uniform_converge}}} \\
    &\leq -\frac{\kappa(\epsilon)}{2} \quad \mathrm{a.s.} \quad \text{for sufficiently large } T
\end{align}
Hence, for sufficiently large $T$
\begin{equation}
    \sup_{\phi\in A\sube} (l_T(\phi, X_\fullT)-l_T(\phi_0,X_\fullT)) \leq -\frac{\kappa(\epsilon)}{2}T \quad \mathrm{a.s.}
\end{equation}

Consequently, the numerator satisfies
\begin{equation}
    \int_{A_\epsilon} e^{l_T(\phi, X_\fullT)-l_T(\phi_0, X_\fullT)}p(\phi)d\phi \leq \int_{A_\epsilon} e^{-T\kappa(\epsilon)/2}p(\phi)d\phi \leq e^{-T\kappa(\epsilon)/2} \quad \text{a.s.}
\end{equation}

For the denominator, as we have already shown $\vert Z_t(\phi,X_t)-Z_t(\phi',X_t) \vert \leq C_L \Vert\phi-\phi'\Vert, \ \forall \phi,\phi'\in\Theta$ in Lemma \ref{lm:lipschitz},
\begin{equation}
    |\log p_t(\phi,X_t) - \log p_t(\phi_0,X_t) | \leq C_L\Vert \phi-\phi_0\Vert.
\end{equation}
Thus,
\begin{equation}
    |l_T(\phi,X_\fullT)-l_T(\phi_0,X_\fullT)| \leq \sum_{t=1}^T |\log p_t(\phi,X_t) - \log p_t(\phi_0,X_t)| \leq C_LT\Vert\phi-\phi_0\Vert .
\end{equation}

Define a small ball $B_T:=\left\{\phi\in\Theta: \Vert\phi-\phi_0\Vert \leq 1/\sqrt{T}\right\}$, whose size decreases with $T$. Thus, it is easy to see that for sufficiently large $T$, $p(\phi)\geq p(\phi_0)/2 $ for all $\phi\in B_T$ due to the continuity of the prior.

For any $\phi\in B_T$:
\begin{equation}
    l_T(\phi,X_\fullT)-l_T(\phi_0,X_\fullT) \geq -C_LT \frac{1}{\sqrt{T}} = -C_L\sqrt{T},
\end{equation}
which indicates
\begin{equation}
    \inf_{\phi\in B_T} (l_T(\phi, X_\fullT)-l_T(\phi_0, X_\fullT)) \geq -C_L\sqrt{T}.
\end{equation}

As a result, the denominator satisfies:
\begin{align}
    \int_{\Theta} e^{l_T(\phi,X_\fullT)-l_T(\phi_0,X_\fullT)} p(\phi) d\phi &\geq \int_{B_T} e^{l_T(\phi,X_\fullT)-l_T(\phi_0,X_\fullT)} p(\phi) d\phi \\
    &\geq \frac{1}{2}\int_{B_T} e^{-C_L\sqrt{T}} p(\phi_0) d\phi \\
    &=  \frac{e^{-C_L\sqrt{T}} p(\phi_0)}{2} \frac{4\pi}{3T^{3/2}}.
\end{align}

Finally, 
\begin{equation}
    P(A\sube|X_\fullT) \leq \frac{3T^{3/2}}{2\pi p(\phi_0)}  e^{C_L\sqrt{T}-\frac{\kappa(\epsilon)}{2}T}  \asto 0  \quad \text{as } T\to\infty. \quad \blacksquare
\end{equation}

Therefore, \begin{equation}
    \Vert\hat{\phi}_T-\phi_0\Vert\leq \scrE[\Vert\phi-\phi_0\Vert \vert X_\fullT] \leq \epsilon + diam(\Theta) P(A_\epsilon |X_\fullT).
\end{equation}

Let $T\to\infty$, and then let $\epsilon\to 0$, we have $\Vert \hat{\phi}_T-\phi_0\Vert \asto 0$, which proves the estimator consistency $\square$.

\subsection*{Proof for Proposition \ref{pp:endogV-nonidentify}.}
As discussed in Lemma \ref{lm:one-for-all}, we only need to consider a model with $N=1$. Again, we omit the superscript and denote the two value functions as $V_t$ and $V_t'$. 

Since two parameter sets share the same $\rho$, and the MNL model is shift-invaraint, as long as
\begin{equation}
V_t(i)-V_t(1) = V_t'(i)-V_t'(1), \ \forall i=2,3,...,M, t\in[T]
\end{equation}
$(\eta,\theta,\rho,V_1)$ and $(\eta,\theta,\rho,V_1')$ will always generate the same choice probability, hence are not identifiable. We now prove this by induction on $t$. 

First, it automatically holds for $t=1$. Suppose it also holds for $t=k$. Then, the exponential smoothing indicates
\begin{equation}
    V_{k+1}'(i) - V_{k+1}'(1)=(1-\eta) (V_k'(i)-V_k'(1))+\eta (c_k(i) - c_k(1)),
\end{equation}
and
\begin{equation}
    V_{k+1}(i) - V_{k+1}(1)=(1-\eta) (V_k(i)-V_k(1))+\eta (c_k(i) - c_k(1)),
\end{equation}
for all $i=2,3,...,M, t\in[T]$. Therefore, the equation also holds for $t=k+1$. By induction, we prove the proposition. $\square$

\subsection*{Proof for Theorem \ref{thm:endogenize}.}
Similar to our proof for Theorem \ref{thm:basic}, it suffices to consider the case $N=1$, and $\rho$ is identifiable.

Suppose the model is not identifiable. That is, two parameter sets $(\eta, \theta, \rho, \delta ) \neq(\eta',\theta',\rho, \delta ')$ have the same choice probabilities, where $\delta$ and $\delta'$ represent the vector form of the initial value parameters. Applying Equation (\ref{eq:master}) at time $t$ and $t+1$, together with the updating equation, yields
\begin{equation}
\label{eq:endog-help1}
    (\eta'-\eta)\theta\Delta V_t(\cdot,1) + (\theta\eta-\theta'\eta')\Delta c_t(\cdot,1) = 0, \qquad t=1,\ldots,T-1.
\end{equation}

Suppose $\eta\neq\eta'$, and define \begin{equation} 
q= \frac{\theta'\eta'-\theta\eta}{\eta'-\eta}. 
\end{equation} 
Equation~(\ref{eq:endog-help1}) implies 
\begin{equation} 
\label{eq:endog-help2} \theta\Delta V_t(\cdot,1) = q\Delta c_t(\cdot,1), \qquad t=1,\ldots,T-1. 
\end{equation}

Suppose $q=0$. For the day $s$ specified in the first condition of Assumption~\ref{ass:richer}, Equation~(\ref{eq:endog-help2}) gives \begin{equation} \theta\Delta V_s(\cdot,1) = \theta\Delta V_{s+1}(\cdot,1) = 0, \end{equation} where $s+1\leq T-1$. The updating equation then implies 
\begin{align} 
\theta\Delta V_{s+1}(\cdot,1)= (1-\eta)\theta\Delta V_s(\cdot,1) + \theta\eta\Delta c_s(\cdot,1)= \theta\eta\Delta c_s(\cdot,1)=0, \end{align} which contradicts $\theta>0$, $\eta>0$, and $\Delta c_s(\cdot,1)\neq0$. 

Therefore, $q\neq0$. Substituting Equation~(\ref{eq:endog-help2}) into the updating equation gives \begin{equation} 
q\Delta c_{t+1}(\cdot,1) = \left[(1-\eta)q+\theta\eta\right] \Delta c_t(\cdot,1), \qquad t=1,\ldots,T-2. 
\end{equation}
Since $q\neq0$, it follows that \begin{equation} 
\Delta c_{t+1}(\cdot,1) = \left( 1-\eta+\frac{\theta\eta}{q} \right) \Delta c_t(\cdot,1), \qquad t=1,\ldots,T-2. 
\end{equation} 
Thus, the scalar $r=1-\eta+\theta\eta/q$ satisfies $\Delta c_{t+1}(\cdot,1)=r\Delta c_t(\cdot,1)$ for every $t=1,\ldots,T-2$, contradicting the second condition in Assumption~\ref{ass:richer}. 

Therefore, $\eta=\eta'$, then
\begin{equation} \eta(\theta-\theta')\Delta c_t(\cdot,1)=0, \qquad t=1,\ldots,T-1. \end{equation}
By the first condition in Assumption~\ref{ass:richer}, there exists $s\in\{1,\ldots,T-2\}$ such that $\Delta c_s(\cdot,1)\neq0$. Since $\eta>0$, it follows that $\theta=\theta'$. Applying Equation~(\ref{eq:master}) at $t=1$ then gives $\theta\delta=\theta'\delta'$ and hence $\delta=\delta'$, which contradicts the starting assumption. Therefore, the model is identifiable. $\square$

\subsection*{Proof for Proposition \ref{pp:partial-pool-identify}.}
We omit the proof as it can be proved following a similar way as in Proposition \ref{pp:partial-pool-endogV-identify}.

\subsection*{Proof for Proposition \ref{pp:partial-pool-endogV-identify}.}
In the pooled model, all commuters share the choice probability. Denote the choice probability generated by parameter $\eta,\theta,\rho,\delta$ as $p_t(i|\eta,\theta,\rho,\delta,\scrF_\tmo)$.

We first present the following lemma. As the observation likelihood is given by a multinomial distribution, this lemma can be proved by Corollary 6.16 in \cite{lehmann1998theory}.
\begin{lemma}
\label{lm:lehmann}
If $P(O_t=o_t|\eta, \theta,\rho, \delta,\scrF_\tmo) = P(O_t=o_t|\eta', \theta',\rho', \delta',\scrF_\tmo)$ for all observation $o_t$ and $t\in[T]$, there must be $p_t(i|\eta, \theta,\rho, \delta,\scrF_\tmo)=p_t(i|\eta', \theta',\rho', \delta',\scrF_\tmo)$ for all $i=0,...,M$ and $t\in[T]$.
\end{lemma}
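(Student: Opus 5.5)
The claim is that equality of the multinomial count laws at each day forces equality of the underlying cell probabilities. Since the pooled model conditions every day on the same history $\scrF_\tmo$ (the cost history), $O_t$ given $\scrF_\tmo$ is $\text{Multinomial}(N,p_t)$ with $p_t=(p_t(0),\ldots,p_t(M))$ under one parameter set and $\text{Multinomial}(N,p_t')$ under the other. So the lemma reduces to a purely static fact: the map $p\mapsto \text{Multinomial}(N,p)$ is injective on the probability simplex for every fixed $N\geq 1$. I would prove this directly rather than only citing \cite{lehmann1998theory}, so the argument is self-contained.

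The key step is to evaluate the hypothesis at the extreme count vectors. Take $o_t=N e_i$, i.e.\ all $N$ commuters choose alternative $i$. The multinomial coefficient equals $1$, so the hypothesis gives
\begin{equation*}
p_t(i|\eta,\theta,\rho,\delta,\scrF_\tmo)^N = p_t(i|\eta',\theta',\rho',\delta',\scrF_\tmo)^N .
\end{equation*}
Both sides are nonnegative reals and $x\mapsto x^N$ is strictly increasing on $[0,\infty)$, so taking $N$-th roots gives equality of the two cell probabilities. Repeating this for each $i=0,\ldots,M$ and each $t\in[T]$ yields the conclusion. An alternative, equally short route is to match the means: $\scrE[O_t]=Np_t$ under each parameter set, and equal distributions have equal means.

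There is no real obstacle here. The only point requiring care is the conditioning. The hypothesis is stated conditional on $\scrF_\tmo$, and in this model $\scrF_\tmo$ consists only of the exogenous costs, which are common to both parameter sets. Both probability vectors $p_t$ and $p_t'$ are therefore deterministic functions of the parameters and $c_{1:t-1}$, and the day-by-day comparison is legitimate. This is the step where I would be explicit, noting that the valuations $V_t$ are deterministic given the parameters and costs. Once the lemma is in hand, Proposition~\ref{pp:partial-pool-endogV-identify} follows by applying the argument of Theorem~\ref{thm:endogenize} to the equal choice probabilities.
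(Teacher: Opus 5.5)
Your proof is correct, and it takes a more elementary, self-contained route than the paper. The paper gives no argument of its own for this lemma. It notes that $O_t$ is multinomial and appeals to Corollary 6.16 of \cite{lehmann1998theory}, that is, to general distribution theory for the multinomial family. You instead prove the needed injectivity of $p\mapsto\text{Multinomial}(N,p)$ directly. Evaluating the hypothesis at $o_t=Ne_i$ gives $p_t(i)^N=p_t'(i)^N$, and hence $p_t(i)=p_t'(i)$; alternatively, equal laws have equal means, so $Np_t=Np_t'$. Both versions are valid for every $N\geq 1$.

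The citation places the lemma within a general framework that would also cover other parametric observation models. Your version buys three things:
\begin{enumerate}
\item It needs no outside reference.
\item It works on the whole closed simplex, without relying on the interior-of-the-simplex setting in which exponential-family results are usually formulated. That setting is harmless here anyway, because $\rho\in(0,1)$ and logit probabilities are strictly positive.
\item It justifies explicitly the conditioning on $\scrF_\tmo$, which the paper leaves implicit. Because the valuations are driven only by the exogenous costs, $p_t$ and $p_t'$ are deterministic functions of the parameters and $c_{1:t-1}$, so both parameter sets are compared on the same conditioning event day by day.
\end{enumerate}
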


Now, suppose the model is not identifiable, meaning that there exist two different parameter sets $(\eta, \theta,\rho, \delta)\neq (\eta', \theta',\rho', \delta')$, such that $P(O_t=o_t|\eta, \theta,\rho, \delta,\scrF_\tmo) = P(O_t=o_t|\eta', \theta',\rho', \delta',\scrF_\tmo)$ for all observation $o_t$ and $t\in[T]$. Lemma \ref{lm:lehmann} ensures that these parameter sets also ensure $p_t(i|\eta, \theta,\rho, \delta,\scrF_\tmo)=p_t(i|\eta', \theta',\rho', \delta',\scrF_\tmo)$ for all $i=0,...,M$ and $t\in[T]$, which contradicts the identifiability in Theorem \ref{thm:endogenize}. Therefore, the assumption does not hold, and the model is identifiable. $\square$

\subsection*{Proof for Theorem \ref{thm:equal}.}
The posterior of the anonymized observation model satisfies
\begin{equation}
    p(\eta,\theta,\rho| o_\fullT, c_\fullT) \propto  P(O_\fullT=o_\fullT|\eta,\theta,\rho, c_\fullT) p(\eta)p(\theta)p(\rho),
\end{equation}
where the likelihood function equals: 
\begin{equation}
\begin{split}
    P(O_\fullT=o_\fullT|\eta,\theta,\rho, c_\fullT) &=  \prod_{t=1}^T P(O_t=o_t|\eta,\theta,\rho, \scrF_\tmo), \\
    &= \prod_{t=1}^T \frac{N!}{o_t(0)!\cdots o_t(M)!} \prod_{i=0}^M p_t(i)^{o_t(i)}.
\end{split}
\end{equation}

Thus, the log-posterior satisfies:
\begin{equation}
    \log p(\eta,\theta,\rho| o_\fullT, c_\fullT)=\sum_{t=1}^T\sum_{i=0}^M o_t(i)\log p_t(i) +\log p(\eta,\theta,\rho) \underbrace{-\sum_{t=1}^T\sum_{i=0}^M \log o_t(i)! +T\log N}_{\text{Given } o_t, \text{ it is a constant}}+\text{const}
\end{equation}
    
Meanwhile, for the fully observable model, we denote the choice probability as $p_t^n(i)=p_t(i)$ for all $n\in[N]$. We have
\begin{align}
    p(\eta,\theta,\rho|x_\fullT, c_\fullT) &\propto P(X_\fullT=x_\fullT|\eta,\theta,\rho, c_\fullT) p(\eta)p(\theta)p(\rho),
\end{align}
and further
\begin{equation}
    P(X_\fullT=x_\fullT|\eta,\theta,\rho, c_\fullT) = \prod_{t=1}^T \prod_{n=1}^N P(X^n_t=x^n_t|\eta,\theta,\rho, \scrF_\tmo)=  \prod_{t=1}^T \prod_{n=1}^N p_t(x_t^n)=  \prod_{t=1}^T \prod_{i=0}^M p_t(i)^{o_t(i)},
\end{equation}
where the final equality holds because in $\prod_{n=1}^N p_t(x_t^n)$, there are $o_t(i)$ number of $p_t(i)$, for $i=0,...,M$. 

Therefore, the log-posterior satisfies:
\begin{equation}
    \log p(\eta,\theta,\rho| x_\fullT, c_\fullT)=\sum_{t=1}^T\sum_{i=0}^M o_t(i)\log p_t(i) +\log p(\eta,\theta,\rho) + \text{const},
\end{equation}
which only has a constant shift relative to the anonymized case. As the posteriors need to sum up to one, they must be identical. 
$\square$

\subsection*{Proof for Proposition \ref{pp:approximate}.}
For expectations:
\begin{equation}
    \scrE[\otild_t] = N \Bar{p}_t = \sum_{n=1}^N p_t^n=\scrE[o_t], \ \forall t\in[T].
\end{equation}

For variance:
\begin{equation}
    \text{Cov}(\otild_t) = N\left(\text{diag}(\Bar{p}_t) - \Bar{p}_t \Bar{p}_t^T \right)
\end{equation}

We have
\begin{align}
    \text{Cov}(\otild_t) - \text{Cov}(o_t) &= N\left(\text{diag}(\Bar{p}_t) - \Bar{p}_t \Bar{p}_t^T \right) - \sum_{n=1}^N \left( \text{diag}(p_t^n) - p_t^n (p_t^n)^T \right) \\
    &= \sum_{n=1}^N p_t^n (p_t^n)^T - N\Bar{p}_t \Bar{p}_t^T,
\end{align}
where the diagonal terms cancel out because $\text{diag}$ is a linear operator. 

Let $\delta_t^n=p_t^n-\Bar{p}_t$. Then, $\sum_{n=1}^N \delta_t^n=0$. Expand:
\begin{align}
    \sum_{n=1}^N p_t^n (p_t^n)^T &= \sum_{n=1}^N (\Bar{p}_t+\delta_t^n) (\Bar{p}_t+\delta_t^n)^T = N \Bar{p}_t  \Bar{p}_t ^T + \sum_{n=1}^N \delta _t^n (\delta _t^n)^T.
\end{align}

As each matrix $\delta _t^n (\delta _t^n)^T$ is positive semi-definite (PSD), the sum of PSD matrices is also PSD, which proves that $\text{Cov}(\otild_t) \succeq \text{Cov}(o_t)$. $\square$

\section*{Appendix B. Additional Results}
\subsection{Bayesian Diagnostics}
\label{app:ss:diagnostics}
Here we present the sampling diagnostics for the pooled model with $N=3,T=30$. 

The first metric is the split $\hat{R}$, which assesses convergence by comparing within-chain and between-chain variance. Figure \ref{fig:rhat_histograms} shows that all the empirical distribution concentrates at 1.00, indicating satisfactory convergence. 

\begin{figure}[!ht]
    \centering
    \includegraphics[width=0.6\linewidth]{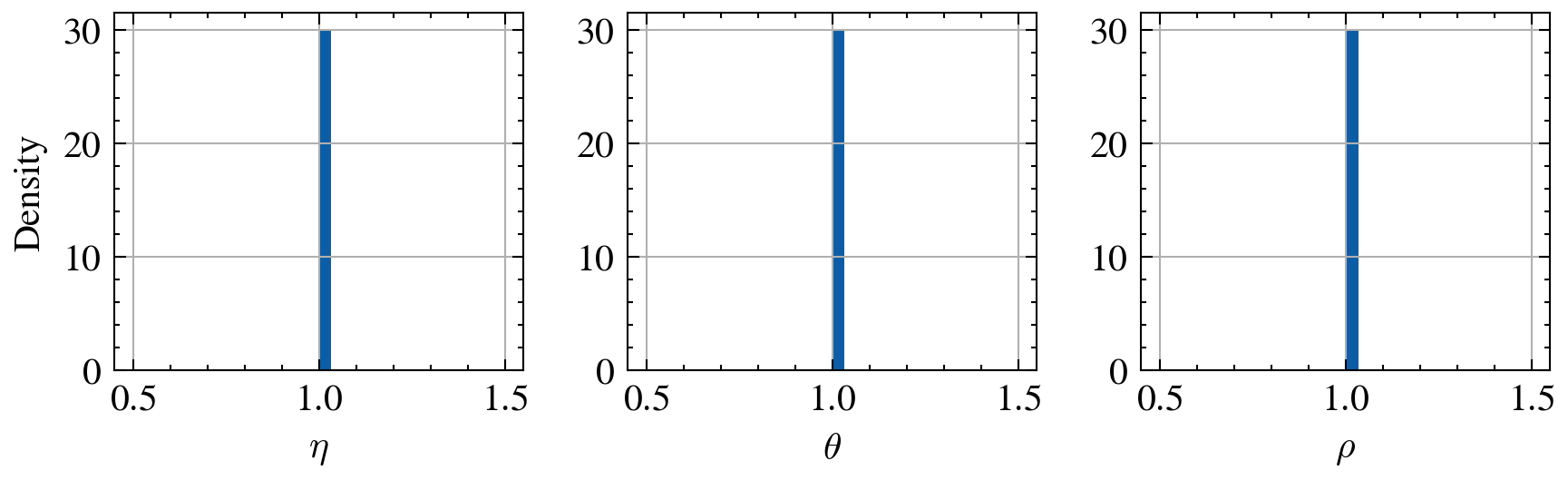}
    \caption{Histogram of $\hat{R}$.}
    \label{fig:rhat_histograms}
\end{figure}

Figure \ref{fig:ess_histograms} presents the distribution of effective sample size (ESS), which measures sampling efficiency by accounting for autocorrelation among posterior draws. For all three parameters, ESS values are predominantly above 2,500, indicating efficient exploration of the posterior distribution.

\begin{figure}[!ht]
    \centering
    \includegraphics[width=0.6\linewidth]{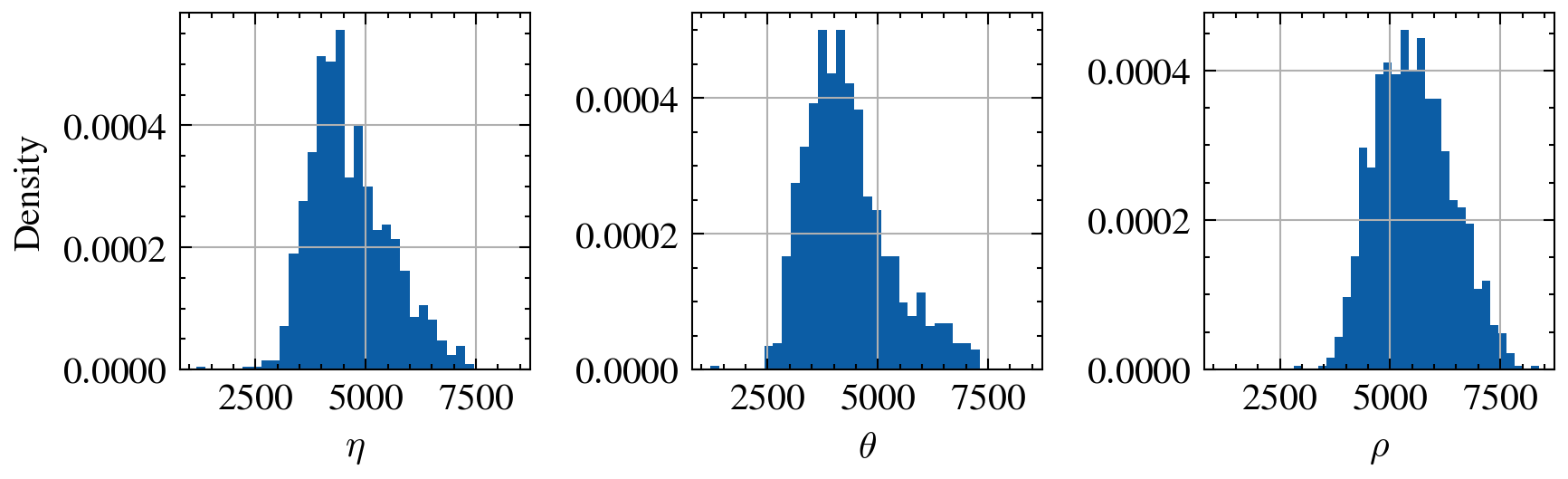}
    \caption{Histogram of ESS.}
    \label{fig:ess_histograms}
\end{figure}

Figure \ref{fig:rank_histograms} presents the empirical distribution of the normalized rank of the true parameter value within the posterior draws across simulation replications. The rank histograms are approximately uniform for all parameters, suggesting that the posterior distributions are well calibrated across repeated simulations and exhibit no systematic tendency to place the true values disproportionately near either tail.

\begin{figure}[!ht]
    \centering
    \includegraphics[width=0.6\linewidth]{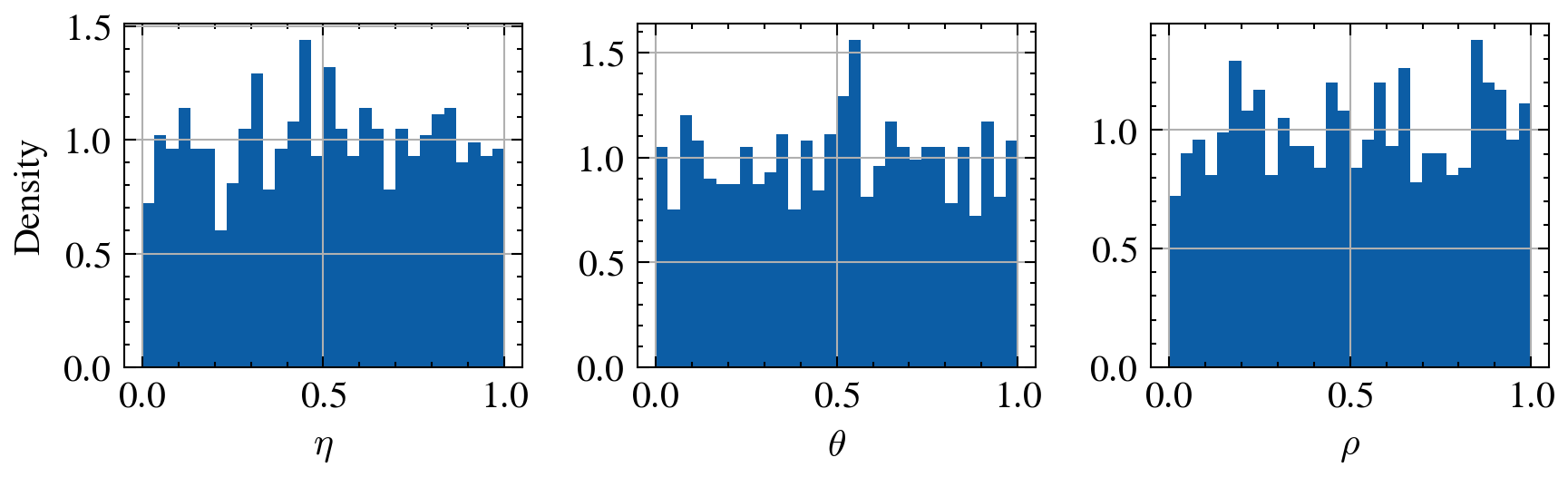}
    \caption{Histogram of rank.}
    \label{fig:rank_histograms}
\end{figure}

\subsection{Robustness to Misspecification}\label{app:ss:robust}
\subsubsection{Behavioral model misspecification}
Besides the correctly specified Horowitz model, we consider an alternative data-generating process based on the Smith model \citep{smith1984stability}: starting from a uniform random choice over paths, on day $t$, a traveler who previously chose path $i$ switches to another path $j\neq i$  with probability:
\begin{equation}
P(X_t^n=j|X_\tmo^n=i, \tau, \epsilon, \scrF_\tmo) 
=\begin{cases}
        \tau (c_\tmo(i)-c_\tmo(j)), \ & \text{if }  c_\tmo(j)\leq c_\tmo(i),\\
        \epsilon, \ & \text{if } c_\tmo(j)>c_\tmo(i),
    \end{cases}
\end{equation}
and thus
\begin{equation}
    p(X_t^n=i|X_\tmo^n=i, \tau, \epsilon, \scrF_\tmo)= 1-\sum_{j\neq i, j\in[M]} P(X_t^n=j|X_\tmo^n=i, \tau, \epsilon, \scrF_\tmo).  
\end{equation}
Intuitively, commuters on a high-cost option will switch to lower-cost options at a rate that is proportional to the cost difference. 

For clarity, we assume no demand variation (i.e., $\rho=0$) and estimate the other two more challenging parameters. For each data-generating process, we consider three levels of behavioral responsiveness. The Smith parameters $(\tau,\epsilon)$ are set to $(0.05,0.10)$, $(0.10,0.05)$, and $(0.12,0.01)$, while the corresponding Horowitz parameters $(\eta,\theta)$ are set to $(0.10,0.5)$, $(0.20,1.0)$, and $(0.40,2.0)$. We additionally vary the population size between $N=10$ and $N=20$ and the training horizon between 30 and 80 days. Each fitted model is evaluated over the immediately following $T_{test}=20$ held-out days, and each configuration is repeated three times. Predictive performance is measured using the root mean squared error (RMSE) between the choice probabilities predicted by the estimated Horowitz model and the exact probability trajectories generated by the corresponding data-generating process, averaged across the three routes and 20 held-out days.

Figure \ref{fig:robust_posterior_smith} shows posterior samples under two representative settings. Relative to the correctly specified case in Figure \ref{fig:posterior}, the posterior distributions under behavioral misspecification are more irregular, reflecting the mismatch between the assumed model and the data-generating process. Nevertheless, increasing the amount of information through larger $N$ and longer $T_{train}$ yields more concentrated posterior distributions.

\begin{figure}[!ht]
    \centering
    \includegraphics[width=0.5\linewidth]{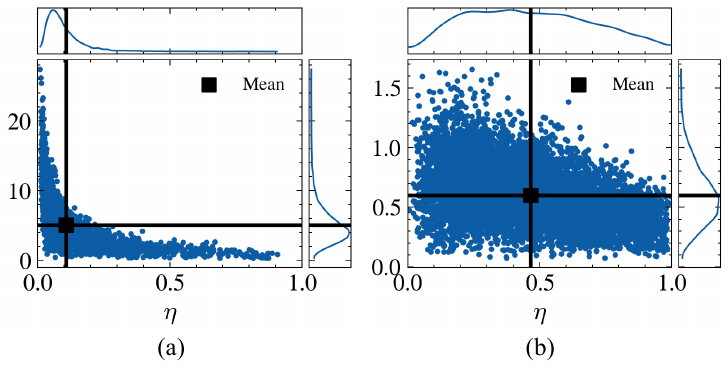}
    \caption{Sampled posteriors. (a): $N=10, T_{train}=30$; (b): $N=20, T_{train}=80$.}
    \label{fig:robust_posterior_smith}
\end{figure}

\subsubsection{Shift in parameter generation}
In practice, the modeler rarely knows the true distribution from which behavioral parameters are generated. To examine sensitivity to prior misspecification, we generate data under two alternative parameter distributions that differ from the estimation priors:
\begin{itemize}
\item Same distribution families but different parameters:

$\log \left(\frac{\eta^{(s)}}{1-\eta^{(s)}}\right) \sim N(-0.85, 0.5), \log \theta^{(s)}\sim N(1.1, 0.6), \log \left(\frac{\rho^{(s)}}{1-\rho^{(s)}}\right) \sim N(-1.5, 0.7);$
\item Different distribution families:

$
    \eta^{(s)}\sim Beta(2, 5), \theta^{(s)}\sim Gamma(2,1), \rho^{(s)}\sim Beta(2,8).
$
\end{itemize}

Figure \ref{fig:distribution_changes} visualizes these distributions. As expected, the shapes differ substantially from the estimation prior.
\begin{figure}[!ht]
    \centering
    \includegraphics[width=0.6\linewidth]{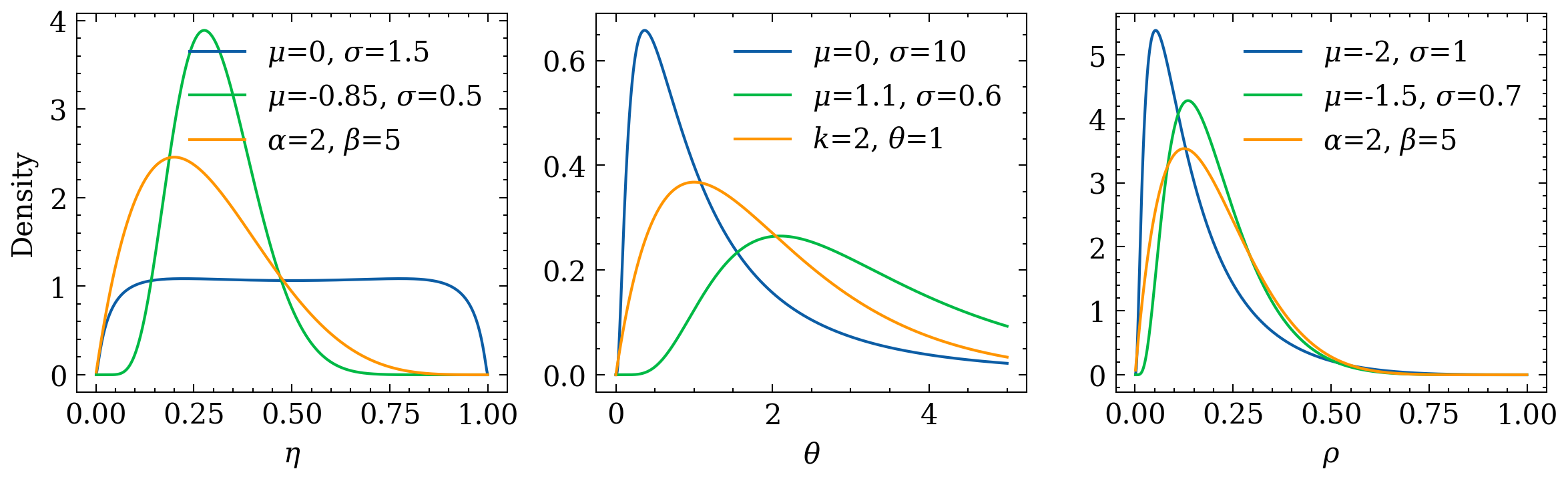}
    \caption{Different distributions: prior (green), same family but different parameters (green), different distribution family (orange)}
    \label{fig:distribution_changes}
\end{figure}

Figure \ref{fig:diffScenarios} summarizes estimation performance under these scenarios. Overall, misspecification degrades performance, increasing bias and reducing coverage, but the deterioration remains within an acceptable range. In contrast, the effect on interval width is comparatively small: interval width mainly reflects how much information can be extracted from a given dataset under the chosen model, and moderate prior misspecification has limited influence on that extraction efficiency. Among the parameters, $\rho$ is the least affected by the distribution shift, consistent with its relatively clean identifiability through the travel/no-travel mechanism.
\begin{figure}[!ht]
    \centering
    \includegraphics[width=1\linewidth]{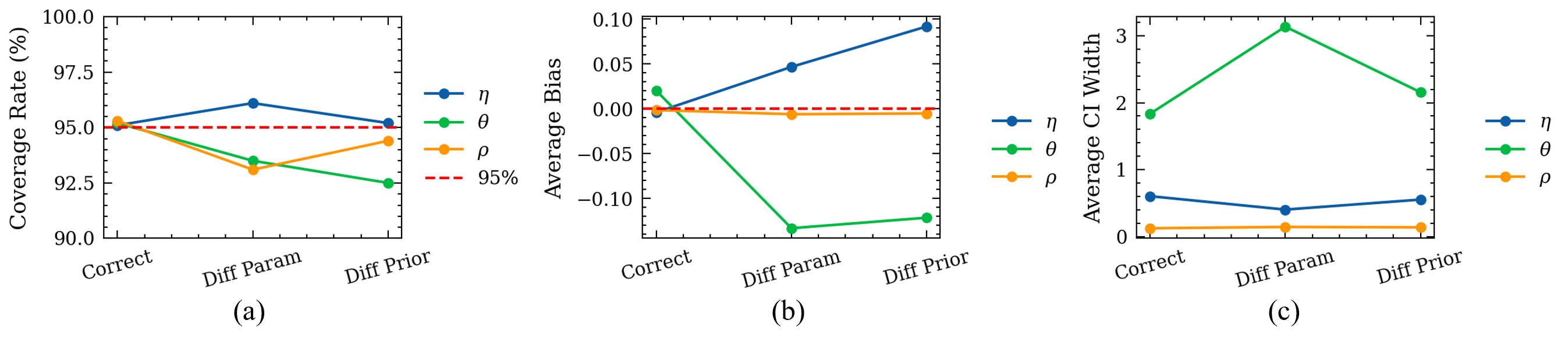}
    \caption{Estimation performance under different scenarios}
    \label{fig:diffScenarios}
\end{figure}

\subsubsection{Behavior heterogeneity}
We next examine the robustness of the pooled estimator when the data are generated by travelers with heterogeneous parameters. Again, we assume no demand variation. We set $N=10$, $T_{train}=30$, and predict choice probabilities over the next $T_{test}=20$ days. Each traveler $n$ independently draws individual parameters according to:
\begin{equation}
    \log\left( {\eta^n}/{(1-\eta^n)} \right) \sim \scrN(0, 1.5^2), \ \log(\theta^n) \sim \scrN(0,1^2)
\end{equation}

Figure \ref{fig:robust_posterior} shows the resulting posterior samples together with the individual-level true parameter values. The pooled posterior lies near the center of the individual parameter cloud, indicating that the pooled model recovers a central aggregate representation of heterogeneous behavior.
\begin{figure}[!ht]
    \centering
    \includegraphics[width=0.25\linewidth]{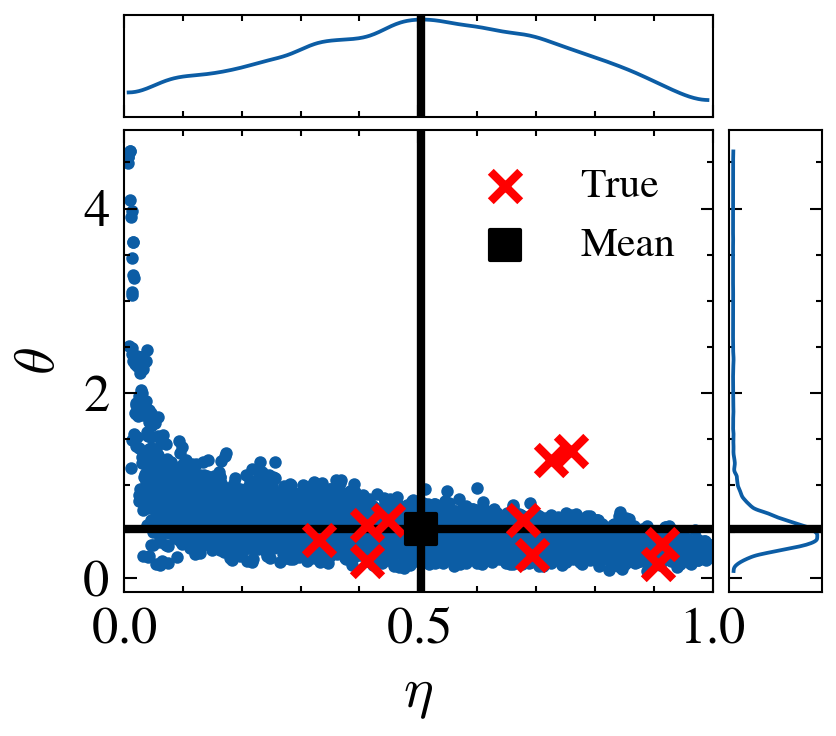}
    \caption{Sampled posteriors.}
    \label{fig:robust_posterior}
\end{figure}

Figure \ref{fig:robust_predict} reports extrapolation performance. Prediction is highly accurate, indicating that even when individual-level behavioral interpretation is limited by pooling, aggregate outcomes can still be predicted well from a pooled approximation. 
\begin{figure}[!ht]
    \centering
    \includegraphics[width=0.8\linewidth]{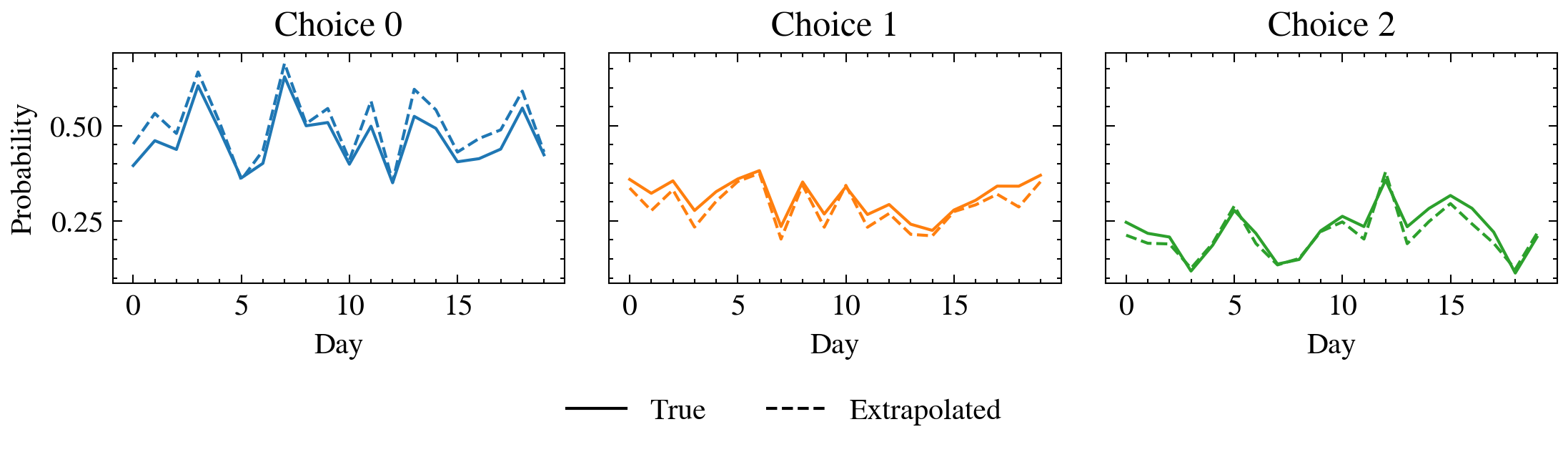}
    \caption{Extrapolated choice probabilities in $T_{test}=20$ days.}
    \label{fig:robust_predict}
\end{figure}


\subsection{Hierarchical Model Estimations}\label{app:ss:hier}
Figure \ref{fig:hier_coverage} reports empirical coverage for 95\% intervals and shows rates close to nominal. Some degradation appears for large $N$ or $T$, likely due to increased computational difficulty (e.g., divergences or poorer mixing), which can reduce effective sample size and slightly distort interval calibration.

\begin{figure}[!ht]
    \centering
    \includegraphics[width=0.85\linewidth]{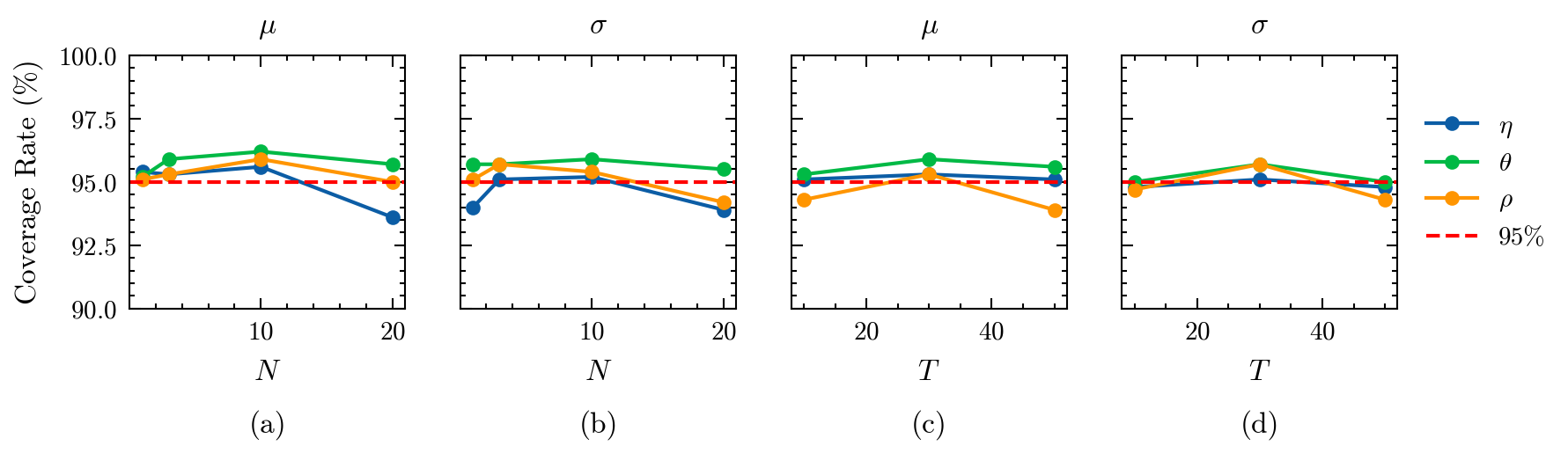}
    \caption{Coverage rate. (a) and (b): Fix $N=3$ and vary $T$; (c) and (d): Fix $T=30$ and vary $N$. }
    \label{fig:hier_coverage}
\end{figure}

Figures \ref{fig:hier_ci_width_N} and \ref{fig:hier_ci_width_T} report average widths of the 95\% CIs. Width decreases monotonically with $N$, reinforcing that population size is the main driver for learning heterogeneity. When $T$ increases from 30 to 50, widths for most hyperparameters stop decreasing, suggesting that roughly 30 days already contain sufficient temporal information in this synthetic setting and additional days yield diminishing returns.

\begin{figure}[!ht]
    \centering
    \includegraphics[width=1\linewidth]{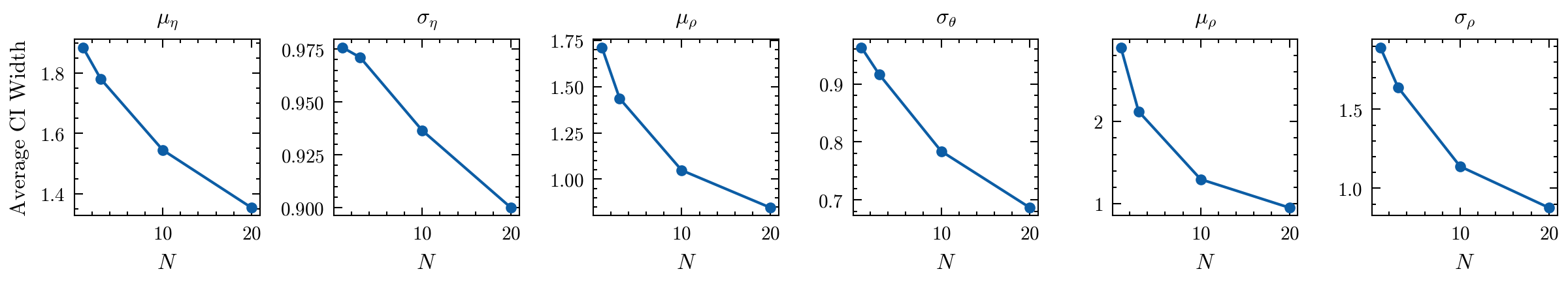}
    \caption{Average CI width of hyperparameters against varying $N$.}
    \label{fig:hier_ci_width_N}
\end{figure}

\begin{figure}[!ht]
    \centering
    \includegraphics[width=1\linewidth]{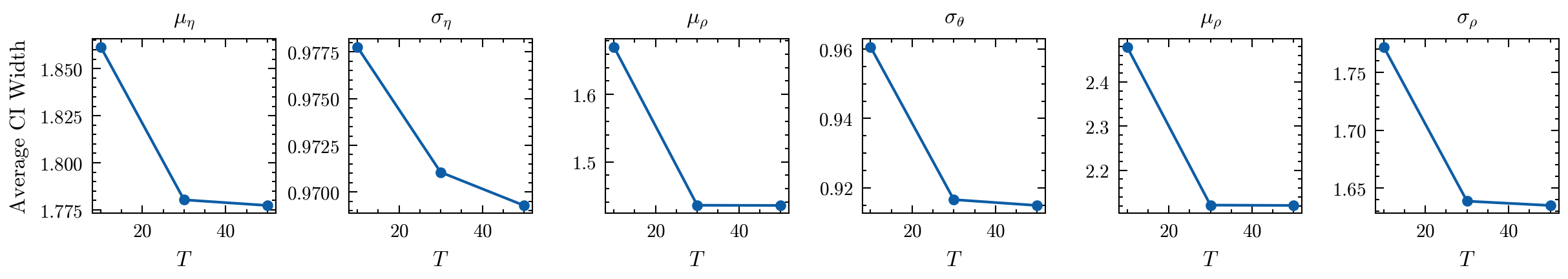}
    \caption{Average CI width of hyperparameters against varying $T$.}
    \label{fig:hier_ci_width_T}
\end{figure}

\subsection{Controlled Lab Experiments}\label{app:ss:lab}
Figure \ref{fig:wij_noinfo} presents the predictive performance of the model with fixed initial values on the no-information data. 

Figure \ref{fig:rep_freq_comparison} presents the posterior predictive performance for the four participant types. In general, the predictive bands cover the realized frequencies well. In contrast, Figure \ref{fig:fit_40day} shows the results for the 40-day coarse path-level data, where the predictive mean is almost stationary and fails to capture path-count variations.

\begin{figure}[!ht]
    \centering
    \includegraphics[width=0.7\linewidth]{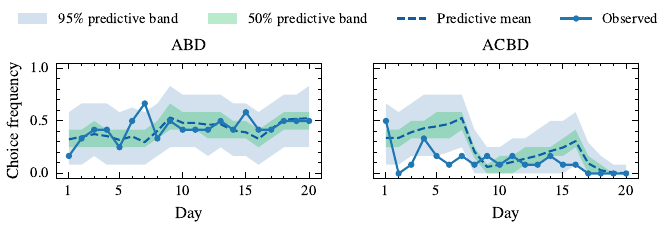}
    \caption{Posterior predictive performance for no information and fixed initial values}
    \label{fig:wij_noinfo}
\end{figure}

\begin{figure}[!ht]
    \centering
    \includegraphics[width=0.8\linewidth]{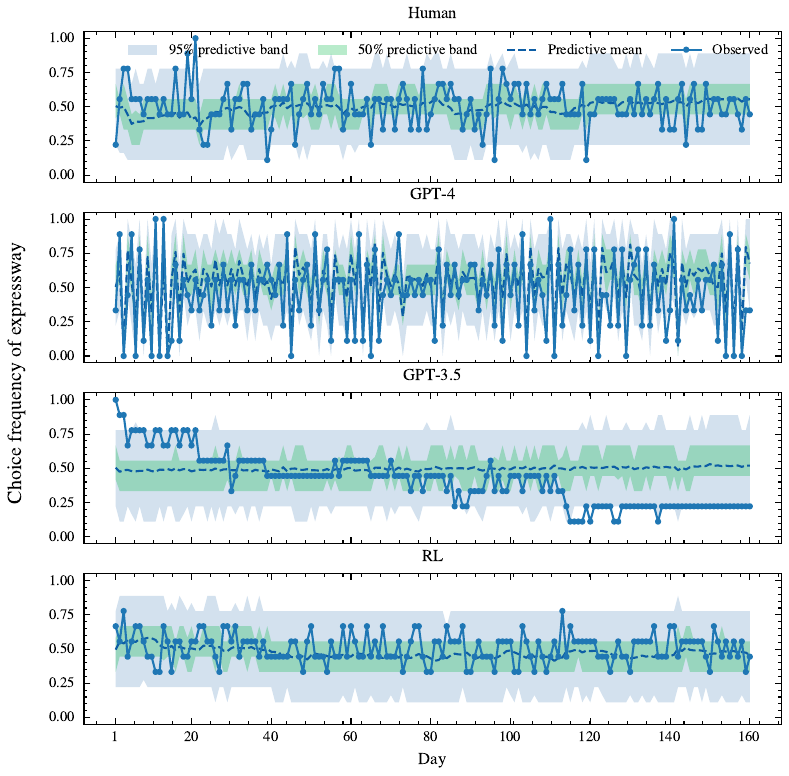}
    \caption{Posterior predictive performance for different participant types}
    \label{fig:rep_freq_comparison}
\end{figure}

\begin{figure}[!ht]
    \centering    \includegraphics[width=0.7\linewidth]{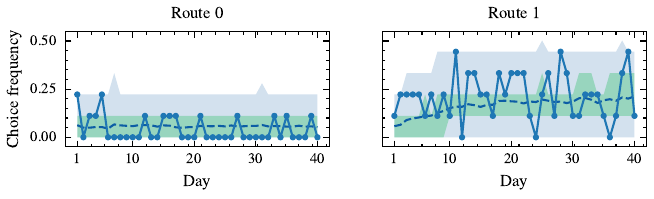}
    \caption{Posterior predictive performance for path-level human data}
    \label{fig:fit_40day}
\end{figure}

\subsection{Trajectory Data}\label{app:ss:traj}
Figure \ref{fig:two_od} gives the other two OD pairs in the experiment. Both involve two routes: one primarily uses highways and the other utilizes local roads.
\begin{figure}[!ht]
    \centering
    \includegraphics[width=0.55\linewidth]{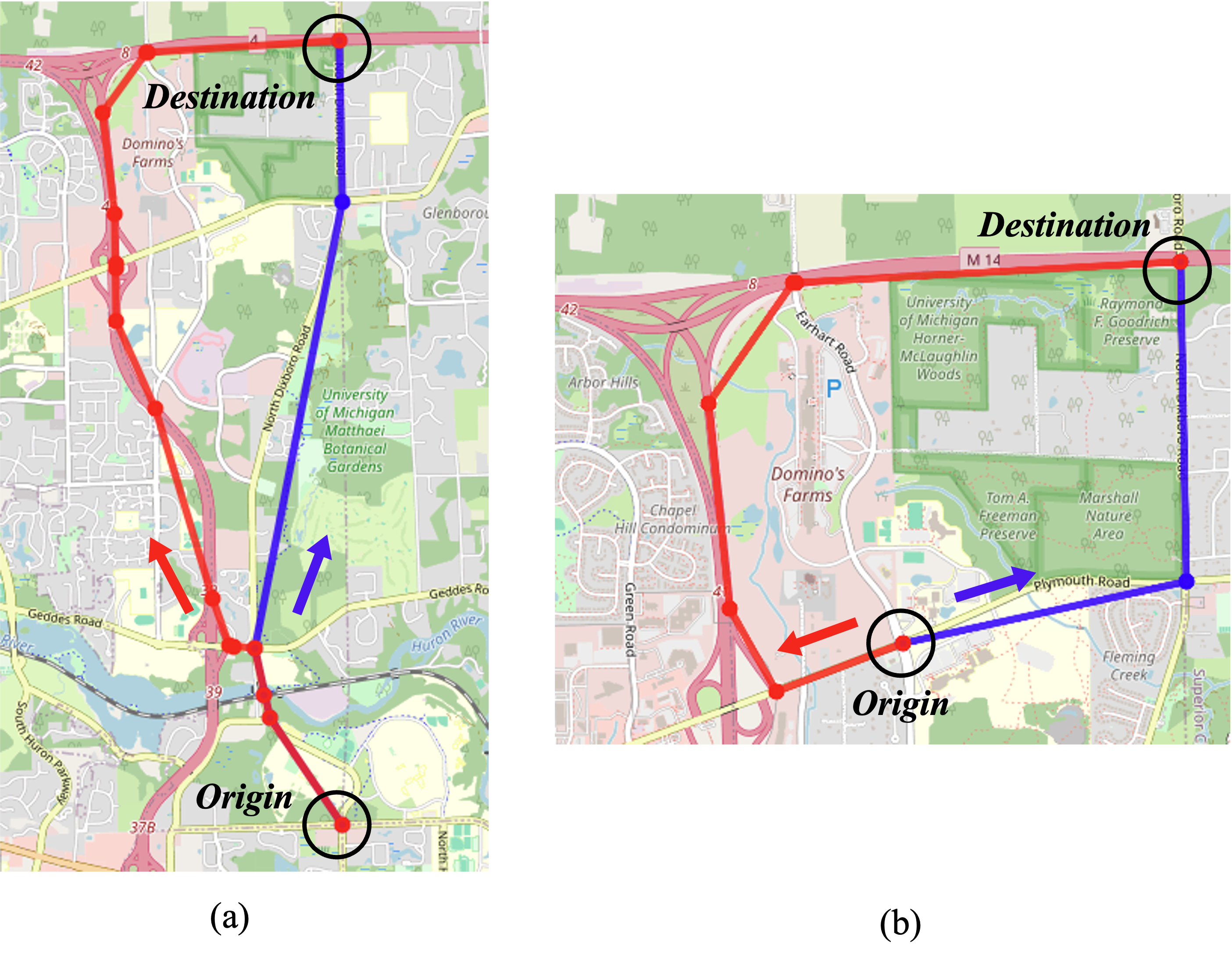}
    \caption{The other two OD pairs}
    \label{fig:two_od}
\end{figure}

\end{document}